\documentclass[12pt]{amsart}
\usepackage[utf8]{inputenc}

\usepackage{BAstyle}
\usepackage[margin = 1in]{geometry}
\allowdisplaybreaks

\makeatletter
\def\@tocline#1#2#3#4#5#6#7{\relax
  \ifnum #1>\c@tocdepth 
  \else
    \par \addpenalty\@secpenalty\addvspace{#2}%
    \begingroup \hyphenpenalty\@M
    \@ifempty{#4}{%
      \@tempdima\csname r@tocindent\number#1\endcsname\relax
    }{%
      \@tempdima#4\relax
    }%
    \parindent\z@ \leftskip#3\relax \advance\leftskip\@tempdima\relax
    \rightskip\@pnumwidth plus4em \parfillskip-\@pnumwidth
    #5\leavevmode\hskip-\@tempdima
      \ifcase #1
       \or\or \hskip 1em \or \hskip 2em \else \hskip 3em \fi%
      #6\nobreak\relax
    \hfill\hbox to\@pnumwidth{\@tocpagenum{#7}}\par
    \nobreak
    \endgroup
  \fi}
\makeatother

\usepackage[breakable,many]{tcolorbox}

\tcbset{
    sharp corners,
    colback = white,
    before skip = 0.2cm,    
    after skip = 0.5cm      
}                     

\newtcolorbox{exbox}{
    sharpish corners, 
    boxrule = 0pt,
    toprule = 1pt,
    leftrule = 4.5pt, 
    enhanced,
    breakable,
    before upper={\parindent15pt},
    fuzzy shadow = {0pt}{-2pt}{-0.5pt}{0.5pt}{black!35} 
}

\newtheorem{nonexample}[theorem]{Non-Example}
\newtheorem{remark}[theorem]{Remark}

\newenvironment{BOXexample}{
\begin{exbox}\mbox{}
\begin{example}
}
{
\end{example}
\end{exbox}
}

\theoremstyle{plain}

\title{Explicit Analytic Continuation of Euler Products}
\author{Brandon Alberts}
\address{Eastern Michigan University}
\email{balbert1@emich.edu}

\begin{document}

\begin{abstract}
The generating series of a number of different objects studied in arithmetic statistics can be built out of Euler products. Euler products often have very nice analytic properties, and by constructing a meromorphic continuation one can use complex analytic techniques, including Tauberian theorems to prove asymptotic counting theorems for these objects. One standard technique for producing a meromorphic continuation is to factor out copies of the Riemann zeta function, for which a meromorphic continuation is already known.

This paper is an exposition of the ``Factorization Method" for meromorphic continuation. We provide the following three resources with an eye towards research in arithmetic statistics: (1) an introduction to this technique targeted at new researchers, (2) exposition of existing works, with self-contained proofs, that give a continuation of Euler products with constant or Frobenain coefficients to the right halfplane ${\rm Re}(s)>0$ (away from an isolated set of singularities), and (3) explicit statements on the locations and orders of all singularities for these Euler products.
\end{abstract}

\maketitle
\vspace{-1.2cm}
\setcounter{tocdepth}{1}
\tableofcontents
\newpage
\section{Introduction}

The generating Dirichlet series of arithmetic objects are very often built from Euler products. An important step in understanding the distribution of such arithmetic objects is constructing meromorphic continuations of the relevant Euler products.

There are two well-worn approaches to this problem:
\begin{enumerate}
\item \textbf{Functional Equation Method:} The primary method for constructing meromorphic continuations of a Dirichlet series is by proving the existence of a functional equation. This is done for the Riemann zeta function and many others including Dedekind zeta functions, Dirichlet and Hecke $L$-functions, and the $L$-functions associated to elliptic curves and other abelian varieties.

This type of construction betrays deep and powerful arithmetic structure, sometimes going beyond our current understanding of the subject. For this reason, it is not surprising to find that constructing a functional equation can be very involved and sometimes not possible for certain Dirichlet series.

\item \textbf{Factorization Method:} This is a more accessible approach for constructing meromorphic continuations of a Dirichlet series that builds on our knowledge of already existing examples. The Dirichlet series is decomposed in terms of (usually as a sum or product of) Dirichlet series for which one factor has a meromorphic continuation proven by other means, and the other factor converges absolutely on a larger region. In the case of Euler products, this is usually done by writing the Dirichlet series as a product of $\zeta(s)^k$ (or similar $L$-functions) with another absolutely convergent Euler product.

This method would never work in a vacuum - it requires us to build off of a collection of Dirichlet series with Euler products whose meromorophic continuations we already know.
\end{enumerate}

Often, the Factorization Method is easier than proving the existence of a functional equation, and tends to work for a larger class of Dirichlet series. Consider the following example:

\begin{BOXexample}\label{ex:intro}
Consider
\begin{align*}
    \prod_p (1 + 2p^{-s}) = \zeta(s)^2 \prod_p (1 - 3p^{-2s} + 2p^{-3s}).
\end{align*}
This can be checked locally by verifying that
\[
1+2p^{-s} = (1 - p^{-s})^{-2}(1 - 3p^{-2s}+2p^{-3s})
\]
for every prime $p$. This is a standard example of constructing a meromorphic continuation. While the left-hand side is absolutely convergent for ${\rm Re}(s) > 1$, the right-hand side is a product of $\zeta(s)^2$ (which is meromophic on all of $\C$) and an Euler product with converges absolutely on ${\rm Re}(s) > 1/2$. Not only does this extend the region of meromorphicity, we can immediately identify the only pole in this region as being $s=1$ of order $2$. Moreover, the nonzero limit
\[
\lim_{s\to 1} (s-1)^2\prod_p (1 + 2p^{-s})
\]
is given explicitly by the convergent Euler product
\[
\prod_p (1 - 3p^{-2} + 2p^{-3}).
\]
\end{BOXexample}

Example \ref{ex:intro} showcases a common target for work in arithmetic statistics: $s=1$ is the \textbf{rightmost singularity} of the Dirichlet series, which means that it is the singularity with the largest real part. In arithmetic statistics, meromorphic continuations of $\sum a_n n^{-s}$ are primarily constructed on an open neighborhood of the rightmost singularity so that the Selberg--Delange method can be used to convert the rightmost singularity into asymptotic information  for the sum of coefficients function $\sum_{n < X} a_n$. (The Selberg--Delange method is also colloquially referred to as a ``Tauberian theorem"). The location, order, and coefficient of the rightmost singularity together provide the necessary information to compute the main term in the asymptotic growth rate.

This paper has been separated into three parts in order to address three separate goals:
\begin{enumerate}[1)]
    \item We present an expository introduction to common techniques and heuristics for using the Factorization Method to compute the rightmost singularity, geared towards research in arithmetic statistics.
    
    This is done in Part \ref{part:calculations}. We intend for these sections to serve as an introduction of these methods to new researchers in the area, with a particular focus on computational examples. This section could be included with a first course in analytic number theory: a typical introduction to generating Dirichlet series provides ample background.
    
    \item We give an exposition of previous work on continuations for certain families of Euler products via the Factorization Method, together with self-contained proofs, in Part \ref{part:existence}. The results we explore have a particular focus on cases that are known to be common in arithmetic statistics research.

    We mostly focus on an older program of study, tracing back to work of Estermann \cite{estermann_1928}, for determining the largest region on which a continuation exists. This is referred to as finding the ``natural boundary" for the function. These works typically do not always provide explicit information about the locations and orders of any singularities, although such information can usually be teased out of their proofs. We will primarily focus on constructing the analytic continuations considered in \cite{estermann_1928} and in \cite{dahlquist_1952, kurokawa_1978, kurokawa_1986I, kurokawa_1986II, landau_walfisz_1920, moroz_1988} following Estermann's work. We will briefly discuss the theory of the natural boundary of Euler products in Section \ref{sec:natural_boundary}, although our focus will be on the construction of analytic continuations.
    
    The program following Estermann's work produces the data underlying analytic continuation all at once, packaged as an infinite product of powers of the Riemann zeta function (see Section \ref{sec:iterating_factorization_method} as well as the sequence $b_n(Q)$ in Section \ref{sec:history}). This is in contrast to more modern results in arithmetic statistics, which focus on explicit computation of one singularity at a time (from right to left) as is necessary to apply the Selberg--Delange method. Researchers in arithmetic statistics are typically less concerned with the natural boundary. A non-exhaustive list of such papers includes the analytic sections of \cite{alberts2021,frei-loughran-newton2019,kaplan-marcinek-takloo-bighash2015,wood2009}, where the necessary meromorphic continuations are proven as analytic lemmas. Most of these results reprove special cases of previously known meromorphic continuations, only as far to the left as necessary to reveal the rightmost singularity.

    \item Finally, in Part \ref{part:explicit} we give statements for continuations for certain families of Euler products via the Factorization Method which include explicit descriptions of the locations and orders of any singularities. In order to make these results more accessible for computation, we prove new explicit formulas for the coefficients of certain log power series in Theorem \ref{thm:log} and Theorem \ref{thm:log_trchar}.
    
    We tailor these results for use in arithmetic statistics research, specifically providing enough information for researchers to apply the Selberg--Delange method to produce an asymptotic main term, power saving error terms when possible, and secondary asymptotic terms (which correspond to the singularities to the left of the rightmost singularity) when these exist.
\end{enumerate}

Our intention is for this paper to be a good reference for research in arithmetic statistics. The Euler products considered in this paper are not an exhaustive list for which the Factorization Method will work. We restrict ourselves to the most relevant cases to research in arithmetic statistics, but with some modification these techniques will apply to numerous other cases as well. Towards this end, we include some remarks and technical lemmas in greater generality than we will use.

Readers looking for a quick reference for a general result can refer to the outline below:
\begin{itemize}
    \item \textbf{Euler products with constant coefficients.} These are Euler products of the form $\prod Q(p^{-s})$ for some complex function $Q(z)$. Some examples can be found in Section \ref{sec:examples_with_const_coef}, while a brief history of analytic continuations of these Euler products can be found in Subsection \ref{subsec:histroy_const_coef}.

    Meromorphic continuations for these Euler products to ${\rm Re}(s) > 0$ are proven in Theorem \ref{thm:existence_const_coef}. Explicit computation of the poles can be done with the assistance of Theorem \ref{thm:explicit_singularities_const_coef}.

    \item \textbf{Frobenian Euler products.} These are Euler products $\prod Q_p(p^{-s})$ for $Q_p(z)$ a collection of complex functions for which $p\mapsto Q_p(z)$ is determined by the splitting type of $p$ in some finite extension. Some examples can be found in Section \ref{sec:examples_with_Frob_coef}, while a brief history of analytic continuations of these Euler products can be found in Subsection \ref{subsec:history_Frob_coef}.

    Meromorphic continuations for these Euler products to ${\rm Re}(s) > 0$ are proven in Theorem \ref{thm:existence_Frob_coef}. Explicit computation of the poles can be done with the assistance of Theorem \ref{thm:explicit_singularities_Frob_coef}.

    \item \textbf{Factoring $\log(1-\sum x_i)$ in terms of logs of monomials.} We prove a result in linear algebra and combinatorics in Theorem \ref{thm:log}, which factors $\log(1-\sum x_i)$ into a nonegative integer linear combination of $\log(1-m)$ as $m$ ranges over the monomials in $\{x_i\}$.

    This result was essential in determining when the singularities of an Euler product are all poles. We expect that it will be useful for the study of many families of Euler products beyond those considered in this paper.

    \item \textbf{Factoring $1+{\rm tr}\rho(g)z$ in terms of characteristic polynomials.} We prove Theorem \ref{thm:log_trchar} that expresses $1+{\rm tr}\rho(g)z$ as an infinite product of characteristic polynomials for explicitly constructed representations.

    This result was essential in determining when the singularities of a Frobenian Euler product are all poles. We expect that it will be useful for the study of many of families of Euler products beyond those considered in this paper.
\end{itemize}

\section*{Acknowledgements}
The author was partially supported by an AMS-Simons Travel Grant.

The author would like to thank Robert Lemke Oliver for numerous conversations on the analytic details of this paper. The author would also like to thank Alina Bucur, Helen Grundman, Kiran Kedlaya, Daniel Loughran, Alexander Slamen, and Melanie Matchett Wood for helpful conversations and feedback.

\part{Calculating the Rightmost Singularity}\label{part:calculations}

\section{A Recipe for the Factorization Method}

In Example \ref{ex:intro}, we provided a factorization of the Euler product and verified it directly. However, in practice you will typically need to construct the decomposition from scratch. The Factorization Method for actually computing the rightmost singularity via this type of decomposition will always broadly follow the same steps:
\begin{enumerate}[(1)]
\item Identify the \textbf{minimum degree term(s)} in each Euler factor (excluding the constant term) as described in Heuristic \ref{heur}. Use these to identify $a$ and $b$ for which the heuristic predicts a pole at $s=1/a$ of order $b$.

\item Multiply the top and bottom by a \textbf{strategic Euler factor} corresponding to each minimum degree term. This factor is chosen to make the following two steps work as intended.

\item Separate the denominators, and pull them the outside. This product should match the Euler product of a Dirichlet series with a well-known meromorphic continuation, such as the Riemann Zeta function or an $L$-function.

\item Multiply out the numerators. The minimum degree terms should cancel out, and the resulting Euler product should converge absolutely on a larger region.
\end{enumerate}

Let's walk through this process to recreate the factorization in Example \ref{ex:intro}.

\begin{BOXexample}\label{ex:intro_expanded}
    We again consider the Dirichlet series
    \[
    \mathcal{Q}(s) = \prod_p (1 + 2p^{-s}).
    \]
    We now apply the four steps to the Factorization method:
    \begin{enumerate}[(1)]
    \item The minimum degree term of each Euler factor is $2p^{-s}$. The degree is $a=1$, and the leading coefficient is a constant value $b=2$.

    \item We choose the strategic Euler factor $(1-p^{-as})^b$ based on the $a$ and $b$ values from step (1). In our case this is $(1 - p^{-s})^2$, so that multiplying top and bottom gives
    \[
    \mathcal{Q}(s) = \prod_p \frac{(1 + 2p^{-s})(1-p^{-s})^2}{(1-p^{-s})^2}.
    \]
    We will see why this choice is the right one in steps (3) and (4).

    \item Pulling out the denominators, the product
    \[
    \prod_p \frac{1}{(1-p^{-s})^2}
    \]
    is recognized to be the Euler product for $\zeta(s)^2$. This is part of the reason that $(1-p^{-s})^2$ is a good choice. Thus, we have shown
    \[
    \mathcal{Q}(s) = \zeta(s)^2\prod_p (1 + 2p^{-s})(1-p^{-s})^2.
    \]

    \item Next, we multiply out the remaining numerator
    \begin{align*}
        \mathcal{Q}(s) &= \zeta(s)^2\prod_p (1 + 2p^{-s})(1-2p^{-s}+p^{-2s})\\
        &= \zeta(s)^2\prod_p (1 -3p^{-2s} + 2p^{-3s}).
    \end{align*}
    Notice that the minimum degree terms cancel out, leaving $p^{-2s}$ as the smallest remaining degree. This is the other reason that $(1-p^{-s})^2$ is a good choice. We will see in Example \ref{ex:intro_abs_conv} how to conclude that the remaining Euler product converges absolutely on the region ${\rm Re}(s)>1/2$.
    \end{enumerate}
\end{BOXexample}

We've bolded the names ``\textbf{minimum degree term(s)}" and ``\textbf{strategic Euler factor}" in the Factorization Method on purpose. This is merely an outline of a general method, not a theorem, where our goal in defining these terms is to make it so that steps (3) and (4) succeed. This can be more of an art than a science, and it depends very strongly on which ``Dirichlet series with a well-known meromorphic continuation" you plan or expect to factor out in step (3).

We will explore several families of Dirichlet series where the best choice of minimum degree terms and strategic Euler factors are well understood, and we will explain where the choices come from. The example
\[
b p^{-as} \longmapsto (1-p^{-as})^b
\]
is a good one to keep in mind as we explore more involved cases. However, we warn the reader that applying the Factorization Method to other Euler products may require some creativity.

\section{A Heuristic for the Rightmost Singularity}\label{sec:right}

Many researchers are adept at predicting the location of the rightmost pole of an Euler product, often times without needing to perform any computations. This is because they know the Factorization Method suggests that the rightmost singularity comes from the minimum degree terms, which is easily read off the Euler product in nice cases.

This has developed into a heuristic:

\begin{heuristic}\label{heur}
Consider the Euler product
\[
\mathcal{Q}(s) = \prod_p \left(1 + b_p p^{-as} + \cdots\right).
\]
If $b_p$ and the higher degree terms are ``reasonable", then each of the following hold:
\begin{enumerate}
    \item[(i)] $\mathcal{Q}(s)$ converges absolutely on the region ${\rm Re}(s)>1/a$,
    \item[(ii)] There exists a meromorphic continuation of $\mathcal{Q}(s)$ to (a branch cut of) an open neighborhood of ${\rm Re}(s) \ge 1/a$
    \item[(iii)] The rightmost singularity is at $s=1/a$ with order $b$ given by the average value of $b_p$, i.e.
    \[
    b:=\lim_{x\to \infty}\frac{1}{\pi(x)}\sum_{p\le x}b_p.
    \]
\end{enumerate}
Moreover, a branch cut is required in (ii) if and only if $b\not\in \Z$.
\end{heuristic}

A ``singularity of order $b$" at $s=1/a$ means that $(s-1/a)^b\mathcal{Q}(s)$ is holomorphic and nonzero at $s=1/a$. If $b\in \Z_{\le 0}$, we interpret this to mean that $\mathcal{Q}(s)$ has no singularities in an open neighborhood of ${\rm Re}(s) \ge 1/a$. If $b=0$, we would predict $\mathcal{Q}(1/a) \ne 0$, and if $b\in \Z_{<0}$ we would predict $\mathcal{Q}(1/a) = 0$ is a root of order $|b|$.

Example \ref{ex:intro} is an excellent example of this. The Euler product
\[
\prod_p (1 + 2p^{-s})
\]
has minimum degree term $p^{-s}$ (so $a=1$) with leading coefficient a constant $b=2$. This corresponds to the pole at $s=1$ of order $2$ inherited from the $\zeta(s)^2$ factor.

We are being purposefully vague with the term ``reasonable" in this heuristic. This is basically an educated guess, which can fail in the wrong conditions. Certainly the average value $b$ needs to exist, and moreover each Euler factor needs to converge.

The results we summarize and prove in this paper will imply the following useful result:
\begin{corollary}\label{cor:heur}
    Heuristic \ref{heur} is true if both of the following hold:
    \begin{enumerate}[(a)]
        \item The Euler factors $1 + b_pz^a + \cdots$ converge absolutely for all $|z|<1$ and are nonzero at $z=p^{-1/a}$, and
        \item The function $p\mapsto b_p$ is Frobenian, that is it factors through the Artin map of a finite extension $K/\Q$ for all but finitely many primes.
    \end{enumerate}
\end{corollary}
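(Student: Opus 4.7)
The plan is to obtain Corollary \ref{cor:heur} as a direct consequence of the main existence theorem (Theorem \ref{thm:existence_Frob_coef}) and its companion explicit-singularity theorem (Theorem \ref{thm:explicit_singularities_Frob_coef}) from Part \ref{part:explicit}. Hypothesis (b) places the Euler product $\mathcal{Q}(s) = \prod_p Q_p(p^{-s})$ squarely in the Frobenian framework those theorems are designed for, while hypothesis (a) supplies the analytic regularity needed both for the local factors to behave at $z = p^{-1/a}$ and for the strategic Euler factors produced by the Factorization Method to converge uniformly.

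For part (i) of the heuristic, I would prove absolute convergence on $\mathrm{Re}(s) > 1/a$ by a direct estimate. Since $b_p$ is Frobenian, it factors through a finite Galois group and takes only finitely many values, hence is bounded. Combining boundedness of $b_p$ with the absolute convergence of each $Q_p(z) = 1 + b_p z^a + \cdots$ on $|z| < 1$, I would derive a uniform bound of the form $\log Q_p(p^{-s}) = b_p p^{-as} + O(p^{-2a\,\mathrm{Re}(s)})$ for $p$ large, and sum against $\sum_p p^{-a\sigma}$ which converges for $\sigma > 1/a$.

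For parts (ii) and (iii), I would invoke Theorem \ref{thm:existence_Frob_coef} to produce the meromorphic continuation to an open neighborhood of $\mathrm{Re}(s) \geq 1/a$ (on the appropriate branch cut when $b \notin \Z$), and Theorem \ref{thm:explicit_singularities_Frob_coef} to extract the location and order of the rightmost singularity. The key matching is that, after decomposing the class function $p \mapsto b_p$ as a linear combination of characters of the Galois group of the defining extension $K/\Q$, the coefficient of the principal (trivial) character equals the Chebotarev average $b = \lim_x \pi(x)^{-1} \sum_{p \le x} b_p$. In the factorization supplied by the theorem, this coefficient governs the exponent of $\zeta(as)$, while each non-principal character contributes an Artin $L$-function $L(as, \chi)$ which is holomorphic and nonzero at $s = 1/a$. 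Consequently the rightmost singularity sits at $s = 1/a$ with order exactly $b$.

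The main obstacle, and the reason the stated hypotheses are not vacuous, is controlling the residual Euler product once the $L$-function contributions have been factored out. The non-vanishing condition $Q_p(p^{-1/a}) \neq 0$ is essential to rule out hidden cancellations that would alter the order count or create unexpected singularities, and absolute convergence of $Q_p(z)$ on $|z| < 1$ is precisely what makes the strategic-Euler-factor step of Example \ref{ex:intro_expanded} valid uniformly in $p$. The integer-versus-non-integer dichotomy for $b$ then follows immediately from the fact that a non-integer power of $\zeta(as)$ is only defined after selecting a branch of the logarithm, producing a branch cut rather than an isolated pole; Theorem \ref{thm:explicit_singularities_Frob_coef} encodes this dichotomy directly in the form of its conclusion.
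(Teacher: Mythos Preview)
Your overall strategy matches the paper: the corollary is meant to follow from Theorem \ref{thm:existence_Frob_coef} (and its explicit companion), with the order at $s=1/a$ read off as the coefficient of the trivial character in the decomposition $b_p=\sum_\rho c_\rho\,{\rm tr}\rho(\Fr_p)$, which by Chebotarev equals the average $b$; the non-trivial Artin $L$-functions are holomorphic and nonzero at $s=1/a$, and the non-vanishing hypothesis in (a) prevents individual Euler factors from shifting the order.

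One point needs care. Hypothesis (b) says only that $p\mapsto b_p$ is Frobenian, not that $p\mapsto Q_p(z)$ is, so your assertion that (b) ``places the Euler product squarely in the Frobenian framework'' and your direct invocation of Theorem \ref{thm:existence_Frob_coef} are not justified as written. Since the heuristic only asks for continuation to a neighborhood of ${\rm Re}(s)\ge 1/a$, what is actually needed is a \emph{single} step of the Factorization Method: factor out $\prod_\rho L(as,\rho)^{c_\rho}$ and check that the remaining Euler product converges absolutely on ${\rm Re}(s)>1/a-\delta$ for some $\delta>0$. That last step, and your uniform estimate in part (i), require uniform control in $p$ on the higher-degree coefficients; individual absolute convergence for $|z|<1$ does not by itself give this (for instance $Q_p(z)=1+z+pz^2$ satisfies (a) and (b) literally with $a=1$, $b_p=1$, yet the rightmost pole has order $2$, cf.\ Example \ref{ex:growing_coef}). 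You should make explicit whichever additional reading you adopt---either uniform bounds on the tail, or the full map $p\mapsto Q_p$ being Frobenian, which is the only reading under which the paper's main theorems apply verbatim. A smaller issue: the remainder exponent in your part (i) estimate should be $(a+1)\sigma$ rather than $2a\sigma$, since the next term after $b_pz^a$ has degree $a+1$ in general.
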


We will also see several examples beyond this corollary where the Factorization Method works.

\begin{remark}
    There are cases where the Factorization Method produces the rightmost singularity, but for which Heuristic \ref{heur} is false. One source of such an issue is suggested by Corollary \ref{cor:heur}(a) - any Euler factor which vanishes at $s=1/a$ will change the order of the singularity.
\end{remark}

\section{Absolute Convergence of Euler Products}
The Factorization Method shines by expanding the region of absolute convergence after factoring out copies of the Riemann zeta function. In Example \ref{ex:intro}, the Dirichlet series
\[
\prod_p (1 + 2p^{-s})
\]
converges absolutely on the region ${\rm Re}(s) > 1$, while
\[
\zeta(s)^{-2}\prod_p (1 + 2p^{-s}) = \prod_p ( 1-3p^{-2s}+2p^{-3s})
\]
converges absolutely on the region ${\rm Re}(s) > 1/2$.

Absolute convergence of an Euler product is readily checked by taking the logarithm, and checking absolute convergence of the resulting sum. However, rather than composing with a log function each time, we can use the following theorem as a shortcut:

\begin{theorem}\label{thm:abs_conv}
    Let $f:\Z^+\to \C$ be a multiplicative function. Then, for each $s\in \C$, the generating Dirichlet series $\sum f(n) n^{-s}$ converges absolutely if and only if the series on prime powers
    \[
    \sum_{p\text{ prime}}\sum_{k=1}^{\infty} f(p^k) p^{-ks}
    \]
    converges absolutely.
\end{theorem}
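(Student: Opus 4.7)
The plan is to reduce to the case of a nonnegative multiplicative function, and then use the standard correspondence between sums and Euler products for such functions. Since $f$ is multiplicative, so is $|f|$, and hence
\[
g(n) := |f(n)\, n^{-s}| = |f(n)|\, n^{-{\rm Re}(s)}
\]
defines a nonnegative multiplicative function on $\Z^+$. The absolute convergence of $\sum_n f(n) n^{-s}$ is equivalent to the convergence of $\sum_n g(n)$, and likewise the absolute convergence of $\sum_p\sum_{k\ge 1} f(p^k) p^{-ks}$ is equivalent to the convergence of $\sum_p \sum_{k\ge 1} g(p^k)$. It therefore suffices to prove: for any nonnegative multiplicative $g:\Z^+\to \R_{\ge 0}$, the series $\sum_n g(n)$ converges if and only if $\sum_p \sum_{k\ge 1} g(p^k)$ converges.

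The forward implication is immediate, since the prime-power series is a subseries of the full series and both consist of nonnegative terms. For the reverse implication, the key step is to sandwich the partial sums of $\sum_n g(n)$ between partial products of Euler factors. Given $N$, let $S_N$ denote the set of positive integers whose prime factors are all at most $N$. Unique prime factorization together with multiplicativity of $g$ yields
\[
\prod_{p\le N}\Bigl(1 + \sum_{k\ge 1} g(p^k)\Bigr) \;=\; \sum_{n\in S_N} g(n),
\]
and since $\{n : n\le N\}\subseteq S_N \subseteq \Z^+$, we obtain
\[
\sum_{n\le N} g(n) \;\le\; \prod_{p\le N}\Bigl(1 + \sum_{k\ge 1} g(p^k)\Bigr) \;\le\; \sum_n g(n).
\]
Letting $N\to \infty$ shows that $\sum_n g(n)$ converges if and only if the partial products are bounded above, in which case equality holds throughout in the limit.

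To finish, I would invoke the standard criterion for convergence of infinite products of nonnegative terms: since each factor $1 + \sum_{k\ge 1} g(p^k)$ is at least $1$, the product $\prod_p (1+\sum_{k\ge 1} g(p^k))$ converges if and only if $\sum_p \sum_{k\ge 1} g(p^k)$ converges. This follows from the elementary estimates $\log(1+a) \le a$ and $\log(1+a)\ge a/2$ for $a\in[0,1]$, which control $\sum_p \log(1+\sum_{k\ge 1}g(p^k))$ against $\sum_p\sum_{k\ge 1}g(p^k)$ (noting that convergence of the latter forces the inner sums to tend to $0$). There is no serious obstacle here; the only point requiring care is the sandwich inequality, which is precisely where the multiplicativity of $g$ is used.
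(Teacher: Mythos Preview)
Your proof is correct and follows essentially the same approach as the paper's: both reduce to nonnegative terms, handle the forward direction by the subseries observation, and for the reverse direction bound the partial sums of $\sum g(n)$ above by a finite Euler product and then control the product via $1+a\le e^a$. The only cosmetic difference is that the paper applies $1+a\le e^a$ directly to get the bound $\exp\bigl(\sum_p\sum_k g(p^k)\bigr)$ in one line, whereas you route through the full ``product converges iff sum converges'' criterion; either way the content is the same.
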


This is a special case of classical results for the absolute convergence of infinite products, some examples of which can be found in \cite[Chapter VII]{knopp1990}. Using this theorem, together with a convergence test from a typical Calculus course, we can check the absolute convergence claim in Example \ref{ex:intro}.

\begin{BOXexample}\label{ex:intro_abs_conv}
    We will prove that
    \[
    \prod_p (1 -3p^{-2s} + 2p^{-3s})
    \]
    converges absolutely on the region ${\rm Re}(s) > 1/2$, as claimed in Example \ref{ex:intro}. By Theorem \ref{thm:abs_conv}, this is equivalent to the convergence of the series
    \[
    \sum_p \left\lvert\frac{-3}{p^{-2s}}\right\rvert + \left\lvert\frac{2}{p^{-3s}}\right\rvert = \sum_p \frac{3}{p^{2{\rm Re}(s)}} + \frac{2}{p^{3{\rm Re}(s)}}.
    \]
    By the $p$-series convergence test, this series converges if and only if both $2{\rm Re}(s) > 1$ and $3{\rm Re}(s) > 1$, which is equivalent to ${\rm Re}(s) > 1/2$.
\end{BOXexample}

For nearly every Euler product we consider in this paper, the process of checking absolute convergence will be the same: apply Theorem \ref{thm:abs_conv}, then compare the sum over prime powers with a $p$-series to determine the region of absolute convergence. Sometimes other convergence tests, like the ratio test, will be useful as well.

In practice, Theorem \ref{thm:abs_conv} and a Calculus convergence test is typically sufficient for the cases of interest in arithmetic statistics. For this reason, we will leave the process of checking absolute convergence in subsequent examples as exercises for the reader.

We include a proof of Theorem \ref{thm:abs_conv} here for the sake of completeness, based on pre-existing proofs such as those found in \cite{knopp1990}.

\begin{proof}[Proof of Theorem \ref{thm:abs_conv}]
    Given that $f$ is multiplicative, the absolute Dirichlet series formally factors as an Euler product
    \[
    \sum_{n=1}^{\infty} |f(n)n^{-s}| = \prod_p\left(1 + \sum_{k=1}^{\infty}|f(p^k)p^{-s}|\right).
    \]

    Suppose first that the Dirichlet series $\sum f(n) n^{-s}$ converges absolutely. Then certainly the sum
    \[
    \sum_{p\text{ prime}}\sum_{k=1}^{\infty} |f(p^k) p^{-ks}| \le \sum_{n=1}^{\infty} |f(n)n^{-s}|
    \]
    converges absolutely by the Comparison test, as it is a sum over a subset of terms in the original series.

    Conversely, suppose the sum over prime powers converges absolutely. The partial sums of $\sum |f(n) n^{-s}|$ necessarily form an increasing sequence. It suffices to show that this sequence is also bounded, as any bounded and monotonic sequence is convergent. The partial sums are bounded by
    \begin{align*}
    \sum_{n=1}^N |f(n)n^{-s}| &\le \prod_{p\le N}\left(1 + \sum_{k\le \log_2(N)}|f(p^k)p^{-s}|\right)\\
    &\le \prod_{p\le N} \exp\left(\sum_{k\le \log_2(N)}|f(p^k)p^{-s}|\right)\\
    &=\exp\left(\sum_{p\le N}\sum_{k\le \log_2(N)}|f(p^k)p^{-s}|\right)\\
    &\le \exp\left(\sum_{p}\sum_{k=1}^{\infty}|f(p^k)p^{-s}|\right).
    \end{align*}
    Thus, absolute convergence of the sum over prime powers shows the partial sums of $\sum |f(n) n^{-s}|$ form a bounded sequence. This concludes the proof.
\end{proof}

\section{Examples with Constant Coefficients}\label{sec:examples_with_const_coef}
Following Example \ref{ex:intro_expanded}, whenever the coefficients of the Euler factors do not depend on $p$ we can perform the Factorization Method without modifying the argument. Often this will verify Heuristic \ref{heur}, however zeros and singularities of individual Euler products can cause the heuristic to fail.

Given a power series $Q(z) = 1 + bz^a + \cdots\in 1 + z\C[[z]]$ with constant coefficients, consider the corresponding Euler product
\[
\mathcal{Q}(s) = \prod_p Q(p^{-s}).
\]
We take $b p^{-as}$ to be the minimum degree term, matching the usual notions of ``degree" and ``term" from the language of power series. As described in Example \ref{ex:intro_expanded}, we can make a strategic choice for Euler factor by the dictionary
\[
b p^{-as} \longmapsto (1- p^{-as})^{b}.
\]

Wood proves that the Factorization method with this choice of strategic Euler factor will always work when the coefficients do not depend on $p$ in \cite[Lemma 2.12]{wood2009}, although these cases were a standard result at the time. Estermann proved the first result of this kind when $h(z)$ is a polynomial \cite{estermann_1928}, while Dahlquist dealt with the general cases that $h(z)$ is holomorphic \cite{dahlquist_1952}. When one takes care to keep track of the orders of the singularities in Estermann and Dahlquist's proofs, their methods reproduce the rightmost singularity achieved by Wood. We discuss the statements and give self-contained proofs of Estermann and Dahlquist's results in Part \ref{part:existence}.

\begin{remark}
    Notice that we specifically claimed that Wood's, Estermann's, and Dahlquist's results imply that the Factorization Method always works on Euler products with constant coefficients. We \emph{did not} claim that Heuristic \ref{heur} is always true in these cases. Frei--Loughran--Newton remark on the possibility that Heuristic \ref{heur} can fail in \cite[Remark 2.4]{frei-loughran-newton2019}. We will see an example of this heuristic failing in Example \ref{ex:bad_euler_factor}.
\end{remark}

Let's apply the Factorization Method to a new example, slightly more complicated than Example \ref{ex:intro_expanded}.

\begin{BOXexample}\label{ex:complex_pole}
Consider the Euler product
\[
\mathcal{Q}(s) = \prod_p (1 + (1+i)p^{-2s} - p^{-3s}).
\]
We now perform the four steps of the Factorization Method.
\begin{enumerate}[(1)]
    \item The lowest degree term is given by $(1+i)p^{-2s}$ so that $a=2$ and $b=1+i$.
\end{enumerate}
Using Heuristic \ref{heur}, we can now predict that the rightmost singularity is at $s=1/2$ of order $1+i$. This singularity would have non-integral order! This means that we predict that $(s-1/2)^{1+i}\mathcal{Q}(s)$ is holomorphic on a neighborhood of ${\rm Re}(s) \ge 1/2$. In particular, we expect that we can only achieve meromorphic continuation of $\mathcal{Q}(s)$ on a branch cut. However, as we shall see, the rest of the argument is unaffected.

\begin{enumerate}
    \item[(2)] We multiply the top and bottom by our strategic factor $(1-p^{-as})^b = (1-p^{-2s})^{1+i}$.
    \item[(3)] Pulling the denominators out front, we decompose the Euler product as
    \[
    \mathcal{Q}(s) = \zeta(2s)^{1+i}\prod_p (1 + (1+i)p^{-2s} - p^{-3s})(1-p^{-2s})^{1+i}.
    \]
    \item [(4)] In order to distribute the Euler factors, we must expand $(1-p^{-2s})^{1+i}$ according to the binomial Taylor series. For the purposes of this example, we only need to first few terms
    \begin{align*}
        \mathcal{Q}(s) &= \zeta(2s)^{1+i}\prod_p (1 + (1+i)p^{-2s} - p^{-3s})\left(1 - (1+i)p^{-2s} + O(p^{-4s})\right)\\
        &= \zeta(2s)^{1+i}\prod_p \left(1 - p^{-3s} + O(p^{-4s})\right).
    \end{align*}
    Among ${\rm Re}(s)>0$, the implied constant in the big-oh can be bounded independent of $p$ using the remainder theorem for Taylor series. We leave it as an exercise to the reader to check that this Euler product converges absolutely on the region ${\rm Re}(s) > 1/3$.
\end{enumerate}

Thus, $\mathcal{Q}(s)$ inherits the singularity of $\zeta(2s)^{1+i}$ at $s=1/2$ of order $1+i$, verifying Heuristic \ref{heur} for this Euler product.
\end{BOXexample}

\begin{remark}
    We need to be careful with functions like $(s-1/2)^{1+i}$ or $\zeta(2s)^{1+i}$. Complex exponential functions, like the square root function, are technically multi-valued complex functions. This is why branch cuts are required. Throughout this paper, we take the convention
    \[
    z^\alpha = e^{\alpha \log(z)},
    \]
    where $\log(z)$ is some branch cut of the log function. You may think of this as the principal branch cut if you like, cutting the complex plane along the negative real axis.

    In practice, it is convenient in arithmetic statistics to only work with branch cuts that cut a line to the left of a singularity. This is because of how the Selberg--Delange method works by moving contour integrals from right to left. This is always possible, as the only thing a branch cut needs to do is prevent one from drawing a loop around the singularity.
\end{remark}

We can widen the families of Euler products we consider by allowing finitely many primes to ``break the pattern" established by the other Euler factors. These exceptional factors are merely pushed to the side, to be consider at the end. This happens fairly often in arithmetic statistics, where certain wild ramification behavior can induce different Euler factors.

\begin{BOXexample}
The generating series of absolute quadratic discriminants is given by
\[
(1 + 4^{-s} + 2\cdot 8^{-s})\prod_{p\ne 2}(1 + p^{-s}).
\]
The way that $2$ divides a quadratic discriminant is more varied than for odd primes, which comes from the wild ramification type of $2$ in quadratic extensions.

We can shove the factor at $2$ to the side, and complete to remaining product with a better factor obeying the established pattern:
\[
\frac{1 + 4^{-s} + 2\cdot 8^{-s}}{1+2^{-s}}\prod_{p}(1 + p^{-s}) = \frac{1 + 4^{-s} + 2\cdot 8^{-s}}{1+2^{-s}}\frac{\zeta(s)}{\zeta(2s)}.
\]
Thus, the rightmost pole is at $s=1$ of order $1$, agreeing with the average value of
\[
b_p = \begin{cases}
0 & p=2\\
1 & p\ne 2.
\end{cases}
\]
\end{BOXexample}

It is often the case that number theoretic facts hold for ``all but finitely many primes". This example shows that finitely many primes are no obstacle for meromorphic continuation.

We caution the reader that bad primes may not be obvious at first glance, and that they can break Heuristic \ref{heur} as we see in the next example.

\begin{BOXexample}\label{ex:bad_euler_factor}
    Consider the Euler product
    \begin{align*}
    \mathcal{Q}(s) &= \prod_p \frac{1}{1 - 2p^{-s}}\\
    &=\prod_p(1 + 2p^{-s} + 4p^{-2s} + \cdots ).
    \end{align*}
    It does not look like we have any bad primes, as all the Euler factors look the same. Let's try out the Factorization Method.
    \begin{enumerate}[(1)]
        \item The lowest degree term is $2p^{-s}$ of degree $a=1$ with constant coefficient $b=2$.
    \end{enumerate}
    Thus, Heuristic \ref{heur} predicts the rightmost pole is at $s=1$ of order $2$. We proceed with the next steps of the method.
    \begin{enumerate}
        \item[(2)] We multiply the top and bottom by the strategic Euler factor $(1 - p^{-s})^{2}$ to produce
        \begin{align*}
            \mathcal{Q}(s) &= \prod_p \frac{1}{(1-p^{-s})^2} \prod_p (1+2p^{-s}+4p^{-2s}+\cdots)(1-p^{-s})^2.
        \end{align*}
        \item[(3)] We distribute the Euler factors
        \begin{align*}
            \mathcal{Q}(s) &=\zeta(s)^2 \prod_p (1+2p^{-s}+4p^{-2s}+\cdots)(1-2p^{-s}+p^{-2s})\\
            &=\zeta(s)^2 \prod_p (1+p^{-2s}+2p^{-3s}+4p^{-4s}+\cdots),
        \end{align*}
        where we see that the minimum degree terms at $p^{-s}$ do cancel out.
    \end{enumerate}

    It is tempting to skip part of step (4) in the Factorization method and say it is left as an exercise to the reader\footnote{Just like the lazy author does in the many examples in this paper.} to verify that the remaining Euler product converges absolutely on the region ${\rm Re}(s)>1/2$, because the remaining term of smallest degree is $p^{-2s}$.

    Except this is not true! The remaining Euler factors are geometric series
    \[
    1 + \sum_{k=0}^\infty 2^{k}p^{-(k+2)s} = 1 + p^{-2s}\sum_{k=0}^\infty (2p^{-s})^k,
    \]
    which converge absolutely if and only if $|2p^{-s}| < 1$. This is equivalent to ${\rm Re}(s) > \frac{\log 2}{\log p}$. By taking the sum over prime powers as described in Theorem \ref{thm:abs_conv}, we are forced to conclude that the remaining Euler product converges absolutely on the region ${\rm Re}(s) > 1$ (thanks to the $p=2$ factor). This region is not big enough to contain the predicted singularity at $s=1$. Thus, the Factorization Method failed at step $(4)$.

    Let us try again, but this time treat the $p=2$ factor as exceptional and pull it to the outside first. Steps (1) and (2) are the same for the Euler product over $p\ne 2$, so that our factorization in step (3) simplifies to:
    \begin{align*}
        \mathcal{Q}(s) &= \frac{1}{1-2^{1-s}}\prod_{p\ne 2}\frac{1}{(1-p^{-s})^2} \prod_{p\ne 2} (1 + p^{-2s} + 2p^{-3s} + 4p^{-4s}+\cdots)\\
        &= \frac{(1-2^{-s})^2}{1-2^{1-s}}\zeta(s)^2 \prod_{p\ne 2} (1 + p^{-2s} + 2p^{-3s} + 4p^{-4s}+\cdots).
    \end{align*}
    However, without the $p=2$ factor to mess things up, the remaining Euler product
    \[
    \prod_{p\ne 2} (1 + p^{-2s} + 2p^{-3s} + 4p^{-4s}+\cdots) = \prod_{p\ne 2}\left(1 + p^{-2s}\sum_{k=0}^\infty (2p^{-s})^k\right)
    \]
    is absolutely convergent for ${\rm Re}(s) > \frac{\log 2}{\log 3}$. This region is now large enough to contain the singularity at $s=1$, concluding step (4).

    We can then conclude that $\mathcal{Q}(s)$ has a pole at $s=1$ of order $3$, contradicting Heuristic \ref{heur}. This pole comes from the pole of order $2$ at $s=1$ in $\zeta(s)^2$ and the simple pole at $s=1$ of the exceptional factor $\frac{(1-2^{-s})^2}{1-2^{1-s}}$.

    In fact, this is not the only rightmost pole. The factor $\frac{(1-2^{-s})^2}{1-2^{1-s}}$ has simple poles at $s = 1 + \frac{2\pi k}{\log 2} i$ for each $k\in \Z$, which all lie on the line ${\rm Re}(s) = 1$.
\end{BOXexample}

This example is a good justification for having the notion of ``reasonable" in Heuristic \ref{heur} require that the individual Euler factors $1+b_p p^{-s} + \cdots$ converge and are nonzero on an open neighborhood of ${\rm Re}(s) \ge 1/a$.

It is notable that Dahlquist \cite{dahlquist_1952} makes no such restriction in their work. Indeed, the only problem that $p=2$ caused in our example was that it threw off our prediction for the order of the pole. The Euler product still has a meromorphic continuation, the existence of which was Dahlquist's primary goal. By adjusting appropriately for Euler factors with ``bad" behavior, this behavior can be accounted for.

\section{Examples with Frobenian Coefficients}\label{sec:examples_with_Frob_coef}

Another important case for arithmetic statistics is when $b_p$ is determined by the splitting type of $p$ in a finite extension of $\Q$. Classical examples of this property are Dirichlet characters. For example, the Dirichlet character
\[
\chi(n) = \begin{cases}
    1 & n\equiv 1 \pmod 4\\
    -1 & n\equiv -1\pmod 4\\
    0 & 2\mid n
\end{cases}
\]
has $\chi(p)$ determined by the splitting type of $p$ in $\Q(i)/\Q$.

\begin{definition}\label{def:frobenian}
    Let $K$ be a number field and $A$ a set. A function $f:\{\text{primes of }K\} \to A$ is called \textbf{Frobenian} if there exists a finite extension $E/K$, a finite set of places $S$, and a class function\footnote{Recall that a class function is a function which is constant on conjugacy classes.} $c:\Gal(E/K) \to A$ such that the diagram
    \[
    \begin{tikzcd}
        \{\text{primes of }K\} - S \rar{f}\dar[swap]{\text{Artin map}} &A\\
        \Gal(E/K) \urar[swap]{c}
    \end{tikzcd}
    \]
    commutes.
\end{definition}

To our best knowledge, the term Frobenian function is due to Serre. A nice overview of the concept can be found in \cite[Subsection 3.3]{serre2012} as well as \cite[Section 2]{loughran-matthiesen2019}.

We call an Euler product $\prod Q_p(p^{-s})$ \textbf{Frobenian} if the function $p\mapsto Q_p(z)$ is Frobenian. To the author's knowledge, these Euler products were first studied by Kurokawa \cite{kurokawa_1978,kurokawa_1986I,kurokawa_1986II} and expanded on by Moroz \cite{moroz_1988} to cover all Frobenian polynomials (that is, when $Q_p(z)$ is a polynomial). We state and give a self-contained proof of Kurokawa and Moroz's results in Part \ref{part:existence}. These Euler products have received more attention in arithmetic statistics lately, as they arise naturally in the generating Dirichlet series of $G$-extensions. The meromorphic continuation of some special cases of this family are re-worked out as analytic lemmas of several more recent papers, including \cite[Proposition 2]{kaplan-marcinek-takloo-bighash2015} and \cite[Proposition 2.3]{frei-loughran-newton2019}, while the meromorphic continuation of any Frobenian Euler product with polynomial factors past the rightmost singularity is reproven in \cite[Proposition 2.2]{alberts2021}. All of these proofs are done with a version of the Factorization Method, although often written with different goals in mind.

We need to be more specific as to what we call a ``term" for Frobenian Euler products. We say that a term is something of the form $b \chi(\Fr_p)p^{-as}$ for $b$ a constant and $\chi$ a character of $E/K$. The degree of such a term is $a$. The irreducible characters of $\Gal(E/K)$ form a basis for the space of class functions, so that we can always decompose a Frobenian function $p\mapsto Q_p(z)$ into a sum of ``terms" of the form $b\chi(\Fr_p)p^{-as}$.

If $\chi$ is a Dirichlet character, we then choose a strategic Euler factor via the dictionary
\[
b \chi(p)p^{-as} \longmapsto (1 - \chi(p)p^{-as})^b.
\]
These are the Euler factors of Dirichlet $L$-functions, so we might expect $L$-functions to show up in step (3) of the Factorization method.

\begin{BOXexample}\label{ex:dirichlet_L_factor}
Consider the Euler product
\[
\mathcal{Q}(s) = \prod_{p\equiv 1 \mod 4} (1 + 4p^{-s}).
\]
The lowest degree terms are at $p^{-s}$ with $a=1$ and the coefficients $b_p$ are given by
\[
\begin{cases}
4 & p\equiv 1 \mod 4\\
0 & p\equiv 2,3 \mod 4,
\end{cases}
\]
with an average value $b = 2$. Thus, Heuristic \ref{heur} predicts the rightmost singularity to be a pole at $s=1$ of order $2$.

In order to perform the Factorization Method, we decompose $b_p$ into a linear combination of irreducible characters. Let $\chi_4$ be the nontrivial mod $4$ Dirichlet character, so that
\[
\mathcal{Q}(s) = \prod_{p\ne 2} (1 + 2p^{-s} +2\chi_4(p)p^{-s}).
\]
\begin{enumerate}[(1)]
    \item We treat \emph{both} $2p^{-s}$ and $2\chi_4(p)p^{-s}$ as minimum degree terms.
    \item We then multiply top and bottom by \emph{two} strategic Euler factors, both $(1-p^{-s})^{2}$ and $(1 - \chi_4(p)p^{-s})^{2}$.
    \begin{align*}
        \mathcal{Q}(s) &= \prod_{p\ne 2} \frac{(1 + 2p^{-s} +2\chi_4(p)p^{-s})(1-p^{-s})^2(1-\chi_4(p)p^{-s})^2}{(1-p^{-s})^2(1-\chi_4(p)p^{-s})^2}
    \end{align*}
    \item These Euler factors are related to the Riemann zeta function \emph{and} a Dirichlet $L$-function, so that pulling out the denominators gives the decomposition
    \begin{align*}
        \mathcal{Q}(s) 
        &=(1-2^{-2s})^2 \zeta(s)^2L(s,\chi_4)^2\prod_{p\ne 2} (1 + 2p^{-s} +2\chi_4(p)p^{-s})(1-p^{-s})^2(1-\chi_4(p)p^{-s})^2
    \end{align*}
    \item Distributing the remaining Euler factors gives
    \begin{align*}
        \mathcal{Q}(s)&=(1-2^{-2s})^2 \zeta(s)^2L(s,\chi_4)^2\prod_{p\ne 2}(1 + (2\chi_4(p)-2)p^{-2s} + (2 + \chi_4(p))p^{-3s}).
    \end{align*}
    Considering that $|\chi_4(p)|\le 1$, we leave it to the reader to check that the remaining Euler product is absolutely convergent on ${\rm Re}(s) > 1/2$.
\end{enumerate}

The fact that $L(1,\chi_4)\ne 0$ implies that $\mathcal{Q}(s)$ inherits its rightmost singularity solely from $\zeta(s)^2$, which is a pole at $s=1$ of order $2$ as predicted.
\end{BOXexample}

As the $b_p$ become more complicated, the behavior of $L$-functions at $s=1$ becomes more important. Luckily, we know that all Dirichlet $L$-functions are holomorphic and nonzero at $s=1$. For arbitrary Frobenian Euler products, we may have characters $\chi$ that are not Dirichlet characters. Suppose $\chi = {\rm tr}\rho$ for the Galois representation $\rho:\Gal(E/K)\to {\rm GL}(V)$. We then use the dictionary
\[
b \chi(\Fr_p) p^{-as} \longmapsto \det(I - \rho(\Fr_p|V^{I_p})p^{-as})^b
\]
to choose our strategic Euler factors. You may recognize these as (powers of) the Euler factors of Artin $L$-functions, so we expect some Artin $L$-functions to arise in step (3). Let us see one more example, using an irreducible character of dimension $>1$.

\begin{BOXexample}
    Let $\rho:G_\Q \to {\rm GL}_2(\F_2)$ be a Galois representation coming from the Galois action on $2$-torsion of the elliptic curve $E:y^2 = x^3 + x + 1$. Consider the Euler product
    \[
    \mathcal{Q}(s) = \prod_p \left(1 + {\rm tr}\rho(\Fr_p|E[2]^{I_p})p^{-2s} + 2p^{-3s}\right).
    \]
    The $2$-torsion points are precisely the point at infinity together with $(\alpha,0)$ for each root $\alpha$ of $x^3 + x + 1$. Thus, the fixed field of $\ker\rho$ is the splitting field $E$ of $x^3+x+1$ over $\Q$, which has $\Gal(E/\Q)\cong S_3 \cong {\rm GL}_2(\F_2)$. It follows that $\mathcal{Q}(s)$ is Frobenian, as the action of $\Fr_p$ on $E[2]^{I_p}$ is determined by the splitting type of $p$ in $E/\Q$. In particular, this action factors through the irreducible two-dimensional representation of $S_3$, so we can conclude that $\rho$ is irreducible.

    \begin{enumerate}[(1)]
        \item The minumum degree term is ${\rm tr}\rho(\Fr_p|E[2]^{I_p}) p^{-2s}$, which has degree $a=2$ and average value
        \[
            b = \sum_{g\in {\rm GL_2(\F_2)}} {\rm tr}\rho(g) = 0
        \]
        by $\rho$ irreducible. Heuristic \ref{heur} predicts a pole at $s=1/2$ of ``order $0$". A pole of ``order $0$" is no pole at all, so Heuristic \ref{heur} is really predicting an analytic continuation to an open neighborhood of ${\rm Re}(s) \ge 1/2$ with $\mathcal{Q}(1/2)\ne 0$.

        \item We choose our strategic Euler factor
        \[
        \det(I - \rho(\Fr_p|E[2]^{I_p}) p^{-2s}) = 1 - {\rm tr}\rho(\Fr_p|E[2]^{I_p}) p^{-2s} + \det\rho(\Fr_p|E[2]^{I_p})p^{-4s}.
        \]
        \item We can then compute
        \begin{align*}
            \mathcal{Q}(s) =& \prod_p \det(I - \rho(\Fr_p|E[2]^{I_p}) p^{-2s})^{-1}\\
            &\times \prod_p\left(1 + {\rm tr}\rho(\Fr_p|E[2]^{I_p})p^{-2s} + 2p^{-3s}\right)\det(I - \rho(\Fr_p|E[2]^{I_p}) p^{-2s})\\
            =& L(2s,\rho)\prod_p\left(1 + 2p^{-3s} +O(p^{-4s})\right).
        \end{align*}
        \item The Artin $L$-function $L(2s,\rho)$ is known to be holomorphic and nonzero on an open neighborhood of ${\rm Re}(s)\ge 1/2$, while the remaining Euler product converges absolutely on the region ${\rm Re}(s) > 1/3$.
    \end{enumerate}
    This verifies Hueristic \ref{cor:heur} by giving a holomorphic continuation of $\mathcal{Q}(s)$ to the region ${\rm Re}(s) > 1/3$, in particular showing that $\mathcal{Q}(s)$ has a removable singularity at $s=1/2$ with $\mathcal{Q}(1/2)\ne 0$.
\end{BOXexample}

In general, Frobenian Euler products will involve Artin $L$-functions in their factorizations. This is useful because we know more about the analytic properties of Artin $L$-functions than for other Dirichlet series. The reader may find the following facts useful:
\begin{itemize}
    \item Artin $L$-functions converge absolutely on the region ${\rm Re}(s) > 1$.
    \item If $\rho\ne 1$ is irreducible, then $L(s,\rho)$ has an analytic continuation to an open neighborhood of ${\rm Re}(s) \ge 1$ with $L(1,\rho)\ne 0$.
    \item Artin $L$-functions have a meromorphic continuation to the entire complex plane, following from Brauer's theorem on induced characters and the fact below on Hecke $L$-functions.
    \item Dirichlet (respectively Hecke) $L$-functions for nontrivial irreducible Dirichlet (respectively Hecke) characters are entire functions.
\end{itemize}
The reader may consult an analytic number theory textbook, such as \cite{iwaniec-kowalski2004}, for further information.

The \textbf{Artin Conjecture} predicts that $L(s,\rho)$ is entire for any nontrivial irreducible representation $\rho$, however this is currently unproven. For extra clean results on meromorphic continuations one may wish to assume the Artin conjecture, although it will not be necessary for the Factorization Method.

\section{Further Applications of the Factorization Method}

In this section we consider three other variations of Euler products that the reader may encounter. The Factorization Method can be applied to all of these types of Euler products, and many more beyond what we consider in this paper.

\subsection{Euler Products over Number Fields}

Euler products can also be considered over primes of fields other than $\Q$. In this case, the Riemann zeta function can be replaced with the Dedekind zeta function, Dirichlet $L$-functions with Hecke $L$-functions, and so on.

In the case of Frobenian Euler products, this will always work out because of the following:
\begin{theorem}\label{thm:frob_K_to_Q}
Let $K/\Q$ be a finite extension. If $\mathfrak{p}\mapsto H_\mathfrak{p}(z)$ is Frobenian over $K$, then
\[
p\mapsto h_p(z) := \prod_{\mathfrak{p}\mid p} H_{\mathfrak{p}}(z^{f_{\mathfrak{p}}})
\]
is Frobenian over $\Q$.
\end{theorem}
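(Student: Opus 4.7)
The plan is to replace the field of definition of the Frobenian structure by the Galois closure of $E$ over $\Q$, and then show that the decomposition of $p$ in $K$ together with all the relevant local Frobenius data is encoded in the Frobenius conjugacy class in this larger Galois group.

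More concretely, let $E/K$ and $S$ be the finite extension and finite exceptional set of places witnessing that $\mathfrak{p}\mapsto H_\mathfrak{p}(z)$ is Frobenian over $K$, with class function $c:\Gal(E/K)\to A$. First I would let $F$ be the Galois closure of $E$ over $\Q$, and set $G=\Gal(F/\Q)$, $H=\Gal(F/K)$, and $N=\Gal(F/E)$; note that $N\trianglelefteq H$ because $E/K$ is Galois, and $H/N\cong \Gal(E/K)$. I would then enlarge $S$ to a finite set $S_\Q$ of rational primes consisting of those ramified in $F/\Q$ together with those lying below a prime of $K$ in $S$. For $p\notin S_\Q$, I would fix a prime $\mathfrak{P}$ of $F$ above $p$ with decomposition group $D=\langle\sigma\rangle$, where $\sigma=\Fr_\mathfrak{P}$, so that a different choice of $\mathfrak{P}$ replaces $\sigma$ by a $G$-conjugate.

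Next I would invoke the standard dictionary: the primes $\mathfrak{p}$ of $K$ above $p$ correspond to the double cosets in $H\backslash G/D$, and for the double coset $HgD$ the residue degree is $f_\mathfrak{p}=[D:D\cap g^{-1}Hg]$, while the Frobenius $\Fr_\mathfrak{p}\in \Gal(E/K)=H/N$ is the class of $g\sigma^{f_\mathfrak{p}}g^{-1}\in H$ reduced mod $N$. With this in hand, I would define
\[
c'(\sigma) \;=\; \prod_{HgD\,\in\,H\backslash G/D}\, c\bigl(g\sigma^{f_\mathfrak{p}}g^{-1}N\bigr)\bigl(z^{f_\mathfrak{p}}\bigr),
\]
and verify two things: that the product is independent of the choices of representatives $g$ (handled by $c$ being a class function on $\Gal(E/K)$, which absorbs the ambiguity coming from replacing $g$ by $hgd$ with $h\in H$, $d\in D$), and that $c'$ is a class function on $G$ (handled by noting that replacing $\sigma$ with $\tau\sigma\tau^{-1}$ reindexes the double cosets via $gD\mapsto \tau gD$, leaving the product unchanged).

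Finally, I would check commutativity of the defining Frobenian diagram over $\Q$: for $p\notin S_\Q$, $c'(\Fr_p)$ equals $h_p(z)=\prod_{\mathfrak{p}\mid p}H_\mathfrak{p}(z^{f_\mathfrak{p}})$ by construction, using the Frobenian identity $H_\mathfrak{p}(w)=c(\Fr_\mathfrak{p})(w)$ for the primes $\mathfrak{p}\notin S$. The main obstacle I anticipate is purely bookkeeping: tracking exactly how the double-coset/Frobenius dictionary behaves under conjugation and under the projection $H\to H/N$, so that the value of $c'(\sigma)$ genuinely depends only on the $G$-conjugacy class of $\sigma$ and recovers $h_p$ on the nose. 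Once those compatibilities are verified, Frobenianness over $\Q$ is immediate with witnessing data $(F/\Q,\,S_\Q,\,c')$.
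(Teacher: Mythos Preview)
Your approach is correct and rests on the same underlying idea as the paper's: the Frobenius conjugacy class of $p$ in a suitable Galois group over $\Q$ determines the multiset of pairs $(f_\mathfrak{p},\Fr_\mathfrak{p})$ for $\mathfrak{p}\mid p$, and hence determines $h_p(z)$. The paper's proof is much terser. It simply asserts that the Artin symbol $\Leg{E/\Q}{p}$ determines $f_\mathfrak{p}$ (as the order of its image in $\Gal(K/\Q)$) and that $\Leg{E/K}{\mathfrak{p}}=\Leg{E/\Q}{p}^{f_\mathfrak{p}}$, and concludes. This tacitly treats $E/\Q$ and $K/\Q$ as Galois; otherwise neither $\Leg{E/\Q}{p}$ nor $\Gal(K/\Q)$ makes literal sense. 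You instead pass explicitly to the Galois closure $F$, invoke the double-coset description of primes of $K$ over $p$, and build the class function $c'$ by hand, checking well-definedness and conjugation invariance. That is the rigorous way to handle the general (non-Galois) case, and it is exactly the computation the paper's one-line argument is gesturing at. One small bookkeeping correction: when you replace $\sigma$ by $\tau\sigma\tau^{-1}$, the induced bijection on double cosets is $HgD\mapsto Hg\tau^{-1}D'$ with $D'=\tau D\tau^{-1}$ (right translation by $\tau^{-1}$), not $gD\mapsto\tau gD$; with that adjustment your invariance check goes through as you describe.
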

By shifting perspective back down to $\Q$, we can re-use meromorphic continuation results, such as Moroz's results \cite{moroz_1988}, for Euler products over rational primes. From a theoretical perspective, there is no distinction.

From a practical perspective, it is often convenient to take the Euler product over the largest field possible. This is because the Dedekind zeta functions group together a lot of disparate $L$-functions. This can be used to state decompositions more concisely, and also as a shortcut for confirming holomorphicity on larger regions without needing to assume the Artin Conjecture.

\begin{BOXexample}
Consider the Euler product
\[
\prod_{\mathfrak{p}} (1 + 2|\mathfrak{p}|^{-s})
\]
over the finite places of $\Q(\sqrt{-3})/\Q$. Here, $|\mathfrak{p}|$ denotes the norm of $\mathfrak{p}$ down to $\Q$.

On the one hand, this decomposes into a Frobenian Euler product over $\Q$
\[
\prod_{p\equiv 0,1 \mod 3} (1 + 2p^{-s})^2\prod_{p\equiv 2\mod 3} (1 + 2p^{-2s}).
\]
The reader can confirm that the rightmost pole is at $s=1$ of order $2$, verifying Heuristic \ref{heur}, by using the strategic Euler factors from Dirichlet $L$-functions as in Example \ref{ex:dirichlet_L_factor}.

On the other hand, we can also factor the Euler product by treating it as an Euler product over primes of $\Q(\sqrt{-3})$ with constant coefficients.
\begin{enumerate}[(1)]
    \item The minimum degree term is then $2|\mathfrak{p}|^{-s}$.
    \item We multiply top and bottom by the strategic factor $(1-|\mathfrak{p}|^{-s})^2$ in analogy with Example \ref{ex:intro_expanded}.
    \item This produces a decomposition
    \[
    \zeta_{\Q(\sqrt{-3})}(s)^2 \prod_{\mathfrak{p}}(1 - 3|\mathfrak{p}|^{-2s} + 2|\mathfrak{p}|^{-3s}).
    \]
    \item The remaining Euler product converges absolutely on the region ${\rm Re}(s) > 1/2$ by an argument analogous to Example \ref{ex:intro_abs_conv}.
\end{enumerate}
Thus, we see the pole at $s=1$ of order $2$ was inherited from the Dedekind zeta function $\zeta_{\Q(\sqrt{-3})}(s)^2$.
\end{BOXexample}

\begin{proof}[Proof of Theorem \ref{thm:frob_K_to_Q}]
Let $E/K$ be the finite extension for which $\mathfrak{p}\mapsto H_{\mathfrak{p}}(z)$ factors through the map $\mathfrak{p} \mapsto \Leg{E/K}{\mathfrak{p}}$.

We first check that the Euler products are even equal. Let $|\mathfrak{p}|$ denote that norm of $\mathfrak{p}$ down to $\Q$, so that
\begin{align*}
    \prod_p h_p(p^{-s}) &= \prod_p \prod_{\mathfrak{p}\mid p}H_\mathfrak{p}(p^{-f_{\mathfrak{p}}s})\\
    &= \prod_p \prod_{\mathfrak{p}\mid p}H_\mathfrak{p}(|\mathfrak{p}|^{-s})\\
    &=\prod_{\mathfrak{p}}H_\mathfrak{p}(|\mathfrak{p}|^{-s}).
\end{align*}
Next, we confirm that $p\mapsto h_p(z)$ is Frobenian. Indeed, consider that $\Leg{E/\Q}{p}$ determines the sequence of pairs $f_{\mathfrak{p}}, \Leg{E/K}{\mathfrak{p}}$. The inertia degree is given by the order of the image of $\Leg{E/\Q}{p}$ in $\Gal(K/\Q)$. Meanwhile, $\Leg{E/K}{\mathfrak{p}} = \Leg{E/\Q}{p}^{f_{\mathfrak{p}}}$. As these pairs determine $h_p(z)$, we have proven it is Frobenian over $E/\Q$.
\end{proof}

\subsection{An Example with Growing Coefficients}
While our focus for this paper is on Euler products with constant or Frobenian coefficients, the Factorization Method will work for more general Euler products. These, however, need not be ``reasonable" in the context of Heuristic \ref{heur}, and the heuristic will often fail.

\begin{BOXexample}\label{ex:growing_coef}
Consider the Dirichlet series
\[
\mathcal{Q}(s) = \prod_p (1 + p^{-s} + (p+1)p^{-2s}).
\]
The lowest degree term appears to be $p^{-s}$, so we can take $a=1$ and $b=1$. Thus, Heuristic \ref{heur} suggests that its rightmost singularity is a simple pole at $s=1$.

However, we can demonstrate that this is not true. Performing steps (2) and (3) of the Factorization Method by multiplying top and bottom by $1-p^{-s}$, we factor this series as
\[
\mathcal{Q}(s) = \zeta(s) \prod_p (1 + p^{1-2s} - (p+1)p^{-3s}).
\]
However, the remaining Euler product still diverges at $s=1$. We repeat this process for another strategic Euler factor $1-p^{1-2s}$, which implies
\[
\mathcal{Q}(s) = \zeta(s)\zeta(2s-1)\prod_p (1 - (p+1)p^{-3s}- p^{2-4s} - (p+1)p^{1-5s}).
\]
The right hand Euler product converges absolutely for ${\rm Re}(s) > 3/4$. Meanwhile, \emph{both} $\zeta(s)$ and $\zeta(2s-1)$ have a simple pole at $s=1$, implying that $\mathcal{Q}(s)$ has a pole of order $2$ at $s=1$.
\end{BOXexample}

It is certainly possible to generalize Heuristic \ref{heur} to include Euler products whose coefficients are polynomial in $p$, like Example \ref{ex:growing_coef}, where we might require that both $p^{-s}$ and $p^{1-2s}$ be considered ``minimum degree terms". We would need to give a more general description of what we mean by a ``term", how to define the degree of a term, and how to choose a corresponding strategic Euler factor. However, such Euler products can be much more complicated. See Section \ref{sec:natural_boundary} for some references and a brief discussion on the natural boundary of such Euler products, which is still not known in every case.

\subsection{Additive Factorizations}

While the Factorization Method has primarily been described with multiplicative language, a lot of structure can be seen by first taken the logarithm and then decomposing the resulting infinite sums.

\begin{BOXexample}
Consider the Euler product
\[
\mathcal{Q}(s) = \prod_p \exp(b_pp^{-s}) = \prod_p \left( 1 + b_p p^{-s} + \frac{b_p}{2}p^{-2s} + \cdots \right).
\]
We claim that if the average value of $b_p$ exists \emph{with a power savings}, then Heuristic \ref{heur} is true for this Euler product. By a power savings, we mean that
\[
\sum_{p\le x} b_p = b\pi(x) + O(x^{1-\delta})
\]
for some $\delta > 0$.

Rather than multiplying top and bottom by some strategic factor, we take the log
\begin{align*}
\log\mathcal{Q}(s) &= \sum_p b_p p^{-s},
\end{align*}
then add and subtract a strategic factor:
\begin{align*}
    \sum_p bp^{-s} + \sum_p (b_p-b)p^{-s}
\end{align*}
The first piece will contribute the singularity as in step (3). Consider that
\[
\exp\left(\sum_p b p^{-s}\right) = \prod_p \exp(bp^{-s}) = \prod_p\left(1 + bp^{-s} + \frac{b^2}{2}p^{-2s} + \cdots\right).
\]
Applying the Factorization Method by multiplying top and bottom by $(1-p^{-s})^b$, we can factor this series as $\zeta(s)^b$ times an Euler product absolutely convergent on ${\rm Re}(s) > 1/2$. We leave the details as an exercise to the reader, as they are similar to the previously considered examples with constant coefficients. Thus, this term contributes a singularity at $s=1$ of order $b$.

The second piece can be bounded as in step (4) using the power savings. We rewrite the series as an integral using a trick classically applied to the Riemann zeta function (called a Mellin transform):
\begin{align*}
    \left\lvert\sum_{p} (b_p -b)p^{-s}\right\rvert &= \left\lvert\sum_{p} (b_p -b)(-s)\int_p^{\infty} x^{-s-1}\ dx\right\rvert\\
    &=\left\lvert s\int_1^\infty \left(\sum_{p\le x} (b_p - b)\right)x^{-s-1}\ dx\right\rvert\\
    &\le|s|\int_1^\infty x^{1-\delta}x^{-{\rm Re}(s)-1}\ dx.
\end{align*}
This integral converges if and only if $1-\delta -{\rm Re}(s)-1 < -1$, or equivalently ${\rm Re}(s) > 1-\delta$. By the comparison theorem, the second piece converges in this region. Consequently, the series
\[
\exp\left( \sum_p (b_p - b) p^{-s}\right) = \prod_p \exp((b_p - b)p^{-s})
\]
is holomorphic \emph{and nonzero} on the region ${\rm Re}(s)>1-\delta$, as the exponential function has no zeros.

Thus, $\mathcal{Q}(s)$ has an analytic continuation to a branch cut of an open neighborhood of ${\rm Re}(s) \ge 1$, with a singularity at $s=1$ of order $b$.
\end{BOXexample}

The use of Mellin transforms is an important trick in arithmetic statistics, and plays a major role in the Selberg--Delange method. The reader may find the general formula useful:
\[
n^{-s} = -s\int_n^{\infty} x^{-s-1}\ dx,
\]
where the integral converges absolutely on the region ${\rm Re}(s) >0$.

\subsection{Further References} It is not possible to adequately treat every useful case of the factorization method. Generally speaking, anytime you have a meromorphic function with an Euler product, those Euler factors can be used as ``strategic factors" in step (2) of the Factorization Method.

We acknowledge a couple more scenarios below where this happens.

\begin{itemize}
    \item Any reasonable $L$-function in the sense of \cite[Section 5.1]{iwaniec-kowalski2004} has a meromorphic continuation and Euler product, and so can be used to give strategic Euler factors in step (2). These include many arithmetically $L$-functions discussed above such as Dirichlet $L$-functions, Hecke $L$-functions, and (conjecturally) Artin $L$-functions, but also include automorphic $L$-functions and the $L$-functions attached to varieties (conjecturally automorphic).

    \item Any multiple Dirichlet series, that is a Dirichlet series of more than one complex variable, which has an Euler product can also be used to give strategic factors in step (2). Generally we would want to consider different variables as having equal weight in step (1), so an Euler product $\prod_p (1+p^{-s} + p^{-w})$ has minimum degree terms $p^{-s}$ and $p^{-w}$. For some examples, see \cite{bhowmik_essouabri_lichtin_2007,delabarre_2013,delabarre2014}.
\end{itemize}

\section{Iterating the Factorization Method}\label{sec:iterating_factorization_method}

The rightmost singularity predicted by Heuristic \ref{heur} is not the end of the Factorization Method's usefulness. It may have occurred to the reader that the remaining Euler product could be analytically continued \emph{again} by repeating the Factorization Method.

\begin{BOXexample}
    Let's revist the Euler product
    \[
    \prod_p (1 + 2p^{-s}) = \zeta(s)^2 \prod_p(1 - 3p^{-2s} + 2p^{-3s})
    \]
    from Example \ref{ex:intro}. We previously showed that this function has a meromorphic continuation to the region ${\rm Re}(s) > 1/2$ with a pole at $s=1$ of order $2$.

    We can beat that result by applying the Factorization Method to the remaining Euler product. By multiplying top and bottom by $(1 - p^{-2s})^{-3}$, we compute
    \begin{align*}
        \prod_p (1 + 2p^{-s}) &= \zeta(s)^2\zeta(2s)^{-3}\prod_p\left(1 + O(p^{-3s})\right).
    \end{align*}
    Theorem \ref{thm:abs_conv} implies that the remaining Euler product converges absolutely on the region ${\rm Re}(s) > 1/3$.

    We have now shown that $\mathcal{Q}(s)$ has a meromorphic continuation to the larger region ${\rm Re}(s) > 1/3$ with a pole of order $2$ at $s=1$, and poles at any of the nontrivial zeroes of $\zeta(2s)$ that may land in this region. The Riemann Hypothesis predicts that the nontrivial zeroes of $\zeta(2s)$ all have real part $1/4 < 1/3$, but the Riemann Hypothesis is still unproven at this time.
\end{BOXexample}

Seeing this example, you might be thinking ``What if I did the Factorization Method again? and \emph{again}? and \textbf{\emph{again}}? Can we do better?" Congratulations, you just described the method used by Estermann \cite{estermann_1928}, Dahlquist \cite{dahlquist_1952}, and others \cite{kurokawa_1978,kurokawa_1986I,kurokawa_1986II,moroz_1988} to meromorphically continue Euler products to their natural boundary!

Indeed, it turns out that the Factorization Method can be applied countably many times to produce a \textbf{zeta product}. For the Euler product in Example \ref{ex:intro}, this looks like
\[
\prod_p (1 + 2p^{-s}) = \prod_{n=1}^{\infty} \zeta(ns)^{b_n}
\]
for some sequence of numbers $b_n$. The first example of such a zeta product appears in \cite{landau_walfisz_1920} for the Dirichlet series $\exp(\sum p^{-s})$. What is remarkable is that the tail of the zeta product
\[
\prod_{n\ge N} \zeta(ns)^{b_n}
\]
converges absolutely on the region ${\rm Re}(s) > 1/N$. Thus, taking a limit as the tail gets smaller, the zeta product produces a continuation to the region ${\rm Re}(s) > 0$ away from a set of isolated singularities which occur either at $s=1/n$ or $s=$ a zero of $\zeta(ns)$. These singularities are poles precisely when the corresponding number $b_n$ is a integer.

Part \ref{part:existence} of this paper concerns the existence of this type of zeta product, using it to produce a meromorphic continuation. Part \ref{part:explicit} of this paper concerns the sequence $b_n$ of exponents, where we give explicit results so that the orders of any singularities or poles can be easily computed.

\part{Existence of Analytic Continuations}\label{part:existence}

\section{History of Analytic Continuations using the Factorization Method}\label{sec:history}

\subsection{Euler Products with Constant Coefficients}\label{subsec:histroy_const_coef}
The first general result applying the Factorization Method is due to Estermann \cite{estermann_1928}.

\begin{theorem}[{Estemann \cite{estermann_1928}}]\label{thm:estermann}
    Let $h(z) \in 1 + z\Z[z]$ be a polynomial with integer coefficients. Then the Euler product
    \[
    \prod_p h(p^{-s})^{-1}
    \]
    has a meromorphic continuation to the right half-plane ${\rm Re}(s)>0$.

    If the roots of $h(z)$ all have magnitude $1$ then this Euler product can be meromorphically continued to $\C$, otherwise ${\rm Re}(s) = 0$ is the natural boundary of this function.
\end{theorem}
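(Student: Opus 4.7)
The plan is to iterate the Factorization Method infinitely many times, packaging every iteration into a single ``zeta product'' representation, following Estermann. Factor $h(z) = \prod_{i=1}^d(1-\alpha_i z)$ over $\C$; since the reverse polynomial $z^d h(1/z) = \prod_i(z-\alpha_i)$ is monic with integer coefficients, the inverse roots $\alpha_i$ are algebraic integers. Set $p_k := \sum_i \alpha_i^k$ and look for a factorization
\[
h(z)^{-1} \;=\; \prod_{n=1}^{\infty}(1-z^n)^{-b_n}.
\]
Matching coefficients of logarithms and applying M\"obius inversion gives the closed form $b_n = \tfrac{1}{n}\sum_{k\mid n}\mu(n/k)\,p_k$. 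Integrality $b_n\in\Z$ follows from the classical Witt/necklace identity (equivalently, the congruence $\sum_{k\mid n}\mu(n/k)\,{\rm tr}(A^k) \equiv 0\pmod n$ for the integer companion matrix $A$ of the reverse polynomial). The trivial bound $|p_k|\le d|\alpha|^k$ with $|\alpha|:=\max_i|\alpha_i|$ then gives $|b_n|\ll |\alpha|^n/n$.

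Substituting $z=p^{-s}$ and taking a product over primes suggests the formal identity $\prod_p h(p^{-s})^{-1} = \prod_n \zeta(ns)^{b_n}$. To turn this into a meromorphic continuation, I would truncate at each level $N$ and decompose
\[
\prod_p h(p^{-s})^{-1} \;=\; \prod_{n<N}\zeta(ns)^{b_n}\cdot \prod_p R_{p,N}(s), \qquad R_{p,N}(s):=h(p^{-s})^{-1}\prod_{n<N}(1-p^{-ns})^{b_n}.
\]
Each local factor $R_{p,N}(s)$ is a rational function of $p^{-s}$ and is therefore meromorphic on $\C$. The key estimate is that on any compact subset of $\{{\rm Re}(s)>1/N\}$, one can choose a threshold $P_0$ so that for $p\ge P_0$ the local identity $R_{p,N}(s) = \prod_{n\ge N}(1-p^{-ns})^{-b_n}$ holds and $|\log R_{p,N}(s)| \ll |\alpha|^N p^{-N{\rm Re}(s)}$, by combining the geometric-series bound with the estimate on $|b_n|$. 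Summing over primes, $|\alpha|^N\sum_{p\ge P_0} p^{-N{\rm Re}(s)}$ converges precisely because $N{\rm Re}(s)>1$, so $\prod_{p\ge P_0}R_{p,N}(s)$ is holomorphic there. The finitely many primes $p<P_0$ contribute an explicit meromorphic factor on $\C$. Letting $N\to\infty$ produces meromorphic continuation to every half-plane $\{{\rm Re}(s)>1/N\}$, and hence to $\{{\rm Re}(s)>0\}$.

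For the second half of the theorem, suppose first that $|\alpha_i|=1$ for every $i$. Kronecker's theorem then forces each $\alpha_i$ to be a root of unity, so $p_k$ is periodic in $k$. A direct argument---e.g., factoring $h$ into reciprocal cyclotomic polynomials $\tilde\Phi_d(z)$ with $d\mid M:={\rm lcm}(\text{orders of }\alpha_i)$ and inverting the relation $e_d = \sum_{m\ge 1}b_{dm}$---shows that $b_n$ is supported on $n\mid M$. The zeta product thus collapses to a \emph{finite} product $\prod_{n\mid M}\zeta(ns)^{b_n}$, manifestly meromorphic on all of $\C$.

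The converse---that ${\rm Re}(s)=0$ is a natural boundary when some $|\alpha_i|>1$---is the part I expect to be the main obstacle. In this regime $|b_n|\gg |\alpha|^n/n$ along a subsequence (coming from the dominant term $p_n\sim \alpha_1^n$ in the M\"obius sum when $\alpha_1$ is the unique inverse root of maximal modulus), so the factors $\zeta(ns)^{b_n}$ carry poles of unbounded order at the points $s=1/n$, and these accumulate at $s=0$. No meromorphic function can carry poles of unbounded order accumulating at an interior point of its domain, which rules out any meromorphic extension past ${\rm Re}(s)=0$. The delicate step is verifying that these unbounded-order singularities are genuinely visible in the full Euler product---i.e., that they are not cancelled by the finitely many bad-prime factors at each truncation or by incidental vanishing among neighboring zeta factors---which requires careful asymptotic analysis of $b_n$ and, in the case where several inverse roots share the maximal modulus, a density argument across the level-$N$ truncations.
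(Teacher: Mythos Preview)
Your construction of the meromorphic continuation to ${\rm Re}(s)>0$ matches the paper's approach: both produce the zeta product $\prod_n \zeta(ns)^{b_n}$, truncate at level $N$, separate out finitely many bad primes, and verify absolute convergence of the tail on ${\rm Re}(s)>1/N$. Your proof that $b_n\in\Z$ via the Witt/necklace congruence is different from the paper's route, which deduces integrality from a combinatorial ``log factorization theorem'' (Theorem~\ref{thm:log}) expressing $-\log(1-\sum x_i)$ as a nonnegative-integer combination of $-\log(1-\mathbf{x}^{\mathbf{n}})$. Both arguments are correct; yours is more direct for the polynomial case, while the paper's has the advantage of generalizing cleanly to the Frobenian setting.

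There is, however, a genuine gap in your natural-boundary argument. Showing that the poles at $s=1/n$ have orders $|b_n|\to\infty$ and accumulate at $s=0$ only prevents meromorphic extension to a neighborhood of the single point $s=0$. It does \emph{not} rule out extension across ${\rm Re}(s)=0$ near, say, $s=i\pi$: a priori one could still continue $\mathcal{Q}(s)$ into a disk centered at some $it_0$ with $t_0\ne 0$. Proving that ${\rm Re}(s)=0$ is the natural boundary requires showing that \emph{every} point $it_0$ on the imaginary axis is an accumulation point of zeros or singularities. The mechanism for this, in Estermann's proof and in the paper's sketch (Section~\ref{sec:natural_boundary}), is the \emph{nontrivial zeros} of $\zeta(ns)$, not its pole: these zeros lie in the strip $0<{\rm Re}(s)<1/n$ with imaginary parts ranging over all heights, and as $n\to\infty$ they cluster against every point of the imaginary axis. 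The hard work is then showing that for infinitely many $n$ with $b_n\ne 0$ these zeros are not all cancelled by zeros of the other factors $\zeta(ms)^{b_m}$, which requires quantitative input on the distribution of zeta zeros near the critical line. Your pole-accumulation argument does not touch this, and should be replaced by (or supplemented with) the zero-density argument.
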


The \textbf{natural boundary} is the boundary of the largest open set on which the function can be meromorphically continued. Estermann's proof of analytic continuation can be understood as iterating the Factorization Method infinitely many times to produce a zeta product
\[
    \prod_p h(p^{-s})^{-1} = \prod_{n=1}^\infty \zeta(ns)^{b_n}
\]
for some sequence $(b_n)$. All the roots of $h(z)$ having magnitude $1$ is equivalent to the sequence $(b_n)$ having at most finitely many nonzero terms, making this function a finite product of zeta functions and therefore meromorphic on $\C$. Otherwise, the sequence $(b_n)$ has infinitely many nonzero terms. In this case, Estermann then proves that every point on the ${\rm Re}(s)=0$ is the limit point of zeros or poles of $\zeta(ns)^{b_n}$ as $n$ varies from $1$ to $\infty$, making every point on this line an essential singularity.

Dahlquist \cite{dahlquist_1952} then expanded on Estermann's work to include non-polynomial Euler factors. The proof outline is essentially the same, where Dalhquist needs to separate out Euler factors that contribute singularities or zeros.

\begin{theorem}[{Dahlquist \cite{dahlquist_1952}}]\label{thm:dahlquist}
    Let $Q(z)$ be holomorphic on (a branch cut of) the complex plane, save for a set of isolated singularities, holomorphic at $z=0$ with $Q(0)=1$, and such that the closed unit ball does not contain a limit point of zeros of singularities of $Q(z)$. Then the Euler product
    \[
    \mathcal{Q}(s) = \prod_p Q(p^{-s})
    \]
    has an analytic continuation to (a branch cut of) the right halfplane ${\rm Re}(s) > 0$, save for a set of isolated singularities.

    If $Q(z)$ is not a polynomial whose roots all have magnitude $1$, then the line ${\rm Re}(s)=0$ is the natural boundary for $\mathcal{Q}(s)$ (up to branch cuts).
\end{theorem}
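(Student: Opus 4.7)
The plan is to follow the iterated Factorization Method described in Section \ref{sec:iterating_factorization_method}, producing a countable factorization of $\mathcal{Q}(s)$ as a product of zeta-function factors times a fast-converging remainder. The hypothesis that $\overline{D(0,1)}$ contains no limit point of zeros or singularities of $Q(z)$ means that only finitely many such points lie in the closed unit disk, and there exists some $r_0 > 1$ so that $Q$ is meromorphic on $|z| < r_0$ with exactly that same finite set of special points. Expanding $\log Q(z) = \sum_{k\ge 1} c_k z^k$ in a neighborhood of the origin and using the identity $\log(1-z^n)^{-1} = \sum_{m\ge 1} z^{nm}/m$ together with M\"obius inversion, I would define
\[
b_n = \frac{1}{n}\sum_{d\mid n}\mu(n/d)\,d\,c_d,
\]
which is the unique sequence for which $\log Q(z) - \sum_{n=1}^N b_n \log(1-z^n)^{-1} \in z^{N+1}\C[[z]]$ for every $N$. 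Setting $z = p^{-s}$ and taking products over primes then gives the partial factorization
\[
\mathcal{Q}(s) = \prod_{n=1}^N \zeta(ns)^{b_n}\cdot R_N(s), \qquad R_N(s) = \prod_p h_N(p^{-s}),
\]
where $h_N(z) := Q(z)\prod_{n=1}^N (1-z^n)^{b_n}$ satisfies $h_N(z) = 1 + O(z^{N+1})$ near the origin by construction.

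The core analytic task is to prove that $R_N(s)$ converges absolutely on $\{{\rm Re}(s) > 1/(N+1)\}$, away from the isolated singularities coming from the finitely many zeros and poles of $Q$ in $\overline{D(0,1)}$. By Cauchy's estimates applied to $h_N$ on a circle $|z| = r_0 - \epsilon$ avoiding the singular set of $Q$, I would derive a uniform bound $|h_N(z) - 1| \le C_N |z|^{N+1}$ for $|z|$ strictly smaller than $r_0 - \epsilon$ away from the special points. Theorem \ref{thm:abs_conv} then yields absolute convergence of $R_N(s)$ on ${\rm Re}(s) > 1/(N+1)$, up to the discrete family of vertical lines $\{s : p^{-s} = \gamma\}$ coming from each prime $p$ and each special point $\gamma \in \overline{D(0,1)}$ of $Q$. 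These vertical lines contribute precisely the isolated singular set in the statement. Letting $N\to\infty$ extends the continuation across all of ${\rm Re}(s) > 0$ (with suitable branch cuts when some $b_n \notin \Z$).

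For the natural-boundary claim, I would observe that $Q$ being a polynomial all of whose roots lie on the unit circle is equivalent to $h_N \equiv 1$ for all sufficiently large $N$, which forces $b_n = 0$ for $n$ large and leaves $Q$ with no zeros or singularities strictly inside the open unit disk. Otherwise, either infinitely many $b_n$ are nonzero---in which case the poles of $\zeta(ns)$ at $s = 1/n$ together with the nontrivial zeros of $\zeta(ns)$ on the critical line ${\rm Re}(s) = 1/(2n)$ accumulate densely on ${\rm Re}(s) = 0$ by the classical existence of zeta zeros at arbitrary imaginary height---or else $Q$ has some special point $\gamma$ in the open unit disk, whose associated vertical lines of isolated singularities ${\rm Re}(s) = -\log|\gamma|/\log p$ accumulate at ${\rm Re}(s) = 0$ as $p \to \infty$. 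In either case no continuation across ${\rm Re}(s) = 0$ is possible.

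The main obstacle is controlling the Taylor remainder constant $C_N$ uniformly enough as $N \to \infty$ for the convergence regions $\{{\rm Re}(s) > 1/(N+1)\}$ to glue coherently into a single meromorphic continuation on the full half-plane ${\rm Re}(s) > 0$, and for the finitely many special points of $Q$ in $\overline{D(0,1)}$ to contribute only isolated (rather than accumulating) singularities inside the open half-plane. For the natural-boundary half, the delicate ingredient is a quantitative density statement ensuring that the limit points of the singular set exhaust the entire line ${\rm Re}(s) = 0$ rather than just a proper subset; this is inherited from Estermann's argument for Theorem \ref{thm:estermann} and uses classical facts about the distribution of zeros of $\zeta$.
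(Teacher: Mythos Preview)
Your overall strategy---define $b_n$ by a formal basis change, factor out $\prod_{n\le N}\zeta(ns)^{b_n}$, and show the remaining Euler product converges on a larger halfplane---is the paper's approach (Theorem~\ref{thm:existence_const_coef}), and your M\"obius-inversion formula for $b_n$ is correct and equivalent to the paper's recursion.

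The gap is in the Cauchy-estimate step. You propose to bound $h_N$ on a circle $|z|=r_0-\epsilon$ with $r_0>1$, but this cannot work as stated: if $Q$ has a zero or singularity at some $\gamma$ with $|\gamma|<r_0-\epsilon$, then $h_N$ is not holomorphic on the full disk bounded by that circle, and Cauchy's inequality does not apply. Writing ``avoiding the singular set'' does not help---the integral formula needs holomorphy throughout the interior, not just on the contour. Furthermore, when some $b_n\notin\Z$ the factor $(1-z^n)^{b_n}$ already has branch points on $|z|=1$, so $h_N$ is not even single-valued on any disk of radius ${}>1$. The fix is to use a circle of radius $r$ strictly smaller than the least modulus of any special point of $Q$ (and than $1$); the bound $|h_N(z)-1|\le C_N|z|^{N+1}$ then holds only for $|z|<r$, hence only for primes with $p^{-{\rm Re}(s)}<r$. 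The finitely many smaller primes must be pulled out of the product and carried along as individual factors $Q(p^{-s})$, each contributing its own isolated singularities. This separation of primes by size is exactly how the paper organizes the argument in Theorem~\ref{thm:existence_const_coef}(ii).

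The obstacle you flag---uniformity of $C_N$ as $N\to\infty$ so that the regions glue---is not a real difficulty. For each fixed $N$ you obtain a continuation on $\{{\rm Re}(s)>1/(N+1)\}$ minus an isolated set; these halfplanes are nested and the continuations all agree with $\mathcal{Q}(s)$ on $\{{\rm Re}(s)>1\}$, so the identity principle glues them automatically with no uniformity in $N$ required. Your natural-boundary sketch is at the same level as the paper's (Section~\ref{sec:natural_boundary}), which likewise defers the quantitative density-of-zeros input to Estermann and Dahlquist's original papers.
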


We are careful to avoid the term meromorphic here, as Dahlquist's work necessarily allows for singularities of non-integral order. Dahlquist does not mention branch cuts in their theorem statements, although we know that any singularity of non-integral order requires a branch cut that prevents one from drawing a loop around the singularity. This is distinct from Estermann's original work, in which their Euler product is truly meromorphic on the region ${\rm Re}(s) > 0$.

We give self-contained proofs of the analytic continuations in Estermann's Theorem \ref{thm:estermann} and Dahlquist's Theorem \ref{thm:dahlquist}. The existence of an analytic continuation to a branch cut of ${\rm Re}(s) > 0$ is proven in Theorem \ref{thm:existence_const_coef}. In the context of Estermann's theorem, it is a consequence of Theorem \ref{thm:explicit_singularities_const_coef} that all of the singularities are poles so that no branch cut is necessary. We summarize the main idea behind the proof that ${\rm Re}(s)=0$ is the natural boundary in Section \ref{sec:natural_boundary}, though we leave many of the details to the original papers.

\subsection{Frobenian Euler Products}\label{subsec:history_Frob_coef}
Results of similar strength are true for Frobenian Euler products. Kurokawa \cite{kurokawa_1978,kurokawa_1986I,kurokawa_1986II} and Moroz \cite{moroz_1988} prove an analogous statement to Estermann's result, with Moroz's proof being a streamlined version of Kurokawa's work in \cite{kurokawa_1978}.

\begin{theorem}[{Kuokawa \cite{kurokawa_1978}, Moroz \cite{moroz_1988}}]\label{thm:km}
    Let $h_p(z) \in 1 + z R(\Gal(E/\Q))[z]$ be a polynomial with coefficients $a_n\in R(\Gal(E/\Q))$ in the ring of virtual characters, that is the ring of integer linear combinations of irreducible characters of $E/\Q$. Then the Euler product
    \[
    \prod_p h_p(p^{-s})^{-1} = \prod_p \left(1 + \sum a_n(\Fr_p(E/\Q)) p^{-ns}\right)^{-1}
    \]
    has a meromorphic continuation to the right half-plane ${\rm Re}(s)>0$.

    If the roots of $h_p(z)$ is unitary then this Euler product can be meromorphically continued to $\C$, otherwise ${\rm Re}(s) = 0$ is the natural boundary of this function.
\end{theorem}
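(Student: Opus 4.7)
The plan is to adapt Estermann's iterated Factorization Method (Theorem \ref{thm:estermann}) to the Frobenian setting, replacing powers of the Riemann zeta function with Artin $L$-functions. The target decomposition is
\[
\prod_p h_p(p^{-s})^{-1} = \prod_{n=1}^{\infty} \prod_{\rho} L(ns,\rho)^{b_{n,\rho}},
\]
where $\rho$ ranges over the irreducible representations of $\Gal(E/\Q)$, the exponents $b_{n,\rho} \in \Z$, and the tail $\prod_{n \geq N}$ converges absolutely on ${\rm Re}(s) > 1/N$.

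First I would expand $h_p(p^{-s})^{-1} = 1 + \sum_{m\geq 1} A_m(\Fr_p)p^{-ms}$ as a power series in $p^{-s}$ and, at each iteration, decompose the current lowest-degree nonzero coefficient $A_{m_0} = \sum_\rho c_\rho \chi_\rho$ into irreducible characters. The strategic Euler factor is $\prod_\rho \det(I - \rho(\Fr_p)p^{-m_0 s})^{c_\rho}$, which at order $p^{-m_0 s}$ contributes exactly $-A_{m_0}(\Fr_p)$ and thus cancels the leading term of $h_p^{-1}$ when multiplied in. Pulling the reciprocal of this factor out of the product yields $\prod_\rho L(m_0 s,\rho)^{c_\rho}$, leaving a residual Euler product whose expansion begins at strictly higher degree. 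Iterating indefinitely produces the infinite $L$-product. Uniformity of the bounds $|A_m(\Fr_p)|$ in $p$, which follows from $p\mapsto a_n(\Fr_p)$ being a class function on a finite group, ensures via Theorem \ref{thm:abs_conv} that the tail Euler products converge absolutely on the expected regions.

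For meromorphic continuation to ${\rm Re}(s) > 0$, I would truncate the infinite product at level $N$, invoking Brauer's induction theorem together with the meromorphy of Hecke $L$-functions to conclude that each Artin $L$-function is meromorphic on $\C$. The finite head is then meromorphic on all of $\C$, while the tail is holomorphic on ${\rm Re}(s) > 1/N$. Letting $N \to \infty$ produces a meromorphic continuation to ${\rm Re}(s) > 0$, with isolated singularities at $s = 1/n$ or at zeros of the relevant Artin $L$-functions within this region.

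The main obstacle is twofold. First, one must verify that the exponents $b_{n,\rho}$ are actually integers, so that the result is genuinely meromorphic rather than merely analytic with branch-point singularities; the naive expansion of $\log\det(I-\rho(\Fr_p)p^{-m_0s})^{c_\rho}$ only tracks the leading character, not the higher-order ones that interfere with subsequent iterations. This is precisely the content invoked by Theorem \ref{thm:log_trchar}: its identity expressing $1 + {\rm tr}\rho(g)z$ as an infinite product of characteristic polynomials with integer exponents delivers an integral factorization at each iteration of the method, and hence integer $b_{n,\rho}$ throughout. Second, for the natural boundary dichotomy, I would argue as in Estermann: if every root of each $h_p(z)$ has magnitude one, only finitely many $b_{n,\rho}$ are nonzero, reducing the product to a finite meromorphic expression on $\C$; otherwise the zeros and poles of the factors $L(ns,\rho)^{b_{n,\rho}}$ accumulate densely along ${\rm Re}(s) = 0$ as $n$ varies, producing essential singularities at every point of that line via the standard limit-point-of-singularities argument.
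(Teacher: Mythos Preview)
Your proposal is correct and follows essentially the same route as the paper: factor the Euler product as an infinite product $\prod_{n,\rho} L(ns,\rho)^{b_{n,\rho}}$ of Artin $L$-functions, show the tail beyond $N$ converges absolutely on ${\rm Re}(s)>1/N$, and invoke Brauer's theorem for meromorphy of each finite head. The paper packages the decomposition as a single Schauder basis computation (Proposition~\ref{prop:basis_Frob} feeding Theorem~\ref{thm:existence_Frob_coef}) rather than an iteration, but this is only a presentational difference that the paper itself acknowledges in Section~\ref{sec:iterating_factorization_method}.

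One remark: in your iterative framing the integrality of the $b_{n,\rho}$ is actually more transparent than you suggest, and you do not strictly need Theorem~\ref{thm:log_trchar}. At each step the residual Euler factor is a product of $h_p(z)^{-1}$ with finitely many factors $\det(I-\rho(\Fr_p)z^{m_0})^{\pm c_\rho}$; since the coefficients of $\det(I-\rho(g)z)^{\pm 1}$ are characters of exterior (respectively symmetric) powers of $\rho$, the residual still lies in $1+zR(\Gal(E/\Q),\Z)[[z]]$, so the next decomposition again yields integer $c_\rho$. The paper's proof of integrality (Theorem~\ref{thm:explicit_singularities_Frob_coef}(ii)) instead works from the all-at-once basis expansion and therefore genuinely needs both Theorem~\ref{thm:log} and Theorem~\ref{thm:log_trchar}; your iterative approach buys you a shortcut here that you did not take.
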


Just like Estermann, Kurokawa and Moroz prove this result by applying the Factorization Method to relate the Euler product to a zeta product
\[
    \prod_n \prod_\rho L(ns,\rho)^{b_{n,\rho}}
\]
for some sequence $(b_{n,\rho})$, up to the ramified places of the Galois representations $\rho$. This product is finite if and only if $h_p(z)$ is of the form $\det(I - \psi(\Fr_p)z)$ for some finite dimensional Galois representation $\psi$ and almost all places $p$, which is how Kurakawa and Moroz define \textbf{unitary}.

We also give a self-contained proof of the meromorphic continuations in Theorem \ref{thm:km}. The existence of an analytic continuation to a branch cut of ${\rm Re}(s) > 0$ is proven in Theorem \ref{thm:existence_Frob_coef}, and it is a consequence of Theorem \ref{thm:explicit_singularities_Frob_coef} that all of the singularities are poles so that no branch cut is necessary. We again summarize the main idea behind the proof that ${\rm Re}(s)=0$ is the natural boundary in Section \ref{sec:natural_boundary}, though we leave many of the details to the original papers.

The author was unable to find a result for general Frobenian Euler products analogous to Dahlquist's Theorem \ref{thm:dahlquist} in the literature, although a proof is readily constructed by combining key features of Dahlquist's and Moroz's work. The self-contained proof of Theorem \ref{thm:existence_Frob_coef} we present readily implies the following result:

\begin{theorem}\label{thm:frob_dahlquist}
    Let $p\mapsto Q_p(z)$ be a Frobenian map over a finite extension $E/\Q$, where each function $Q_p(z)$ is holomorphic on (a branch cut of) the open unit disk, save for a set of isolated singularities, and is holomorphic at $z=0$ with $Q(0)=1$. Then the Euler product
    \[
    \mathcal{Q}(s) = \prod_p Q_p(p^{-s})
    \]
    has an analytic continuation to (a branch cut of) the right halfplane ${\rm Re}(s) > 0$, save for a set of isolated singularities.
\end{theorem}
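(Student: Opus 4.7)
The plan is to reduce Theorem \ref{thm:frob_dahlquist} to the polynomial Frobenian case proved in Theorem \ref{thm:existence_Frob_coef}, adapting Dahlquist's truncation trick (from the proof of Theorem \ref{thm:dahlquist}) to handle non-polynomial Euler factors. First I would observe that because $p\mapsto Q_p(z)$ is Frobenian, up to finitely many exceptional primes (set aside to contribute a fixed finite factor) there are only finitely many distinct functions $Q_p(z)$, one per conjugacy class in $\Gal(E/\Q)$. Each is holomorphic on a branch cut of the open unit disk away from isolated singularities, so I can choose a radius $0 < r < 1$ such that every $Q_p$ that arises is holomorphic on the closed disk $|z|\le r$. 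Cauchy's estimates then give a bound on the Taylor coefficients of $Q_p(z)$ about $z=0$ that is uniform in $p$.

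Second, for each integer $N\ge 1$ I would split the Taylor expansion as $Q_p(z) = P_{p,N}(z) + R_{p,N}(z)$, where $P_{p,N}(z)$ is the degree-$N$ truncation (a polynomial with Frobenian coefficients) and $R_{p,N}(z) = O(z^{N+1})$ with an implied constant uniform in $p$. Writing
\[
\prod_p Q_p(p^{-s}) = \prod_p P_{p,N}(p^{-s})\cdot\prod_p\left(1 + \frac{R_{p,N}(p^{-s})}{P_{p,N}(p^{-s})}\right),
\]
the first product has an analytic continuation to a branch cut of ${\rm Re}(s)>0$ away from isolated singularities by Theorem \ref{thm:existence_Frob_coef} applied to Frobenian polynomial Euler factors, while the second is an Euler product whose factors are $1 + O(p^{-(N+1){\rm Re}(s)})$; by Theorem \ref{thm:abs_conv} and a $p$-series comparison it converges absolutely on ${\rm Re}(s) > 1/(N+1)$. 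Letting $N\to\infty$ and checking compatibility between successive truncations pieces together an analytic continuation on the entire right half-plane, with the only possible singularities being those inherited from the polynomial pieces.

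The main obstacle will be the compatibility between different truncations together with controlling the accumulation of singularities as $N\to\infty$. On the overlap ${\rm Re}(s) > 1/(N+1)$, two truncations $N' > N$ must give the same continuation, which follows by the identity theorem once both decompositions are shown to agree on their region of common absolute convergence. Once this is established, the singularity set is the union over $N$ of the singularities coming from the $N$-th polynomial continuation inside the strip ${\rm Re}(s) > 1/(N+1)$; by Theorem \ref{thm:existence_Frob_coef} these arise as zeros and poles of Artin $L$-functions $L(ns,\rho)$ at shifted arguments together with a discrete set of points of the form $s = -\log z_0/\log p$ for $z_0$ a zero or singularity of some $Q_p(z)$ with $|z_0|<1$, and both families are locally finite in any strip ${\rm Re}(s) > \epsilon > 0$.
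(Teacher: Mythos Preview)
Your proposal is correct but takes a different route from the paper. In the paper, Theorem~\ref{thm:existence_Frob_coef} is already stated and proven for arbitrary Frobenian $Q_p(z)$ holomorphic on (a branch cut of) the open unit disk away from isolated singularities---not only for polynomial factors---so Theorem~\ref{thm:frob_dahlquist} follows immediately by letting $\epsilon\to 0$ there. The paper's argument expands the full formal power series $\log Q_p(z)$ in the Schauder basis $\{-\log\det(I-\rho(\Fr_p)z^n)\}_{n,\rho}$ of Proposition~\ref{prop:basis_Frob} once and for all, reading off the coefficients $b_{n,\rho}$ and then factoring out finitely many $L(ns,\rho)^{b_{n,\rho}}$ together with finitely many bad Euler factors to push absolute convergence past any $\epsilon>0$. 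Your truncate-then-reduce strategy is valid and is closer in spirit to Dahlquist's original constant-coefficient argument; since the first $N$ Taylor coefficients of $Q_p$ and $P_{p,N}$ agree, the $L$-function exponents your polynomial step produces for $n\le N$ coincide with the paper's $b_{n,\rho}$, so the two routes yield the same explicit factorization in the end. The paper's approach buys you the singularity data directly without a separate compatibility check across truncation levels.

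One technical point deserves to be made explicit in your write-up: the estimate $Q_p(p^{-s})/P_{p,N}(p^{-s})=1+O(p^{-(N+1)\sigma})$ is only valid for primes $p$ large enough that $P_{p,N}(p^{-s})$ stays bounded away from zero on ${\rm Re}(s)>1/(N+1)$. For the finitely many remaining small primes the truncation introduces artificial zeros of $P_{p,N}(p^{-s})$, which become poles of the individual remainder factors $Q_p/P_{p,N}$; these are cancelled by the matching zeros in the polynomial product, but you should say so, since otherwise the claim that the second product is absolutely convergent on all of ${\rm Re}(s)>1/(N+1)$ is not literally true. Once that cancellation is recorded, your description of the surviving singularities (from $L$-functions and from genuine zeros/singularities of the $Q_p$) is correct.
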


Just like with Dahlquist's results, we remark that it is necessary to allow for singularities of non-integral order in Theorem \ref{thm:frob_dahlquist}. In particular, it is important to avoid using the word ``meromorphic" for this result.

\begin{remark}
    It is reasonable to to expect that, unless $Q_p(z)$ is a unitary polynomial, ${\rm Re}(s) = 0$ is the natural boundary for $\mathcal{Q}(s)$ in Theorem \ref{thm:frob_dahlquist} (up to branch cuts). The main ideas described in Section \ref{sec:natural_boundary} apply equally well to this case, but the details of the argument are outside the scope of this paper.
\end{remark}

\section{Constructing Analytic Continuations}

The analytic continuation to (a branch cut of) the right-half plane ${\rm Re}(s) > 0$ is done up to ${\rm Re}(s) > \epsilon$ for arbitrarily small $\epsilon$. This is so that $\zeta(ns)$ is absolutely convergent on this region for all but finitely many values of $n\in \Z^+$, or in other words only finitely many of these functions can contribute a pole.

\begin{theorem}\label{thm:existence_const_coef}
    Let $Q(z)$ be holomorphic on (a branch cut of) the open unit disk, save for a set of isolated singularities, which is holomorphic at $z=0$ with $Q(0)=1$. Let $r$ be the smallest magnitude of a singularity of $Q(z)$ and let $\mathcal{Q}(s) = \prod Q(p^{-s})$ be the corresponding Euler product.
    \begin{enumerate}[(i)]
        \item There exists a sequence of complex numbers $b_n(Q) = b_n$ such that $\log Q(z) = -\sum b_n\log(1-z^n)$ as formal power series centered at $z=0$, and
        \item For each $\epsilon > 0$,
        \[
            \mathcal{Q}(s) \prod_{p\le r^{-1/\epsilon}}Q(p^{-s})^{-1}\prod_{n\le 1/\epsilon}\zeta(ns)^{-b_n}
        \]
        is an absolutely convergent Dirichlet series (and so holomorphic) on the right-half plane ${\rm Re}(s) > \epsilon$.
    \end{enumerate}
\end{theorem}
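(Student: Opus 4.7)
For part (i), the plan is to determine the sequence $b_n$ by matching coefficients. Since $Q$ is holomorphic at $z=0$ with $Q(0)=1$, $\log Q(z) = \sum_{m \ge 1} c_m z^m$ is a well-defined formal power series. Expanding $-\log(1-z^n) = \sum_{k=1}^\infty z^{nk}/k$ and matching coefficients of $z^m$ yields the relation $m c_m = \sum_{n \mid m} n b_n$, which determines $b_m$ uniquely by M\"obius inversion: $b_m = m^{-1}\sum_{n \mid m} \mu(m/n) n c_n$.

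For part (ii), the strategy is to rewrite the quantity as a single Euler product and invoke Theorem \ref{thm:abs_conv}. On any half-plane ${\rm Re}(s) > \sigma_0$ large enough that all constituent Euler products converge absolutely, $\zeta(ns)^{-b_n}$ factors as $\prod_p (1-p^{-ns})^{b_n}$, and rearranging yields
\[
\mathcal{Q}(s)\prod_{p \le r^{-1/\epsilon}}Q(p^{-s})^{-1}\prod_{n \le 1/\epsilon}\zeta(ns)^{-b_n} = \prod_{p \le r^{-1/\epsilon}}\prod_{n \le 1/\epsilon}(1-p^{-ns})^{b_n} \cdot \prod_{p > r^{-1/\epsilon}}\prod_{n > 1/\epsilon}(1-p^{-ns})^{-b_n},
\]
where in the second product on the right I have used the identity from (i), valid at $z = p^{-s}$ whenever $|p^{-s}| < r$ (which holds for $p > r^{-1/\epsilon}$ and ${\rm Re}(s) > \epsilon$), to cancel the $n \le 1/\epsilon$ terms against those coming from $Q(p^{-s})$. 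The finite product over small primes is an absolutely convergent Dirichlet series on ${\rm Re}(s) > 0$ (each factor is a binomial series in $p^{-s}$), so the crux is the infinite product over large primes.

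To verify absolute convergence of $\prod_{p > r^{-1/\epsilon}}\prod_{n > 1/\epsilon}(1-p^{-ns})^{-b_n}$ on ${\rm Re}(s) > \epsilon$, I would majorize each local factor by the exponential of the absolute value of its logarithm, so that the bound $|\log(1-x)| \le 2|x|$ reduces the question to convergence of the double sum $\sum_{p > r^{-1/\epsilon}}\sum_{n > 1/\epsilon} |b_n| p^{-n{\rm Re}(s)}$. The key input is a growth bound $|b_n| \le C_\delta \cdot n/(r-\delta)^n$ for any $\delta > 0$, which follows from the M\"obius formula in (i) combined with Cauchy's inequality applied to $\log Q$ on $|z| \le r-\delta$.

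The main obstacle is arranging these estimates to hold uniformly in $p$ and $n$. The decisive observation is that the smallest prime $p_0 > r^{-1/\epsilon}$ satisfies $p_0^{-\epsilon} < r$ \emph{strictly}, so one can select $\delta > 0$ with $p_0^{-\epsilon}/(r-\delta) < 1$. For every $p \ge p_0$ and ${\rm Re}(s) \ge \epsilon$ the ratio $p^{-{\rm Re}(s)}/(r-\delta)$ is then bounded by this same quantity. Switching the order of summation and using $\sum_{p > r^{-1/\epsilon}} p^{-n{\rm Re}(s)} \le C_\epsilon \cdot p_0^{-n\epsilon}$ for $n > 1/\epsilon$, the outer sum reduces to a polynomial-times-geometric series in $n$, which converges. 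Excluding the finitely many small primes via $\prod_{p \le r^{-1/\epsilon}}Q(p^{-s})^{-1}$ is precisely what makes this strict inequality available; at the boundary the same argument would require $r - \delta \ge r$, which is impossible.
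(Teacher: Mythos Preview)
Your argument is correct and shares the paper's overall architecture: establish the expansion $\log Q(z)=-\sum b_n\log(1-z^n)$ in part (i), then for part (ii) split the Euler product into small primes (finite, harmless) and large primes $p>r^{-1/\epsilon}$ where $|p^{-s}|<r$ lets you cancel the $n\le 1/\epsilon$ terms against $Q(p^{-s})$.

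The two proofs diverge in how the tail is controlled. The paper never bounds $|b_n|$ at all: it observes that for $|z|<r$ the function $-\sum_{n>1/\epsilon}b_n\log(1-z^n)$ equals $\log Q(z)+\sum_{n\le 1/\epsilon}b_n\log(1-z^n)$, a difference of two functions holomorphic on $|z|<r$ whose Taylor series agree through order $\lfloor 1/\epsilon\rfloor$, and then invokes the Taylor remainder theorem to conclude this difference is $O(z^{1/\epsilon})$ uniformly on compact subsets. Each large-prime factor is therefore $1+O(p^{-s/\epsilon})$, and the product converges because ${\rm Re}(s)/\epsilon>1$. Your route instead extracts the explicit growth estimate $|b_n|\le C_\delta\, n(r-\delta)^{-n}$ from the M\"obius formula and Cauchy's inequality, then sums the resulting double series directly. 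Both work; the paper's argument is shorter and sidesteps any estimate on the $b_n$, while yours has the advantage of making the dependence on $r$ quantitative and reusable (indeed, bounds of exactly your type are what one would want for explicit error terms or for locating singularities more precisely). Your part (i) is likewise more concrete than the paper's: you write down the closed M\"obius-inverted formula $b_m=m^{-1}\sum_{n\mid m}\mu(m/n)\,nc_n$, whereas the paper appeals to an abstract Schauder-basis proposition and only later (in Theorem \ref{thm:explicit_singularities_const_coef}) returns to make things explicit.
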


The existence of an analytic continuation to ${\rm Re}(s)>0$ (up to a branch cut) in Estermann's Theorem \ref{thm:estermann} and Dahlquist's Theorem \ref{thm:dahlquist} follows immediately from this result by taking $\epsilon \to 0$. Moreover, we see that a branch cut is needed if and only if one of the $Q(p^{-s})$ requires a branch cut or one of the $b_n$ is not an integer. While Estermann and Dahlquist did describe the sequence $b_n$ as we do in Theorem \ref{thm:existence_const_coef}(i), this sequence is evident from their proofs.

We are now able to see that the Factorization Method is, secretly, linear algebra. The steps of the method are calculating the first nonzero coefficient in the expansion of $\log Q(z)$ with respect to the basis $\{-\log(1-z^n)\}_{n\ge 1}$. Step (1) and (2) are identifying the smallest basis vector that has a nonzero coefficient, step (3) is factoring the Euler product as
\[
    \mathcal{Q}(s) = \zeta(as)^{b_a} \left(\mathcal{Q}(s)\zeta(as)^{-b_a}\right),
\]
and step (4) is proving a special case of Theorem \ref{thm:existence_const_coef}, that $\mathcal{Q}(s)\zeta(as)^{-b_a}$ converges absolutely on a larger region. Iterating the Factorization Method finds each successive coefficient. By doing the basis decomposition all at once at the start, Theorem \ref{thm:existence_const_coef} gives the analytic continuation that results from applying the Factorization Method ``infinitely many times".

We will discuss some of the intricacies of the infinite dimensional linear algebra we need in Section \ref{sec:inf_lin_alg}, but modulo these details we are ready to prove this result.

\begin{proof}
    $Q(z)$ is equal to a power series in some neighborhood of $z=0$ with constant term $1$, so that we can view $\log Q(z)\in z\C[[z]]$ as a formal power series. Part (i) follows from Proposition \ref{prop:basis_const}, which shows that $\{-\log(1-z^n)\}_{n\ge 1}$ is a Schauder basis of $z\C[[z]]$ (i.e. a basis allowing for infinite linear combinations). This shows the existence of the sequence $b_n(Q)=b_n$.

    Fix $\epsilon > 0$. We then expand
    \begin{align*}
        \mathcal{Q}(s) \prod_{p\le r^{-1/\epsilon}}Q(p^{-s})^{-1}\prod_{n\le 1/\epsilon}\zeta(ns)^{-b_n} &= \prod_{p\le r^{-1/\epsilon}}\left(\prod_{n\le 1/\epsilon}(1-p^{-ns})^{b_n}\right) \prod_{p> r^{-1/\epsilon}}\left(Q(p^{-s})\prod_{n\le 1/\epsilon}(1-p^{-ns})^{b_n}\right).
    \end{align*}
    The product over $p\le r^{-1/\epsilon}$ is finite, and so necessarily absolutely convergent. For ${\rm Re}(s) > \epsilon$ and $p>r^{-1/\epsilon}$, we have $|p^{-s}| < r$. $Q(z)$ is equal to its Taylor series on the ball $|z|<r$ by $Q(z)$ not having any singularities in this region, and thus so is $\log Q(z)$. We can then write
    \begin{align*}
        Q(p^{-s})\prod_{n\le 1/\epsilon}(1-p^{-ns})^{b_n} &= \exp\left(\log Q(p^{-s}) + \sum_{n\le 1/\epsilon} b_n\log(1-(p^{-s})^n)\right)\\
        &=\exp\left(-\sum_{n> 1/\epsilon} b_n\log(1-(p^{-s})^n)\right).
    \end{align*}
    The series inside the exponential converges absolutely for $|p^{-s}| < r$ because it is a sum of two series that converge absolutely on this region. By the remainder theorem from calculus, it follows that
    \[
        -\sum_{n> 1/\epsilon} b_n\log(1-z^n) = O(z^{1/\epsilon})
    \]
    on the region $|z|<r$, which implies
    \begin{align*}
        Q(p^{-s})\prod_{n\le 1/\epsilon}(1-p^{-ns})^{b_n} &= \exp\left(O(p^{-s/\epsilon})\right) = 1 + O(p^{-s/\epsilon})
    \end{align*}
    on the region $|p^{-s}| < r$. As discussed above, this includes the region ${\rm Re}(s) > \epsilon$ for each $p > r^{-1/\epsilon}$. Thus
    \begin{align*}
        \prod_{p>r^{-1/\epsilon}}\left(Q(p^{-s})\prod_{n\le 1/\epsilon}(1-p^{-ns})^{b_n}\right) &= \prod_{p>r^{-1/\epsilon}}\left(1 + O(p^{-s/\epsilon})\right),
    \end{align*}
    which converges absolutely as ${\rm Re}(s) > \epsilon$ implies ${\rm Re}(s)/\epsilon > 1$. This concludes the proof.
\end{proof}

We give an analogous result for Frobenian Euler products.

\begin{theorem}\label{thm:existence_Frob_coef}
    Let $p\mapsto Q_p(z)$ be Frobeian with respect to the finite extension $E/\Q$, where each $Q_p(z)$ is holomorphic on (a branch cut of) the open unit disk, save for a set of isolated singularities, which is holomorphic at $z=0$ with $Q_p(0)=1$. Let $r$ be the smallest magnitude of a singularity of $Q_p(z)$ for any $p$ and let $\mathcal{Q}(s) = \prod Q_p(p^{-s})$ be the corresponding Euler product.
    \begin{enumerate}[(i)]
        \item There exists a sequence of complex numbers $b_{n,\rho}(Q) = b_{n,\rho}$ indexed by positive integers $n$ and irreducible characters $\rho$ of $\Gal(E/\Q)$ such that for all but finitely many primes
        \[
            \log Q_p(z) = -\sum_{n,\rho} b_{n,\rho}\log\det(I-\rho(\Fr_p)z^n)
        \]
        as formal power series centered at $z=0$, and
        \item For each $\epsilon > 0$,
        \[
            \mathcal{Q}(s) \prod_{p\le r^{-1/\epsilon}}Q_p(p^{-s})^{-1}\prod_{n\le 1/\epsilon}\prod_\rho L(ns,\rho)^{-b_{n,\rho}}
        \]
        is an absolutely convergent Dirichlet series (and so holomorphic) on the right-half plane ${\rm Re}(s) > \epsilon$.
    \end{enumerate}
\end{theorem}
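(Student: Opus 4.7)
The strategy parallels the proof of Theorem \ref{thm:existence_const_coef}, with $\{-\log(1-z^n)\}$ replaced by $\{-\log\det(I-\rho(g)z^n)\}_{n,\rho}$ and $\zeta(ns)$ replaced by the Artin $L$-functions $L(ns,\rho)$. The heart of the argument is a Schauder-basis-type statement for formal power series whose coefficients are class functions on $\Gal(E/\Q)$, playing the role that Proposition \ref{prop:basis_const} played in the constant-coefficient case; that statement gives part (i) and then feeds into the same remainder-estimate structure used for part (ii).

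For part (i), I would let $S$ be the finite exceptional set outside of which $p\mapsto Q_p$ factors through the conjugacy class of $\Fr_p$. For $p\notin S$, writing $Q_p(z) = Q_{\Fr_p}(z)$ and expanding $\log Q_g(z) = \sum_{m\ge 1} c_m(g)z^m$ produces class functions $c_m$ on $\Gal(E/\Q)$. Using
\[
-\log\det(I-\rho(g)z^n) = \sum_{k\ge 1}\frac{{\rm tr}\,\rho(g^k)}{k}z^{nk},
\]
the target identity reduces to solving
\[
c_m(g) = \sum_{n\mid m}\frac{n}{m}\sum_\rho b_{n,\rho}\,{\rm tr}\,\rho(g^{m/n})
\]
for each $m\ge 1$. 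Isolating the $n=m$ term yields $\sum_\rho b_{m,\rho}\,{\rm tr}\,\rho(g)$ on one side and a known class function (depending on $c_m$ and on $b_{n,\rho}$ for $n<m$) on the other. Because $\{{\rm tr}\,\rho\}_\rho$ is a basis for class functions on $\Gal(E/\Q)$, the $b_{m,\rho}$ are uniquely determined, and induction on $m$ completes part (i).

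For part (ii), I would let $R$ denote the finite set of primes ramified in $E/\Q$ and split the product over primes into a finite piece over $p \in S\cup R\cup \{p\le r^{-1/\epsilon}\}$ and an infinite piece over the complementary set $P$ of large unramified primes outside $S$. On ${\rm Re}(s)>\epsilon$ the finite piece is a product of holomorphic functions: each $Q_p(p^{-s})$ is holomorphic since $|p^{-s}|<1$, and the ramified Euler factors $\det(I-\rho(\Fr_p|V^{I_p})p^{-ns})^{b_{n,\rho}}$ are nonzero there since the zeros of $\det(I-\rho(\Fr_p|V^{I_p})p^{-ns})$ lie on ${\rm Re}(s)=0$. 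On the infinite piece, for each $p\in P$ we have $|p^{-s}|<r$, so the local identity from part (i) applies with $z=p^{-s}$:
\[
\log Q_p(p^{-s}) + \sum_{n\le 1/\epsilon}\sum_\rho b_{n,\rho}\log\det(I-\rho(\Fr_p)p^{-ns}) = -\sum_{n>1/\epsilon}\sum_\rho b_{n,\rho}\log\det(I-\rho(\Fr_p)p^{-ns}).
\]
Using that $\log Q_g(z)$ is analytic on $|z|<r$ uniformly across the finitely many conjugacy classes, the right-hand side is $O(p^{-s/\epsilon})$ uniformly in $p\in P$. Thus the infinite piece equals $\prod_{p\in P}(1+O(p^{-s/\epsilon}))$, which is absolutely convergent on ${\rm Re}(s)>\epsilon$ exactly as in the constant-coefficient case.

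The main obstacle will be the basis step in part (i): passing from formal solvability to an analytic identity with remainder bounds uniform in $g$. The algebraic uniqueness of the $b_{n,\rho}$ is immediate from linear independence of $\{{\rm tr}\,\rho\}$, but the uniform analytic remainder estimate is where finiteness of $\Gal(E/\Q)$ (hence finiteness of the set of conjugacy classes, and positivity of $r$) is essential, letting us upgrade pointwise Taylor estimates on $|z|<r$ to estimates uniform over $g$. The bookkeeping around ramified primes and the exceptional set $S$ is administratively awkward but causes no genuine trouble, since both are finite and contribute only a finite product of holomorphic functions on ${\rm Re}(s)>\epsilon$.
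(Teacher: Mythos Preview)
Your proposal is correct and follows essentially the same route as the paper: part (i) is the Schauder-basis/lower-triangular statement of Proposition \ref{prop:basis_Frob} (your inductive solve for the $b_{m,\rho}$ is exactly the content of Lemma \ref{lem:lowertri} specialized to this setting), and part (ii) is the same tail-remainder estimate obtained by subtracting the truncated basis expansion from $\log Q_p$. If anything, you are more careful than the paper about separating out the ramified primes and the exceptional set $S$ before invoking the identity from part (i); the paper absorbs these into the finite product over $p\le r^{-1/\epsilon}$ somewhat implicitly.
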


Theorem \ref{thm:existence_Frob_coef} implies the existence of an analytic continuation to (a branch cut of) ${\rm Re}(s) > 0$ in the statements of Kurokawa's and Moroz's Theorem \ref{thm:km} as well as in the original Theorem \ref{thm:frob_dahlquist} generalizing Dahlquist's work. Just like in the constant coefficients case, we see that a branch cut is needed if and only if one of the $Q_p(p^{-s})$ requires a branch cut or one of the $b_{n,\rho}$ is not an integer.

The proof of Theorem \ref{thm:existence_Frob_coef} is completely analogous to that of Theorem \ref{thm:existence_const_coef}. We include it for the sake of completeness.

\begin{proof}
    For each prime $Q_p(z)$ is equal to a power series in some neighborhood of $z=0$ with constant term $1$, so that for all but finitely many primes we can view $\log Q_p(z)\in zR(\Gal(E/\Q))[[z]]$ as a formal power series, where $R(\Gal(E/\Q))$ is the ring of virtual characters $\Gal(E/\Q)\to \C$. Part (i) follows from Proposition \ref{prop:basis_Frob}, which shows that $\{-\log\det(I-\rho(\Fr_p)z^n)\}_{n\ge 1}$ is a Schauder basis of $zR(\Gal(E/\Q))[[z]]$ (i.e. a basis allowing for infinite linear combinations). This shows the existence of the sequence $b_{n,\rho}(Q)=b_{n,\rho}$.

    Fix $\epsilon > 0$. We then expand
    \begin{align*}
        &\mathcal{Q}(s) \prod_{p\le r^{-1/\epsilon}}Q_p(p^{-s})^{-1}\prod_{n\le 1/\epsilon}\prod_\rho L(ns,\rho)^{-b_n}\\
        &=\prod_{p\le r^{-1/\epsilon}}\left(\prod_{n\le 1/\epsilon}\prod_\rho\det(I-\rho(\Fr_p|V^{I_p})p^{-ns})^{b_{n,\rho}}\right) \prod_{p> r^{-1/\epsilon}}\left(Q(p^{-s})\prod_{n\le 1/\epsilon}\prod_\rho\det(I-\rho(\Fr_p|V^{I_p})p^{-ns})^{b_n}\right).
    \end{align*}
    The product over $p\le r^{-1/\epsilon}$ is finite, and so necessarily absolutely convergent. For ${\rm Re}(s) > \epsilon$ and $p>r^{-1/\epsilon}$, we have $|p^{-s}| < r$. $Q_p(z)$ is equal to its Taylor series on the ball $|z|<r$ by $Q_p(z)$ not having any singularities in this region, and thus so is $\log Q_p(z)$. We can then write
    \begin{align*}
        Q_p(p^{-s})\prod_{n\le 1/\epsilon}\prod_\rho\det(I-\rho(\Fr_p|V^{I_p})p^{-ns})^{b_n} &= \exp\left(\log Q_p(p^{-s}) + \sum_{n\le 1/\epsilon}\sum_{\rho} b_{n,\rho}\log\det(I-\rho(\Fr_p|V^{I_p})(p^{-s})^n)\right)\\
        &=\exp\left(-\sum_{n> 1/\epsilon}\sum_{\rho} b_{n,\rho}\log\det(I-\rho(\Fr_p|V^{I_p})(p^{-s})^n\right).
    \end{align*}
    The series inside the exponential converges absolutely for $|p^{-s}| < r$ because it is a sum of two series that converge absolutely on this region. By the remainder theorem from calculus, it follows that
    \[
        -\sum_{n> 1/\epsilon} \sum_{\rho} b_{n,\rho}\log\det(I-\rho(\Fr_p|V^{I_p})z^n= O(z^{1/\epsilon})
    \]
    on the region $|z|<r$, which implies
    \begin{align*}
        Q_p(p^{-s})\prod_{n\le 1/\epsilon}\prod_\rho\det(I-\rho(\Fr_p|V^{I_p})p^{-ns})^{b_n} &= \exp\left(O(p^{-s/\epsilon})\right) = 1 + O(p^{-s/\epsilon})
    \end{align*}
    on the region $|p^{-s}| < r$. As discussed above, this includes the region ${\rm Re}(s) > \epsilon$ for each $p > r^{-1/\epsilon}$. Thus
    \begin{align*}
        \prod_{p> r^{-1/\epsilon}}\left(Q(p^{-s})\prod_{n\le 1/\epsilon}\prod_\rho\det(I-\rho(\Fr_p|V^{I_p})p^{-ns})^{b_n}\right) &= \prod_{p>r^{-1/\epsilon}}\left(1 + O(p^{-s/\epsilon})\right),
    \end{align*}
    which converges absolutely as ${\rm Re}(s) > \epsilon$ implies ${\rm Re}(s)/\epsilon > 1$. This concludes the proof.
\end{proof}

\section{Some Infinite Dimensional Linear Algebra}\label{sec:inf_lin_alg}

The Factorization Method is, secretly, a basis decomposition. In the constant coefficient case, the expression $\log Q(z) = -\sum b_n \log(1-z^n)$ is an expression of $\log Q(z)$ as an element of the span of $\{-\log(1-z^n)\}$. However, it is important to recognize that this is the span when allowing convergent infinite linear combinations.

Let $V$ be a normed vector space. A \textbf{Schauder basis} for $V$ is a set of vectors $\{v_n\}$ such that every vector in $V$ can be written as a convergent linear combination of the $\{v_n\}$, and the only linear combination of $\{v_n\}$ that converges to $0$ is the trivial one.

Given a power series $f(z)$, define ${\rm ord}(f)$ to be the smallest degree term with a nonzero coefficient. This makes $z\C[[z]]$ into a normed vector space with norm given by
\[
    ||f(z)|| = \frac{1}{{\rm ord}(f)}.
\]
Consider the sequence of monomials $\{z^n\}_{n\ge 1}$ in $z\C[[z]]$. By definition of the ring of power series, these vectors form a Schauder basis for $z\C[[z]]$.

\begin{proposition}\label{prop:basis_const}
    The sequence $\{-\log(1-z^n)\}_{n\ge 1}$ is a Schauder basis for $z\C[[z]]$.
\end{proposition}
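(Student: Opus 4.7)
The plan is to exploit the fact that $-\log(1-z^n) = z^n + \tfrac{1}{2}z^{2n} + \tfrac{1}{3}z^{3n} + \cdots$ has order exactly $n$ with leading coefficient $1$, which makes the family $\{-\log(1-z^n)\}_{n \ge 1}$ "lower triangular" with respect to the monomial basis $\{z^n\}_{n\ge 1}$. This triangularity lets one construct the coefficients $b_n$ recursively and control convergence in the order norm.

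For existence, I would set $f_0 = f \in z\mathbb{C}[[z]]$ and recursively define $b_{n+1}$ and $f_{n+1}$ as follows. Suppose $f_n$ has been constructed with $\operatorname{ord}(f_n) \ge n+1$. Let $c_{n+1}$ be the coefficient of $z^{n+1}$ in $f_n$, set $b_{n+1} = c_{n+1}$, and put
\[
f_{n+1} = f_n - b_{n+1}\bigl(-\log(1-z^{n+1})\bigr).
\]
Since $-\log(1-z^{n+1})$ has leading term $z^{n+1}$ with coefficient $1$, the coefficient of $z^{n+1}$ in $f_{n+1}$ is zero, so $\operatorname{ord}(f_{n+1}) \ge n+2$. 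Writing $S_N = \sum_{n=1}^N b_n(-\log(1-z^n))$, we get $f - S_N = f_N$ with $\operatorname{ord}(f - S_N) \ge N+1$, i.e.\ $\|f - S_N\| \le 1/(N+1) \to 0$. Thus $f = \sum_{n\ge 1} b_n(-\log(1-z^n))$ converges in the order norm.

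For linear independence of infinite combinations (equivalently, uniqueness of the expansion), suppose $\sum_{n\ge 1} b_n(-\log(1-z^n)) = 0$ in $z\mathbb{C}[[z]]$. The key observation is that the coefficient of $z^k$ in this sum is a \emph{finite} sum, namely $\sum_{n \mid k} b_n / (k/n)$, because $-\log(1-z^n)$ contributes to degree $k$ only when $n \mid k$. Extracting the coefficient of $z^1$ gives $b_1 = 0$. Assuming inductively that $b_1 = \cdots = b_{k-1} = 0$, extracting the coefficient of $z^k$ gives $b_k = 0$. Hence only the trivial combination vanishes.

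The main (minor) obstacle is bookkeeping around the convergence notion: one must verify that (i) the definition of convergence given by the order norm is the right one to match the formal power series topology, and (ii) the coefficient-extraction step in the uniqueness argument is legitimate because each degree receives contributions from only finitely many basis elements. Both are immediate from the structure of $z\mathbb{C}[[z]]$, so beyond checking these the argument is purely triangular substitution. This same template will then transfer to Proposition on the Frobenian side, with $-\log(1-z^n)$ replaced by $-\log\det(I - \rho(g)z^n)$ and the role of the monomial basis replaced by pairs $(n,\rho)$ with $\rho$ an irreducible character.
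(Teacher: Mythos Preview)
Your proof is correct and follows essentially the same approach as the paper: both exploit the lower-triangular structure of $\{-\log(1-z^n)\}$ with respect to the monomial basis (since $-\log(1-z^n) = z^n + \tfrac{1}{2}z^{2n} + \cdots$ has order exactly $n$ with leading coefficient $1$). The paper packages the triangular elimination into a general lemma (Lemma~\ref{lem:lowertri}) about lower-triangular matrices with nonzero diagonal over well-ordered index sets and then applies it, whereas you carry out the same recursion and uniqueness induction directly; the mathematical content is identical.
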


Thus, any power series $\log Q(z) \in z\C[[z]]$ can be written as a convergent linear combination of $-\log(1-z^n)$. The coefficients of this linear combination are precisely the sequence $b_n$. We can prove something similar for Frobenian power series.

\begin{proposition}\label{prop:basis_Frob}
    Let $E/\Q$ be a finite extension and $R(\Gal(E/\Q))$ the ring of complex valued virtual characters. The sequence $\{-\log\det(I - \rho(\Fr_p|V^{I_p})z^n)\}_{n\ge 1,\rho}$ is a Schauder basis for $zR(\Gal(E/\Q))[[z]]$, where $\rho:\Gal(E/\Q)\to {\rm GL}(V)$ varies over the irreducible representations of $\Gal(E/\Q)$.
\end{proposition}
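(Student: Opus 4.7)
The plan is to mirror the proof of Proposition~\ref{prop:basis_const} via a triangular change-of-basis argument, exploiting the fact that the irreducible characters $\chi_\rho$ form a $\C$-basis of the finite-dimensional space $R(\Gal(E/\Q))$ (by orthogonality of characters). Since only the abstract representation theory of $\Gal(E/\Q)$ plays a role in this Schauder-basis statement, I will treat $\rho(\Fr_p|V^{I_p})$ as $\rho(g)$ with $g\in\Gal(E/\Q)$ an indeterminate, so that each proposed basis element $-\log\det(I-\rho z^n)$ is an element of $zR(\Gal(E/\Q))[[z]]$ in the natural way.

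The first step is the leading-term computation. Using the identity $-\log\det(I-Az) = \sum_{k\ge 1}\frac{{\rm tr}(A^k)}{k}z^k$, I obtain
\[
-\log\det(I-\rho z^n) = \chi_\rho\, z^n + \sum_{k\ge 2}\frac{\chi_\rho^{(k)}}{k}\, z^{nk},
\]
where $\chi_\rho^{(k)}(g) := \chi_\rho(g^k)$ is a class function lying in $R(\Gal(E/\Q))$. Thus the lowest-degree term is $\chi_\rho z^n$ and the remaining terms have $z$-degree at least $2n$. This is the essential ``upper triangular'' structure that drives the whole argument: when pairs $(n,\rho)$ are ordered by $n$, each basis element contributes an independent direction $\chi_\rho z^n$ plus corrections of strictly larger $z$-degree.

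To prove spanning, given $f=\sum_{m\ge 1}c_m z^m \in zR(\Gal(E/\Q))[[z]]$, I will construct $b_{n,\rho}$ recursively in $m$. Having fixed $b_{n,\rho}$ for all $n<m$, the coefficient of $z^m$ in the formal sum $\sum_{n,\rho}b_{n,\rho}(-\log\det(I-\rho z^n))$ equals $\sum_\rho b_{m,\rho}\chi_\rho$ plus an explicit expression in the already-chosen coefficients and in the class functions $\chi_\rho^{(k)}$ for $k\ge 2$. Setting this equal to $c_m$ and inverting using that $\{\chi_\rho\}$ is a $\C$-basis of $R(\Gal(E/\Q))$ determines $b_{m,\rho}$ uniquely. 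By construction the partial sum over $n\le N$ agrees with $f$ through degree $N$, so the remainder has ${\rm ord}>N$; this shows simultaneously that the series converges in the ${\rm ord}$-adic topology and that the proposed family spans topologically.

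For the linear independence half of the Schauder condition, a convergent relation $\sum b_{n,\rho}(-\log\det(I-\rho z^n))=0$ forces the $z^1$-coefficient $\sum_\rho b_{1,\rho}\chi_\rho$ to vanish, hence $b_{1,\rho}=0$ for every $\rho$; the $z^2$-coefficient then reduces to $\sum_\rho b_{2,\rho}\chi_\rho$ and forces $b_{2,\rho}=0$, and so on by induction on $m$. The hard part will be bookkeeping the auxiliary class functions $\chi_\rho^{(k)}$ when they are expanded back in the irreducible basis during the recursion; however, since these contribute only to coefficients of strictly larger degree than the leading one, they do not disrupt the triangular structure, and the recursion can always be solved. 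I therefore do not anticipate any serious obstruction beyond careful indexing, with the one-dimensional space $\C$ from the constant-coefficient case replaced uniformly by the finite-dimensional space $R(\Gal(E/\Q))$.
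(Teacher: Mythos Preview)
Your proposal is correct and follows essentially the same approach as the paper: both exploit the leading-term identity $-\log\det(I-\rho z^n)=\chi_\rho z^n+(\text{terms of degree }\ge 2n)$ together with the fact that $\{\chi_\rho\}$ is a $\C$-basis of $R(\Gal(E/\Q))$ to obtain a triangular change of basis. The only cosmetic difference is that the paper packages the recursive solve into the abstract Lemma~\ref{lem:lowertri} (applied to the partial order $(i,\gamma)<(j,\psi)\iff i<j$), whereas you carry out the induction on $z$-degree directly.
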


These propositions are proven by showing that the linear transformation from the standard basis to these vectors is lower triangular in a certain infinite dimensional sense with nonzero diagonal. Just like their finite dimensional counterparts, these infinite dimensional lower triangular matrices are invertible.

\begin{lemma}\label{lem:lowertri}
Let $W$ be a vector space with a Schauder basis $\{w_i\}_{i\in I}$ indexed by a partially ordered set $I$. Suppose further that $I$ satisfies a Northcott property, namely that for each $i\in I$ the set $\{j\in I : j < i\}$ is finite. We call a matrix $T = [t_{i,j}]$ \textbf{lower triangular} if $t_{i,j}\ne 0$ implies $i \ge j$.

Every lower triangular matrix whose diagonal entries are nonzero is invertible.
\end{lemma}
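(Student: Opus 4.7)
The plan is to construct the two-sided inverse $S = [s_{i,j}]$ entrywise by a recursion that exploits the Northcott property to keep every sum finite. First I would declare $S$ lower triangular as well, setting $s_{i,j} = 0$ whenever $i \not\geq j$; this is the correct ansatz because the formal equation $ST = I$ turns out to be solvable in lower triangular form.

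On the diagonal, the equation $(ST)_{i,i} = 1$ collapses, by lower triangularity of both $S$ and $T$, to $s_{i,i}\,t_{i,i} = 1$, which forces $s_{i,i} = t_{i,i}^{-1}$ and is well-defined by hypothesis. For $i > j$ in $I$, the equation $(ST)_{i,j} = 0$ becomes
\[
\sum_{j \leq k \leq i} s_{i,k}\,t_{k,j} = 0,
\]
and the Northcott property makes the index set $\{k \in I : j \leq k \leq i\}$ finite. Solving for $s_{i,j}$ yields the recursion
\[
s_{i,j} = -t_{j,j}^{-1} \sum_{j < k \leq i} s_{i,k}\,t_{k,j}.
\]
I would then induct on the (finite) cardinality of the interval $[j, i]$ to check that $s_{i,j}$ is well-defined: each $s_{i,k}$ appearing on the right lives in a strictly smaller interval $[k, i]$, so the recursion is well-founded.

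This construction gives $ST = I$ by design. To obtain $TS = I$ as well, I would run the same recursion with $S$ in place of $T$: the matrix $S$ is itself lower triangular with nonzero diagonal entries $t_{i,i}^{-1}$, so it admits a left inverse $U$ built by the analogous recursion. Every entry of every product appearing involves only a finite sum (again by Northcott), so matrix multiplication is associative, and $U = U(ST) = (US)T = T$ yields $TS = US = I$. The main subtle point will be that the order on $I$ is only partial, so two indices greater than $j$ may be incomparable; the induction therefore has to run on the size of the down-set $\{k : k \leq i\}$ rather than on $i$ itself, and the Northcott hypothesis is precisely what makes this bookkeeping go through at every step.
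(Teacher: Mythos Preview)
Your proposal is correct and rests on the same core idea as the paper: the Northcott property makes the relevant recursion well-founded because every sum that arises is finite. The paper organizes the argument slightly differently. Instead of building the inverse matrix $S$ entry-by-entry from $ST=I$, it shows directly that the linear system $T\vec{v}=\vec{c}$ has a unique solution for every $\vec{c}$, by strong induction on $i\in I$ using the forward-substitution formula $v_i = t_{i,i}^{-1}\bigl(c_i - \sum_{j<i} t_{i,j} v_j\bigr)$. This gives bijectivity of $T$ in one stroke, so no separate associativity argument for two-sidedness is needed. Your route is equally valid and has the minor advantage of yielding an explicit entrywise description of $T^{-1}$; the paper's forward-substitution viewpoint is the one that actually gets reused downstream (in Theorems \ref{thm:explicit_singularities_const_coef}(i) and \ref{thm:explicit_singularities_Frob_coef}(i)) to produce the recursive formulas for the coefficients $b_n$.
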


\begin{proof}
Consider the equation $T\vec{v} = \vec{c}$. We prove that this equation has a unique solution by strong induction on the index set $I$. We note that the Northcott property implies a version of being well-ordered: If $S\subset I$ is nonempty, then there exists some $i\in S$. The set
\[
S\cap \{j \le i : j\in I\}
\]
is finite and is closed under $<$ as a subset of $S$, and therefore must contain a minimal element of $S$.

We construct the coordinates of the vector $\vec{v}$ by induction. For the base case, let $i_0\in I$ be any minimal element (note that this may not be unique). The $i_0$th row of $T$ satisfies
\[
t_{i_0,j} = \begin{cases}
\text{nonzero} & j=i_0\\
0 & j\ne i_0,
\end{cases}
\]
because $i_0$ is the only index $\le i_0$. Therefore, the product along the $i_0$th row is given by
\[
c_{i_0} = \sum_{j} t_{i_0,j}v_{j} = t_{i_0,i_0} v_{i_0}.
\]
Thus $v_{i_0} = t_{i_0,i_0}^{-1} c_{i_0}$ exists and is unique. This is true regardless of which minimal element we choose, so the base case may be considered to be ``all minimal elements of $I$".

We proceed via strong induction. Choose some $i\in I$. Suppose that for each $j < i$, there is a unique solution for $v_j$. Then multiplying through the $i$th row gives
\[
c_i = \sum_j t_{i,j}v_j = \sum_{j\le i}t_{i,j} v_j
\]
by $T$ lower triangular. We note that, by the definition of the ordering on $I$, this sum is finite so we do not need to worry about convergence issues. Solving for $v_i$ we find that
\[
v_i = t_{i,i}^{-1}\left(c_i - \sum_{j<i}t_{i,j} v_j\right),
\]
which exits and is unique by the inductive hypothesis.

The result then follows from strong induction on a well-ordered set and gives an iterative formula for each $v_i$ value.
\end{proof}

We can now prove the propositions.

\begin{proof}[Proof of Proposition \ref{prop:basis_const}]
    Let $T:z\C[[z]]\to z\C[[z]]$ be the linear transformation sending $Tz^n = -\log(1-z^n)$. If we write $T=[t_{i,j}]$ as a matrix with respect to the standard basis $\{z^n\}_{n\ge 1}$, then
    \begin{align*}
        \sum_{j=1}^{\infty} t_{j,n}z^n = Tz^n = -\log(1-z^n) = \sum_{k=1}^{\infty} \frac{1}{k}z^{kn}.
    \end{align*}
    Thus, $t_{j,n} = 0$ if $j\ne kn$ for some integer $k\ge 1$, so in particular if $j<n$. This implies $T$ is lower triangular. Moreover, $t_{n,n} = 1$ is nonzero for each $n\ge 1$, which by Lemma \ref{lem:lowertri} implies that $T$ is invertible. Thus, $\{-\log(1-z^n)\}_{n\ge 1}$ is the image of a basis under an invertible linear transformation, and so too must be a basis.
\end{proof}

\begin{proof}[Proof of Proposition \ref{prop:basis_Frob}]
    The vector space $zR(\Gal(E/\Q))[[z]]$ has standard basis given by $\{{\rm tr}\rho(\Fr_p|V^{I_p}) z^n\}_{n\ge 1,\rho}$, where $\rho:\Gal(E/\Q)\to V$ varies over the irreducible representations of $\Gal(E/\Q)$. This is a basis because the irreducible characters form a basis for $R(\Gal(E/\Q))$.
    
    Let $T:zR(\Gal(E/\Q))[[z]]\to zR(\Gal(E/\Q))[[z]]$ be the linear transformation sending $T{\rm tr}\rho(\Fr_p|V^{I_p})z^n = -\log\det(I-\rho(\Fr_p|V^{I_p})z^n)$. If we write $T=[t_{(i,\gamma),(j,\psi)}]$ as a matrix with respect to the standard basis $\{{\rm tr}\rho(\Fr_p|V^{I_p})z^n\}_{n\ge 1,\rho}$, then
    \begin{align*}
        \sum_{j=1}^{\infty} t_{(i,\gamma),(n,\rho)}{\rm tr}\rho(\Fr_p|V^{I_p})z^n = Tz^n = -\log\det(I-\rho(\Fr_p|V^{I_p})z^n) = 1 + {\rm tr}\rho(\Fr_p|V^{I_p})z^n + O(z^{2n}).
    \end{align*}
    Put a partial ordering on the set
    \[
        \{(i,\gamma):i\in \Z^+,\gamma\text{ irreducible representation of }\Gal(E/\Q)\}
    \]
    given by $(i,\gamma) < (j,\psi)$ if $i < j$. This set is well-ordered by $E/\Q$ finite, and so admitting at most finitely many irreducible representations. Thus, $t_{(i,\gamma),(n,\rho)} = 0$ if $i\ne kn$ for some integer $k\ge 1$, so in particular if $i<n$. By definition, this means that $t_{(i,\gamma),(n,\rho)} = 0$ if $(i,\gamma) < (n,\rho)$ which implies $T$ is lower triangular. Moreover, $t_{(n,\rho),(n,\rho)} = 1$ is nonzero for each $n\ge 1$, which by Lemma \ref{lem:lowertri} implies that $T$ is invertible. Thus, $\{-\log\det(I-\rho(\Fr_p|V^{I_p})z^n)\}_{n\ge 1,\rho}$ is the image of a basis under an invertible linear transformation, and so too must be a basis.
\end{proof}

\section{Determining the Natural Boundary}\label{sec:natural_boundary}

The primary goal of Estermann's work, and the work that followed, was the determination of the natural boundary. This is done by proving that every point on the line ${\rm Re}(s)=0$ is a limit of zeros or singularities, and therefore are essential singularities.

Study of the natural boundary of Dirichlet series has continued far beyond the cases discussed in this paper. One case of particular interest is that of Euler products of the form
\[
    \mathcal{W}(s) := \prod_p W(p,p^{-s}),
\]
where $W(X,Y)\in \C(X,Y)$ is a rational function. Perhaps surprisingly, the natural boundary of $\mathcal{W}(s)$ is still not known for every such rational function $W(X,Y)$. Similar to Example \ref{ex:growing_coef}, $\mathcal{W}(s)$ can be factored as an infinite product of functions of the form $\zeta(ns+m)$ for integers $n,m\in \Z$. The horizontal shift makes it more difficult to exclude the possibility that the zeros of each factor are cancelled out by zeros of other factors. The zeta functions of certain nilpotent groups are naturally given by this type of Euler product, and for this reason the analytic theory of the natural boundary for this type of Euler product is summarized in the book ``Zeta Functions of Groups and Rings'' by du Sautoy and Woodward \cite[Section 5]{duSautoy-Woodward2008}.

The conjectural location of the natural boundary for $\mathcal{W}(s)$ when $W(X,Y)$ is a polynomial can be found in \cite[Conjecture 1.4]{bhowmik2010}. Some references concerning the natural boundaries for these and other Dirichlet series include \cite{bhowmik-schlage-puchta2007,essouabri_velásquez_castañón_2020,han_ki_park_2021}. Of particular interest is \cite{bhowmik-schlage-puchta2016}, where it is shown that the natural boundary need not be of the form ${\rm Re}(s) = \beta$ for a real number $\beta$, and \cite{bhowmik-schlage-puchta2010}, where it is shown that there must exist Euler products $\mathcal{W}(s)$ of this form for which Estermann's argument cannot work to produce the natural boundary.

The analog of the natural boundary for Dirichlet series of multiple variables has also been considered, in particular for those arising as the height zeta functions of varieties. For some examples, see \cite{delabarre_2013,delabarre2014,essouabri2012height,bhowmik_essouabri_lichtin_2007}.

The full details of arguments proving the existence of natural boundaries are outside the scope of this paper, as we are primarily concerned with the explicit construction of a continuation. However, we can briefly summarize the main idea in the language of Theorem \ref{thm:existence_const_coef} and Theorem \ref{thm:existence_Frob_coef}. We refer the reader to the corresponding papers \cite{estermann_1928,dahlquist_1952,kurokawa_1978,kurokawa_1986I,kurokawa_1986II,moroz_1988} for the details.

\begin{proof}[Sketch of the Argument Determining the Natural Boundary]
    Let $Q(z)$ be as in Theorem \ref{thm:existence_const_coef} (respectively Theorem \ref{thm:existence_Frob_coef}), with corresponding sequence $b_n$ (resp. $b_{n,\rho}$). Assume further that $Q(z)$ is holomorphic on a branch cut of the entire complex plane, rather than just the open unit disk.

    If the sequence $b_n$ (resp. $b_{n,\rho}$) has at most finitely many nonzero terms, then it is clear from the proofs of Theorem \ref{thm:existence_const_coef} (resp. Theorem \ref{thm:existence_Frob_coef}) that $\mathcal{Q}(s)$ factors as a finite product of complex powers of Artin $L$-functions times a function which is holomorphic on a branch cut of $\C$. Thus, $\mathcal{Q}(s)$ itself is holomorphic on a branch cut of the the entire complex plane.

    Otherwise, for each $n$ such that $b_n$ (resp. $b_{n,\rho}$) is nonzero the factor $\zeta(ns)^{b_n}$ (resp. $L(ns,\rho)^{b_n}$) contributes a zero or an isolated singularity at each root of $\zeta(ns)$ (resp. $L(ns,\rho)$). For each $n$, consider the set
    \[
        D_n(\delta,t_0) = \left\{s=\sigma+it : \frac{1}{2n+1}\le \sigma \le \frac{1}{2n-1},\ t_0\le t\le t_0+\delta\right\}
    \]
    for each $t_0$ and $\delta$. It suffices to show that for all $\delta,t_0$, there exist infinitely many $n$ for which $D_n(\delta,t_0)$ contains a zero or singularity of $\mathcal{Q}(s)$. This would imply that $s = 0 + it_0$ is a limit point of zeros or singularities of $\mathcal{Q}(s)$, and therefore must be an essential singularity of $\mathcal{Q}(s)$. Doing this for each $t_0$ implies that ${\rm Re}(s) = 0$ is a wall of essentialy singularities, past which we cannot continue $\mathcal{Q}(s)$.

    It is known that 100\% of the zeros of $\zeta(s)$ (resp. $L(s,\rho)$) in the critical strip have real part in $[1/2-\epsilon,1/2+\epsilon]$. The set $D_n(\delta,t_0)$ intersects the critical line of $\zeta(ms)$ (resp. $L(ms,\rho)$) for at most one value of $m$, so that $D_n(\delta,t_0)$ is expected to contain asymptotically more zeros of $\zeta(ns)$ (resp. $L(ns,\rho)$) than for $\zeta(ms)$ (resp. $L(ms,\rho)$) with $m\ne n$. Thus, no matter what the sequence $\{b_n\}$ is, as long as $b_n\ne 0$ we expect at least one root of $\zeta(ns)$ (resp. $L(ns,\rho)$) to not be cancelled out in the product of $L$-functions giving $\mathcal{Q}(s)$. This corresponds to a zero or singularity of $\mathcal{Q}(s)$ in $D_n(\delta,t_0)$.

    There is more work needed to make this rigorous, in particular checking that the zeros of $L(s,\rho)$ and $L(s,\psi)$ do not have significant overlap for different representations $\rho$ and $\psi$. The bunching up of zeros near the critical strip of $\zeta(ns)$ as $n\to \infty$ is the main idea behind these proofs, but the actual proofs involve deep results about the locations of zeros of Artin $L$-functions.
\end{proof}

\part{Explicit Descriptions of Singularities}\label{part:explicit}

\section{Statement of Explicit Results}

The singularities in Theorem \ref{thm:existence_const_coef} are determined by the singularities of $Q(p^{-s})$ or of $\zeta(ns)^{b_n(Q)}$. The singularities of $Q(p^{-s})$ (if there are any) are determined by the choice of function $Q(z)$. Meanwhile, the singularities of $\zeta(ns)^{b_n(Q)}$ are determined by the pole or zeros of $\zeta(s)$ together with the value of $b_n(Q)$.

Thus, to understand the singularities of $\mathcal{Q}(s)$, it suffices to understand
\begin{itemize}
    \item The zeros and singularities of $Q(z)$,
    \item The zeros and singularities of $\zeta(s)$, and
    \item the values of the sequence $b_n(Q)$.
\end{itemize}
In the cases of interest, the zeros and singularities of $Q(z)$ are often evident from the function being considered. While we do not completely understand the zeros of $\zeta(s)$, in many applications it is sufficient to know that the nontrivial zeros all lie in the critical strip. Problems that require more information about the zeros of $\zeta(s)$ may use the zero-free region or prove results conditional on the Riemann Hypothesis.

That leaves the sequence $b_n(Q)$. Theorem \ref{thm:existence_const_coef}(i) gave this sequence as the coefficients of a basis decomposition. By taking more care to be explicit with the linear algebra, we can prove some equivalent descriptions for this sequence.

\begin{theorem}\label{thm:explicit_singularities_const_coef}
    Let $Q(z)\in 1+z\C[[z]]$ be a power series. Let $b_n = b_n(Q)$ be the unique sequence of complex numbers such that
    \[
        \log Q(z) = -\sum_{n=1}^{\infty} b_n \log(1-z^n).
    \]
    Then the following hold:
    \begin{enumerate}[(i)]
        \item Write $\log Q(z)=\sum q_n z^n$. Then $b_1=q_1$, and for $n>1$ the sequence is given by the recursive formula
        \[
            b_n = q_n - \sum_{\substack{d\mid n\\d>1}}\frac{1}{d}b_{n/d}.
        \]
        \item If $Q(z) \in 1 + z\Z[[z]]$, then $b_n\in \Z$.
    \end{enumerate}
\end{theorem}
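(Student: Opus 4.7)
The plan is to prove (i) by direct coefficient extraction from the defining identity, and (ii) by strong induction through a sequence of partial products that sidesteps the denominators appearing in the recursion of (i).

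For part (i), I would expand $-\log(1-z^n) = \sum_{k=1}^{\infty} z^{kn}/k$ as a formal power series and substitute into $\log Q(z) = -\sum_{n} b_n \log(1-z^n)$. Collecting the coefficient of $z^m$ via $kn = m$ (so $n \mid m$) gives
\[
q_m \;=\; \sum_{n \mid m}\frac{b_n}{m/n} \;=\; \sum_{d \mid m}\frac{b_{m/d}}{d}.
\]
Isolating the $d = 1$ term on the right yields $b_m = q_m - \sum_{d \mid m,\, d > 1} b_{m/d}/d$, which is the claimed recursion; specializing to $m = 1$ gives $b_1 = q_1$. Each sum is finite (only divisors of $m$ contribute), so there are no convergence issues, and this step is essentially bookkeeping.

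For part (ii), define the partial products $R_0(z) := Q(z)$ and $R_n(z) := Q(z)\prod_{k=1}^{n}(1-z^k)^{b_k}$ for $n \ge 1$. By strong induction on $n \ge 0$ I would show simultaneously that $R_n(z) \in 1 + z\Z[[z]]$ and that $b_{n+1} \in \Z$. Given the inductive hypothesis $b_1, \ldots, b_n \in \Z$ (vacuous when $n=0$), the binomial series shows $(1-z^k)^{b_k} \in 1 + z\Z[[z]]$ for each $k \le n$, and since $Q(z) \in 1 + z\Z[[z]]$ by hypothesis of (ii), the product $R_n(z)$ again lies in $1 + z\Z[[z]]$. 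Taking logarithms and using the defining identity,
\[
\log R_n(z) \;=\; \log Q(z) + \sum_{k=1}^{n} b_k \log(1 - z^k) \;=\; -\sum_{k \ge n+1} b_k \log(1 - z^k) \;=\; b_{n+1}\,z^{n+1} + O(z^{n+2}).
\]
Exponentiating gives $R_n(z) = 1 + b_{n+1}\,z^{n+1} + O(z^{n+2})$, so $b_{n+1}$ equals the $(n+1)$st coefficient of $R_n$, which is an integer by the previous sentence.

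The main obstacle is part (ii): the recursion from (i) contains the denominators $1/d$, so the integrality of $b_n$ is invisible at the level of that formula and would seem to require delicate arithmetic cancellation to extract directly. The trick of multiplicatively peeling off the factors $(1-z^k)^{b_k}$ one at a time replaces this hidden cancellation with the obvious observation that $\Z[[z]]$ is closed under multiplication, and that reading off the coefficient of the smallest nontrivial power in $R_n$ recovers $b_{n+1}$ on the nose. The real work, then, is in finding the right reformulation; once in hand, the rest of the argument is mechanical.
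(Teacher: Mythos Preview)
Your proof is correct. Part (i) is essentially the same computation as the paper's, just phrased directly as coefficient extraction rather than as inverting a lower-triangular change-of-basis matrix; the content is identical.

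Part (ii), however, takes a genuinely different route. The paper proves (ii) by invoking its Log Factorization Theorem (Theorem~\ref{thm:log}), which expresses $-\log(1-\sum x_i)$ as a nonnegative integer combination of $-\log(1-\mathbf{x}^{\mathbf{n}})$, proved via a M\"obius-inverted multinomial formula together with a combinatorial orbit-counting interpretation. It then specializes this identity twice (once with $x_i = q_i z^i$, once more to absorb the resulting integer coefficients $\mathbf{q}^{\mathbf{n}}$) to exhibit each $b_n$ as an integer combination of the $c_{\mathbf{n}}$. Your argument instead peels off the factors $(1-z^k)^{b_k}$ one at a time and reads $b_{n+1}$ directly as a Taylor coefficient of an element of $1+z\Z[[z]]$; this is exactly the iterated Factorization Method run as an induction, and it is both shorter and more elementary for this statement. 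The trade-off is that the paper's heavier machinery is built to be reused: Theorem~\ref{thm:log} and the cyclic tensor power construction are what drive the Frobenian analogue (Theorem~\ref{thm:explicit_singularities_Frob_coef}), where the coefficients live in a character ring and the ``leading coefficient is an integer'' shortcut is no longer available without additional representation-theoretic input. So your approach buys simplicity here, while the paper's approach buys a uniform framework that extends to the non-constant case.
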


Theorem \ref{thm:explicit_singularities_const_coef}(i) is useful for performing practical calculations. In applications, it is often only necessary to know finitely many terms of the sequence $b_n$, which provides enough information to determine the locations and orders of singularities in some right-half plane ${\rm Re}(s) > \delta > 0$. Theorem \ref{thm:explicit_singularities_const_coef}(ii) is useful for a more abstract reason - we know by Theorem \ref{thm:existence_const_coef} that if $Q(z)$ is meromorphic and $b_n\in \Z$ for each $n$, then $\mathcal{Q}(s)$ is meromorphic on ${\rm Re}(s)>0$. No branch cuts needed. For example, Theorem \ref{thm:explicit_singularities_const_coef}(ii) implies that the continuation in Estermann's Theorem \ref{thm:estermann} is a meromorphic continuation.

\begin{remark}
    Theorem \ref{thm:explicit_singularities_const_coef}(ii) comes from an explicit formula giving the values of $b_n$ in terms of the coefficients of $\log Q(z)$, as opposed to the recursive formula in Theorem \ref{thm:explicit_singularities_const_coef}(i). It is possible to extract a closed form expression for $b_n$ from the proof of Theorem \ref{thm:explicit_singularities_const_coef}, but we opt not to include this as the expression is more complicated than it is worth.
\end{remark}

We now use Theorem \ref{thm:explicit_singularities_const_coef} to quickly give the locations of several singularities in Example \ref{ex:intro}.

\begin{BOXexample}\label{ex:intro_more_poles}
    Consider the Euler product in Example \ref{ex:intro},
    \[
    \mathcal{Q}(s) = \prod_{p}(1 + 2p^{-s}).
    \]
    This corresponds to the entire function $Q(z) = 1 + 2z$, which is a polynomial with integer coefficients. Thus Theorem \ref{thm:existence_const_coef} implies that $\mathcal{Q}(s)$ has a meromorphic continuation to the right halfplane ${\rm Re}(s) > 0$.

    Let's explicitly give the meromorphic continuation of $\mathcal{Q}(s)$ to the region ${\rm Re}(s) > 1/6$. Noting that the radius of convergence for $\log Q(z)=\log(1+2z)$ is $r=1/2$, Theorem \ref{thm:existence_const_coef} implies that
    \[
    G(s) = \mathcal{Q}(s) \prod_{p\le 2^6} (1+2p^{-s})^{-1}\prod_{n=1}^6 \zeta(ns)^{-b_n}
    \]
    is absolutely convergent on this region. Thus
    \[
    \mathcal{Q}(s) = G(s)\prod_{p\le 2^6} (1+2p^{-s})\prod_{n=1}^6 \zeta(ns)^{b_n}
    \]
    in the region ${\rm Re}(s) > 1/6$. Given that $Q(z)\ne 0$ on the positive real line, it follows that $G(s)\ne 0$ and $Q(p^{-s})=1+2p^{-s}\ne 0$ for real $s\in (1/6,\infty]$. It now suffices to calculate $b_1,b_2,...,b_6$ to compute the poles.

    We compute the Taylor series
    \[
    \log Q(z) = \sum_{n=1}^\infty \frac{(-1)^{n-1}2^n}{n}z^n = 2z - 2z^2 + \frac{8}{3}z^3 - 4z^4 + \frac{32}{5}z^5 - \frac{32}{3}z^6 + \cdots,
    \]
    so that the recursive formula in Theorem \ref{thm:explicit_singularities_const_coef}(i) implies
    \begin{align*}
        b_1 &= 2\\
        b_2 &= -2 - \frac{1}{2}b_1 = -3\\
        b_3 &= \frac{8}{3} - \frac{1}{3}b_1 = 2\\
        b_4 &= -4 - \frac{1}{4}b_1 - \frac{1}{2}b_2 = -3\\
        b_5 &= \frac{32}{5} - \frac{1}{5}b_1 = 6\\
        b_6 &= \frac{-32}{3} - \frac{1}{6}b_1 - \frac{1}{3}b_2 - \frac{1}{2}b_3 = -11.
    \end{align*}
    Thus, we have proven that
    \[
    \mathcal{Q}(s) = \zeta(s)^2 \zeta(2s)^{-3}\zeta(3s)^{2}\zeta(4s)^{-3}\zeta(5s)^6\zeta(6s)^{-11} G(s)\prod_{p\le 2^6} (1+2p^{-s}),
    \]
    where $G(s)$ is holomorphic and nonzero on the region ${\rm Re}(s) > 1/6$. We now see that all singularities in this region are inherited from the zeta functions, although some may be (partially) cancelled out by the (non-real) zeros of $1+2p^{-s}$ for $p\le 2^6$.
\end{BOXexample}

A similar result holds for Frobenian Euler products, which implies that the continuation in Kurakawa's and Moroz's Theorem \ref{thm:km} is a meromorphic continuation.

\begin{theorem}\label{thm:explicit_singularities_Frob_coef}
    Let $Q_p(z)\in 1+zR(\Gal(E/\Q))[[z]]$ be a power series, where $E/\Q$ is a finite extension and $R(\Gal(E/\Q))$ is the ring of virtual characters $\Gal(E/\Q)\to \C$. Let $b_{n,\rho} = b_{n,\rho}(Q)$ be the unique sequence of complex numbers indexed by positive integers $n$ and irreducible representations $\rho:\Gal(E/\Q)\to {\rm GL}(V)$ such that
    \[
        \log Q_p(z) = -\sum_{n=1}^{\infty}\sum_{\rho} b_{n,\rho} \log\det(I-\rho(\Fr_p|V^{I_p})z^n).
    \]
    Then the following hold:
    \begin{enumerate}[(i)]
        \item Write $\log Q_p(z)=\sum q_{n,\rho}{\rm tr}\rho(\Fr_p|V^{I_p})z^n$ and define the recursive sequence $r_{n,\rho}$ by $r_{1,\rho} = q_{1,\rho}$ for each $\rho$ and
        \[
            r_{n,\rho} = q_{n,\rho} - \sum_{\substack{d\mid n\\d>1}}\sum_{\gamma\text{ irrep.}}\frac{\langle\rho^{\otimes d},\gamma\rangle}{d}r_{n/d,\gamma}
        \]
        for each $\rho$. Then the sequence $b_{n,\rho}$ is given by
        \[
            b_{n,\rho} = \sum_{d\mid n}\sum_{\gamma\text{ irrep.}} \langle \gamma^{\otimes_{\rm cyc} d},\rho\rangle r_{n/d,\gamma},
        \]
        where the cyclic tensor power $\gamma^{\otimes_{\rm cyc} d}$ is defined in Definition \ref{def:cyclic_tensor}.
        \item If $Q_p(z) \in 1 + zR(\Gal(E/\Q),\Z)[[z]]$ where $R(\Gal(E/\Q),\Z)\subseteq R(\Gal(E/\Q))$ is the set of integer linear combinations of the irreducible characters, then $b_n\in \Z$.
    \end{enumerate}
\end{theorem}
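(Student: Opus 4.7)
The plan is to extend the approach of Theorem \ref{thm:explicit_singularities_const_coef} to the setting of virtual characters, where part (i) becomes a linear-algebraic inversion of a triangular system on the representation-ring-valued formal power series ring $z R(\Gal(E/\Q))[[z]]$, and part (ii) is an integrality statement that will rest on the representation-theoretic factorization provided by Theorem \ref{thm:log_trchar}.

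For part (i), the starting point is the classical expansion
\[
    -\log\det(I - \rho(g) z^n) = \sum_{k \ge 1} \frac{{\rm tr}\rho(g^k)}{k} z^{nk}
\]
applied with $g = \Fr_p|V^{I_p}$. The cyclic tensor power is designed precisely so that the power sum ${\rm tr}\rho(g^k)$ is expressible as a linear combination of irreducible characters ${\rm tr}\gamma(g)$ via the decomposition of the virtual representation $\rho^{\otimes_{\rm cyc} k}$. Matching coefficients of ${\rm tr}\gamma(\Fr_p|V^{I_p}) z^m$ on both sides of the defining relation yields a linear system that is triangular under the partial order $(m,\gamma) < (m',\gamma')$ if and only if $m < m'$, with a unit diagonal contributed by the $k=1$ terms. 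Lemma \ref{lem:lowertri} then supplies a unique recursive solution. To present the answer in the claimed two-step form, I would introduce $r_{n,\rho}$ as an auxiliary sequence obtained by inverting the analogous triangular system in which the ordinary tensor power $\rho^{\otimes d}$ replaces the cyclic tensor power; this captures what one would get by expanding character powers as monomials $({\rm tr}\rho(g))^d$ rather than as power sums ${\rm tr}\rho(g^d)$. The passage from $r_{n,\rho}$ to $b_{n,\rho}$ is then the representation-theoretic Newton-type identity relating ordinary and cyclic tensor powers, whose decomposition coefficients are exactly the $\langle \gamma^{\otimes_{\rm cyc} d}, \rho\rangle$ appearing in the stated formula.

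For part (ii), integrality does not fall out of the recursion in (i) directly, because both the coefficients $q_{n,\rho}$ of $\log Q_p(z)$ and the intermediate $r_{n,\rho}$ carry $1/n$-type denominators inherited from the log expansion. The cleaner route is to invoke Theorem \ref{thm:log_trchar}, which factors each atomic expression $1 + {\rm tr}\rho(g) z$ as an explicit infinite product of characteristic polynomial factors $\det(I - \eta(g) z^n)^{c}$ with integer exponents and explicit virtual representations $\eta$. This supplies an integer-exponent zeta-product description for each atomic building block of the integer virtual character ring. A general $Q_p(z) \in 1 + z R(\Gal(E/\Q),\Z)[[z]]$ can be realized as a formal product of such atoms with integer exponents by standard power series manipulations, and then the additivity of $\log$ lets us sum the resulting integer $b_{n,\rho}$-contributions.

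The principal obstacle is establishing the Newton-type representation-theoretic identity that links the two recursions in (i). This identity promotes the classical relationship between power sums and elementary (or complete homogeneous) symmetric functions to the level of virtual representations of $\Gal(E/\Q)$, and verifying that $\otimes_{\rm cyc}$ is exactly the right construction to make this identity work is the combinatorial heart of the argument. A secondary point is that the identity ${\rm tr}\rho(g^k) = {\rm tr}(\rho^{\otimes_{\rm cyc} k}(g))$ must respect restriction to inertia-fixed subspaces $V^{I_p}$, which forces us to work at all but finitely many primes; the ramified Euler factors are then handled separately as in Theorem \ref{thm:existence_Frob_coef}.
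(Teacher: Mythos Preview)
Your two-step plan for part (i) --- decompose $\log Q_p(z)$ in the intermediate basis $\{-\log(1-{\rm tr}\rho(g)z^n)\}$ to obtain $r_{n,\rho}$ recursively via ordinary tensor powers, then convert to the basis $\{-\log\det(I-\gamma(g)z^n)\}$ using cyclic tensor powers --- is exactly the paper's route. But you have misidentified what the cyclic tensor power does, both in your opening paragraph and in your ``secondary point.'' The identity ${\rm tr}\rho(g^k) = {\rm tr}\rho^{\otimes_{\rm cyc} k}(g)$ is false: already for $k=2$ one has $\rho^{\otimes_{\rm cyc} 2}\cong \Lambda^2\rho$, and ${\rm tr}\Lambda^2\rho(g) = \tfrac{1}{2}\bigl(({\rm tr}\rho(g))^2 - {\rm tr}\rho(g^2)\bigr)$, not ${\rm tr}\rho(g^2)$. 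The cyclic tensor power does not linearize the power-sum characters ${\rm tr}\rho(g^k)$ appearing in $-\log\det(I-\rho(g)z)$; Theorem~\ref{thm:log_trchar} runs the other way, expressing $-\log(1-{\rm tr}\rho(g)z)$ as $\sum_n \bigl(-\log\det(I-\rho^{\otimes_{\rm cyc} n}(g)z^n)\bigr)$. If you attempted to match coefficients starting from the false identity you would derive the wrong system. Work instead from $-\log(1-{\rm tr}\rho(g)z^n) = \sum_{d\ge 1}\tfrac{1}{d}({\rm tr}\rho(g))^d z^{nd} = \sum_{d\ge 1}\tfrac{1}{d}{\rm tr}\rho^{\otimes d}(g)z^{nd}$ to obtain the $r$-recursion, and then apply Theorem~\ref{thm:log_trchar} verbatim for the $r\to b$ step.

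For part (ii), your plan amounts to showing $r_{n,\rho}\in\Z$, after which the formula in (i) gives $b_{n,\rho}\in\Z$ because $\langle\gamma^{\otimes_{\rm cyc} d},\rho\rangle$ is always an integer. The paper proves $r_{n,\rho}\in\Z$ by the same argument as Theorem~\ref{thm:explicit_singularities_const_coef}(ii), i.e.\ by specializing Theorem~\ref{thm:log}. Your phrase ``standard power series manipulations'' is hiding precisely this step: you need the Witt-type statement that any $Q_p\in 1+zR(\Gal(E/\Q),\Z)[[z]]$ factors as $\prod_{n,\rho}(1-{\rm tr}\rho(g)z^n)^{-r_{n,\rho}}$ with integer exponents. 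A direct triangular induction does work (peel off the lowest-degree coefficient with integer powers of $(1-{\rm tr}\rho(g)z^n)^{\pm 1}$, using that $({\rm tr}\rho)^k = {\rm tr}\rho^{\otimes k}$ keeps all coefficients in $R(\Gal(E/\Q),\Z)$), but this is the substantive content of (ii) and should be written out, not waved away.
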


In order to prove Theorem \ref{thm:explicit_singularities_const_coef} and Theorem \ref{thm:explicit_singularities_Frob_coef}, we will need new results in linear algebra, combinatorics, and representation theory.

\section{The Log Factorization Theorem}

Let $\N^\N$ be the set of sequence of natural numbers. For any $\mathbf{n}\in \N^\N$, define
\[
    |\mathbf{n}| := \sum_{i=0}^{\infty} n_i.
\]
We let $\N^{\N}_0$ be the subset of $\N^\N$ for which $|\mathbf{n}|<\infty$.

The following theorem formally takes place in the power series ring $\C[[x_0,x_1,x_2,...]]$ in countably many variables. The monomials in this space are parametrized by $\N^\N_0$ via the correspondence
\[
\mathbf{n} \mapsto \mathbf{x}^\mathbf{n} := \prod_{i=0}^\infty x_i^{n_i}.
\]
The subspace of power series without a constant term is a normed vector space with respect to the norm
\[
    ||f(\mathbf{x})|| = \frac{1}{{\rm ord}(f)},
\]
where ${\rm ord}(f)$ is the smallest integer $m$ for which there exists an $\mathbf{n}\in \N^\N_0$ with $|\mathbf{n}|=m$ and the coefficient of $\mathbf{x}^{\mathbf{n}}$ in $f$ is nonzero. This is defined so that the monomials $\{\mathbf{x}^{\mathbf{n}}\}$ form an Schauder basis for $\C[[\mathbf{x}]]$.

The vector space of power series in countably many variables is an extremely flexible object, and by specializing the variables in particular ways we can push properties of this vector space to any of the power series rings we have considered so far.

The following theorem does all the linear algebra and combinatorics necessary for Theorems \ref{thm:explicit_singularities_const_coef} and \ref{thm:explicit_singularities_Frob_coef}. It is most naturally presented in terms of multi-variable power series, which can be specialized as needed.

\begin{theorem}\label{thm:log}
For any $N\ge 1$, the identity
\[
-\log\left(1 - \sum_{i=0}^\infty x_i\right) = -\sum_{\mathbf{n}\in (\N)^{\N}_0-\{\mathbf{0}\}} c_\mathbf{n} \log( 1 - \mathbf{x}^\mathbf{n} )
\]
converges absolutely uniformly in the region cut out by
\[
\sum_{i=0}^\infty |x_i|<1-\epsilon
\]
for any choice of $\epsilon>0$, where the coefficients are given by the following equivalent formulas:
\begin{itemize}
    \item[(i)]
    \[
    c_\mathbf{n} = \frac{1}{|\mathbf{n}|}\sum_{d\mid \gcd(\mathbf{n})} \mu(d) {|\mathbf{n}/d|\choose \mathbf{n}/d},
    \]
    where $|\mathbf{n}|=n_0+n_1+\cdots$, $\gcd(\mathbf{n})=\gcd(n_0,n_1,\cdots)$, and $\mathbf{n}/d=(n_0/d,n_1/d,n_2/d,...)$; and
    \item[(ii)] $c_\mathbf{n}$ equals the number of orbits of length $|\mathbf{n}|$ in
    \[
    \mathcal{P}_{\mathbf{n}}(\Z/|\mathbf{n}|\Z)=\{(A_1,A_2,...) : A_i\subseteq \Z/|\mathbf{n}|\Z,\ |A_i|=n_i,\ A_i\cap A_j = \emptyset\text{ when }i\ne j\}
    \]
    under the action by $\Z/|\mathbf{n}|\Z$ defined by
    \[
    1.(A_1,A_2,...)\mapsto (\{a+1:a\in A_1\},\{a+1:a\in A_2\},...).
    \]
\end{itemize}
In particular, (ii) implies $c_\mathbf{n}\in\N$.
\end{theorem}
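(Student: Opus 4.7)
My plan is to verify the identity as a formal power series equality, derive formula (i) by Möbius inversion, establish formula (ii) by a Burnside-type orbit count, and then use the non-negativity of $c_\mathbf{n}$ granted by (ii) to upgrade to absolute uniform convergence on the stated region.

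First I would equate coefficients of $\mathbf{x}^\mathbf{m}$ on the two sides. On the left, $-\log(1-\sum x_i)=\sum_{m\ge 1}\frac{1}{m}(\sum x_i)^m$ combined with the multinomial theorem gives the coefficient $\frac{1}{|\mathbf{m}|}\binom{|\mathbf{m}|}{\mathbf{m}}$. On the right, expanding $-c_\mathbf{n}\log(1-\mathbf{x}^\mathbf{n})=\sum_{k\ge 1}\frac{c_\mathbf{n}}{k}\mathbf{x}^{k\mathbf{n}}$ and collecting powers contributes $\sum_{k\mid\gcd(\mathbf{m})} c_{\mathbf{m}/k}/k$. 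Multiplying through by $|\mathbf{m}|$ produces the system
\[
\binom{|\mathbf{m}|}{\mathbf{m}}=\sum_{k\mid \gcd(\mathbf{m})}|\mathbf{m}/k|\,c_{\mathbf{m}/k},
\]
which is triangular with respect to the divisor ordering on $\gcd(\mathbf{m})$ and hence determines $(c_\mathbf{n})$ uniquely. Reparametrizing with $g=\gcd(\mathbf{m})$ and $e=g/k$ and applying Möbius inversion over the divisors of $g$ extracts formula (i).

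For (ii) I would apply Burnside to the natural $\Z/N\Z$-action on $\mathcal{P}_\mathbf{n}(\Z/N\Z)$, where $N=|\mathbf{n}|$. For each divisor $e$ of $N$, the shift by $N/e$ generates the subgroup of order $e$, whose orbits on $\Z/N\Z$ all have length $e$; a configuration is fixed by this shift iff each $A_i$ is a union of such orbits, which forces $e\mid\gcd(\mathbf{n})$ and gives exactly $\binom{|\mathbf{n}/e|}{\mathbf{n}/e}$ fixed configurations. A Möbius inversion on the divisor lattice, isolating elements with trivial stabilizer (equivalently those in orbits of full length $N$), produces the free orbit count $\frac{1}{N}\sum_{d\mid \gcd(\mathbf{n})}\mu(d)\binom{|\mathbf{n}/d|}{\mathbf{n}/d}$, agreeing with (i). In particular $c_\mathbf{n}\in\N$.

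The convergence claim then follows from this non-negativity. For non-negative reals $x_i$ with $\sum x_i<1$, every term in the double expansion is non-negative, so Tonelli legitimizes the rearrangement and both sides converge to $-\log(1-\sum x_i)$. For complex $\mathbf{x}$ with $\sum|x_i|<1-\epsilon$, the elementary bound $|\log(1-y)|\le -\log(1-|y|)$ for $|y|<1$, combined with $c_\mathbf{n}\ge 0$, gives
\[
\sum_\mathbf{n} c_\mathbf{n}\,|\log(1-\mathbf{x}^\mathbf{n})|\le \sum_\mathbf{n} c_\mathbf{n}\bigl(-\log(1-|\mathbf{x}^\mathbf{n}|)\bigr)=-\log\Bigl(1-\sum|x_i|\Bigr)\le -\log\epsilon,
\]
so the series converges absolutely and uniformly on the stated region. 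The main step demanding care will be bookkeeping: cleanly translating between the indexings $(\mathbf{n},k)\leftrightarrow \mathbf{m}=k\mathbf{n}$ in the formal step, and between shift-step and stabilizer-size in the Burnside count. Each individual piece is a routine Möbius-inversion manipulation once the notation is pinned down.
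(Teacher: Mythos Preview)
Your proposal is correct and follows essentially the same route as the paper for parts (i) and (ii): both expand the left side by the multinomial theorem, both derive (i) by M\"obius inversion of the triangular system relating the two bases, and both establish (ii) by counting fixed points of the shift action and M\"obius-inverting on the divisor lattice to isolate the free orbits.

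The only noteworthy divergence is in the convergence step. The paper uses the crude bound $c_\mathbf{n}\le\binom{|\mathbf{n}|}{\mathbf{n}}$ together with a Taylor-remainder estimate $|\log(1-\mathbf{x}^\mathbf{n})|=O(|\mathbf{x}^\mathbf{n}|)$ and then sums to a geometric series. You instead leverage the non-negativity $c_\mathbf{n}\ge 0$ directly: combining it with $|\log(1-y)|\le -\log(1-|y|)$ dominates the complex sum by the already-established real-variable identity, giving the clean uniform bound $-\log\epsilon$. Your argument is slightly slicker and makes the role of (ii) in the convergence proof more transparent, but both are short and routine.
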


This theorem is proven by essentially decomposing the vector on the left hand side in terms of the basis $\{-\log(1-\mathbf{x}^{\mathbf{n}})\}_{\mathbf{n}\in\N^\N_0-\{\mathbf{0}\}}$. This computation is done explicitly via the proof of Lemma \ref{lem:lowertri} in order to produce to description of the coefficients in Theorem \ref{thm:log}(i).

However, the combinatorial description in Theorem \ref{thm:log}(ii) is where the magic really happens. By proving that the coefficients $c_{\mathbf{n}}$ are all natural numbers, we can specialize this theorem to any number of vector spaces of power series (including those in Theorem \ref{thm:explicit_singularities_const_coef} and Theorem \ref{thm:explicit_singularities_Frob_coef}) in order to describe basis decompositions in such vector spaces explicitly in terms of this sequence. This gives us the necessarily leverage to determine when other basis decompositions of power series with respect to a ``log basis" have integer coefficients.

\begin{BOXexample}
    Consider the function $-\log(1-x-y)$. This function has two variables, so it suffices to consider $x_1=x$, $x_2=y$, and $x_i = 0$ for each $i\ge 3$. In particular, $\log(1-\mathbf{x}^{\mathbf{n}}) \ne 0$ if and only if $n_i=0$ for $i\ge 3$.

    We can then write
    \[
    -\log(1-x-y) = -\sum_{\substack{i,j\in\N\\(i,j)\ne (0,0)}} c_{(i,j)}\log(1-x^iy^j).
    \]

    The constants are given explicitly by alternating sums of binomial coefficients
    \[
    c_{(i,j)} = \frac{1}{i+j} \sum_{d\mid \gcd(i,j)} \mu(d){i/d+j/d \choose i/d}.
    \]
    Theorem \ref{thm:log} implies that $c_{(i,j)}$ are all nonnegative integers. A noticeably nice subcase is that whenever $\gcd(i,j)=1$, then
    \[
    c_{(i,j)} = \frac{1}{i+j}{i+j\choose i}.
    \]
    Also nice to know is that
    \[
    c_{(i,0)} = \begin{cases}
        1 & i=1\\
        0 & i > 1.
    \end{cases}
    \]
    We can truncate the summation by putting all terms of sufficiently large degree into an error term. For example:
    \begin{align*}
    \log(1-x-y) =& c_{(1,0)}\log(1-x) + c_{(0,1)} \log(1-y) + c_{(2,0)}\log(1-x^2)\\
    &+ c_{(1,1)}\log(1-xy) + c_{(0,2)}\log(1-y^2) + O(\text{degree }3)\\
    \log(1-x-y)=&\log(1-x) + \log(1-y) + \log(1-xy) + O(\text{degree }3).\\
    \end{align*}
\end{BOXexample}

The right-hand side appears like the log of a product of Euler factors of $L$-functions, meanwhile the left-hand appears like the log of a power series Euler factor. This is what allows us to connect Euler products and zeta products together. The uniform convergence is necessary to produce right halfplanes of convergence. The fact that $c_{\mathbf{n}}\in \N$ is particularly useful, and will allow us to quantify when branch cuts are necessary in the meromorphic continuations.

\begin{proof}
We expand the lefthand side formally via the Taylor series for $-\log(1-T)$ in order to express it in terms of the standard basis $\{\mathbf{x}^\mathbf{n}\}$ of monomials, under which we find that
\begin{align*}
    -\log\left(1 - \sum_{i=0}^\infty x_i\right) &= \sum_{k=1}^{\infty}\frac{1}{k}\left(\sum_{i=0}^\infty x_i\right)^{k}\\
    &=\sum_{k=1}^{\infty}\frac{1}{k}\sum_{\substack{\mathbf{n}\in \N^\N\\|\mathbf{n}|=k}}{k\choose \mathbf{n}}\mathbf{x}^\mathbf{n}\\
    &=\sum_{\mathbf{n}\in (\N)^{\N}_0-\{\mathbf{0}\}} \frac{1}{|\mathbf{n}|}{|\mathbf{n}|\choose \mathbf{n}} \mathbf{x}^\mathbf{n}.
\end{align*}
We formally change bases to $\{-\log(1 - x^\mathbf{n})\}$ by following along the proof of Lemma \ref{lem:lowertri}. The Taylor series decomposition of the basis vectors is given by
\[
-\log(1 - \mathbf{x}^\mathbf{n}) = \sum_{k=1}^{\infty} \frac{1}{k} \mathbf{x}^{k\mathbf{n}},
\]
where $k\mathbf{n}=(kn_0,kn_1,kn_2,...)$. Thus, the linear transformation sending $\mathbf{x}^\mathbf{n}$ to $-\log(1-\mathbf{x}^\mathbf{n})$ has a matrix given by $A=[a_{\mathbf{n},\mathbf{m}}]$ for
\[
a_{\mathbf{n},\mathbf{m}} = \begin{cases}
\frac{1}{d} & \mathbf{n} = d\mathbf{m}\\
0 & \text{else}.
\end{cases}
\]
The matrix $A$ being lower triangular in the sense of Lemma \ref{lem:lowertri}, but we will need to compute the inverse matrix explicitely. We do so by solving the equation $AB = I$. Write $B=[b_{\mathbf{n},\mathbf{m}}]$ and $I = [\delta_{\mathbf{n},\mathbf{m}}]$ the identity matrix. This gives a system of equations
\[
\delta_{\mathbf{n},\mathbf{m}} = \sum_{\mathbf{j}\in \N^{\N}_0-\{\mathbf{0}\}} a_{\mathbf{n},\mathbf{j}}b_{\mathbf{j},\mathbf{m}} = \sum_{d\mid \mathbf{n}} \frac{1}{d} b_{\frac{\mathbf{n}}{d},\mathbf{m}},
\]
where we write $d\mid \mathbf{n}$ to mean $d\mid n_i$ for each $i$ (or in other words, there exists a $\mathbf{j}\in \N^{\N}_0$ with $\mathbf{n} = d\mathbf{j}$). This is a Dirichlet convolution between the function $d\mapsto 1/d$ and $d\mapsto b_{\frac{\mathbf{n}}{\gcd(\mathbf{n})}*d,\mathbf{m}}$ evaluated at $\gcd(\mathbf{n})$, where we define $\gcd(\mathbf{n}) = \gcd(n_1,n_2,...)$ and note that $d\mid \mathbf{n}$ iff $d\mid \gcd(\mathbf{n})$. The Dirichlet inverse of $d\mapsto 1/d$ is $d\mapsto \mu(d)/d$ because the function is completely multiplicative, so by convoluting on both sides by this function we find that
\begin{align*}
b_{\mathbf{n},\mathbf{m}} = b_{\frac{\mathbf{n}}{\gcd(\mathbf{n})}*\gcd(\mathbf{n}),\mathbf{m}} &= \sum_{d\mid\gcd(\mathbf{n})}\frac{\mu(d)}{d}\delta_{\frac{\mathbf{n}}{\gcd(\mathbf{n})}*\frac{\gcd(\mathbf{n})}{d},\mathbf{m}}\\
&=\begin{cases}
\frac{\mu(d)}{d} & \mathbf{n}= d\mathbf{m}\\
0 & \text{else}.
\end{cases}
\end{align*}
We have successfully inverted the linear transformation, where we remark that the formula is particularly nice falling out of a M\"obius inversion.

We now apply the $\{x^\mathbf{n}\}\to\{-\log(1-\mathbf{x}^\mathbf{n})\}$ change of basis matrix to the $\{\mathbf{x}^\mathbf{n}\}$-coefficients of $-\log(1-\sum x_i)$ to get
\begin{align*}
c_{\mathbf{n}} &= \sum_{\mathbf{m}\in \N^N-\{\mathbf{0}\}} b_{\mathbf{n},\mathbf{m}}\frac{1}{|\mathbf{m}|}{|\mathbf{m}|\choose \mathbf{m}}\\
&= \sum_{d\mid \gcd(\mathbf{n})} \frac{\mu(d)}{d} \frac{1}{|\mathbf{n}/d|}{|\mathbf{n}/d|\choose \mathbf{n}/d}\\
&=\frac{1}{|\mathbf{n}|}\sum_{d\mid \gcd(\mathbf{n})} \mu(d){|\mathbf{n}/d|\choose \mathbf{n}/d}.
\end{align*}
This formally proves (i).

Part (ii) is simple to check given the combinatorial construction. Consider that $\mathcal{P}_\mathbf{n}(\Z/|\mathbf{n}|\Z)$ has ${|\mathbf{n}|\choose \mathbf{n}}$ elements by definition of the multinomial coefficient. Letting $f(d)$ denote the number of orbits of length $d$ (so necessarily $f(d)\ne 0$ implies $d\mid |\mathbf{n}|$), we note that $A\in \mathcal{P}_\mathbf{n}(\Z/|\mathbf{n}|\Z)$ has an orbit of length $d$ iff $|\Stab_{\Z/|\mathbf{n}|\Z}(A)|=|\mathbf{n}|/d$, or equivalently $\Stab_{\Z/|\mathbf{n}|\Z}(A) = d\Z/|\mathbf{n}|\Z$ as the orders of subgroups of $\Z/|\mathbf{n}|\Z$ are unique. Thus
\begin{align*}
|\mathcal{P}_\mathbf{n}(\Z/|\mathbf{n}|\Z)| &=|\operatorname{Fix}_{\mathcal{P}_\mathbf{n}(\Z/|\mathbf{n}|\Z)}(|\mathbf{n}|)|\\
&=\sum_{d\mid |\mathbf{n}|}\#\{A\in \mathcal{P}_\mathbf{n}(\Z/|\mathbf{n}|\Z) : \Stab_{\Z/|\mathbf{n}|\Z}(A)=d\Z/|\mathbf{n}|\Z\}\\
&=\sum_{d\mid |n|} d\cdot f(d)
\end{align*}
By M\"obius inversion, we find that
\[
|\mathbf{n}|\cdot f(|\mathbf{n}|) = \sum_{d\mid |\mathbf{n}|} \mu(d) |\operatorname{Fix}_{\mathcal{P}_{\mathbf{n}}(\Z/|\mathbf{n}|\Z)}(|\mathbf{n}|/d)|.
\]
We now completely describe the sets $\operatorname{Fix}_{\mathcal{P}_{\mathbf{n}}(\Z/|\mathbf{n}|\Z)}(|\mathbf{n}|/d)$. An element $A=(A_1,...,A_N)\in \mathcal{P}_{\mathbf{n}}(\Z/|\mathbf{n}|\Z)$ is fixed by $|\mathbf{n}|/d$ if and only if whenever $a\in A_i$, then so too is the arithmetic progression $a, a+\frac{|\mathbf{n}|}{d}, a+ 2\frac{|\mathbf{n}|}{d}, ...$. This implies two things for us:
\begin{itemize}
    \item First, it implies that each $A_i$ is a disjoint union of arithmetic progressions of length $\frac{|\mathbf{n}|}{|\mathbf{n}|/d}=d$, so in particular $d\mid |A_i|=n_i$. Thus, we have proven that $|\operatorname{Fix}_{\mathcal{P}_{\mathbf{n}}(\Z/|\mathbf{n}|\Z)}(|\mathbf{n}|/d)|=0$ if $d\nmid \gcd(\mathbf{n})$,
    \item Second, it implies that $A$ must be in the image of the embedding
    \[
    \pi^*:\mathcal{P}_{\mathbf{n}/d}\left(\Z/\frac{|\mathbf{n}|}{d}\Z\right) \to \mathcal{P}_\mathbf{n}(\Z/|\mathbf{n}|\Z)
    \]
    given by $(B_1,B_2...)\mapsto (\pi^{-1}(B_1),\pi^{-1}(B_2),...)$ for $\pi:\Z/|\mathbf{n}|\Z\to\Z/\frac{|\mathbf{n}|}{d}\Z$ the quotient map.
\end{itemize}
It is trivial to see that the converse holds as well, proving that if $d\mid \gcd(\mathbf{n})$ then
\begin{align*}
|\operatorname{Fix}_{\mathcal{P}_{\mathbf{n}}(\Z/|\mathbf{n}|\Z)}(|\mathbf{n}|/d)| &= |\im(\pi^*)|\\
&= \left\lvert\mathcal{P}_{\mathbf{n}/d}\left(\Z/\frac{|\mathbf{n}|}{d}\Z\right)\right\rvert\\
&= {|\mathbf{n}/d|\choose \mathbf{n}/d},
\end{align*}
and is $0$ otherwise. Thus we have proven that
\[
|\mathbf{n}|\cdot f(|\mathbf{n}|) = \sum_{d\mid \gcd(\mathbf{n})} \mu(d){|\mathbf{n}/d|\choose \mathbf{n}/d} = |\mathbf{n}|c_\mathbf{n},
\]
concluding the proof of (ii).

Lastly, we must prove uniform convergence. We can do this using some crude upper bounds. We certainly have by (ii)
\[
c_\mathbf{n} \le |\mathcal{P}_\mathbf{n}(\Z/|\mathbf{n}|\Z)| = {|\mathbf{n}|\choose \mathbf{n}}.
\]
So we consider the upper bound
\begin{align*}
\sum_{\mathbf{n}\in \N^{\N}_0-\{\mathbf{0}\}} {|\mathbf{n}|\choose \mathbf{n}} |-\log (1 - \mathbf{x}^\mathbf{n}))|.
\end{align*}
When $|x_j|\le \sum |x_i|<1-\epsilon$ this implies $|\mathbf{x}^\mathbf{n}|<(1-\epsilon)^{|\mathbf{n}|}\le 1-\epsilon < 1$, so that the remainder theorem for Taylor series implies that $-\log(1-\mathbf{x}^\mathbf{n}) = \mathbf{x}^\mathbf{n} + O(M\mathbf{x}^{2\mathbf{n}})$ where
\[
M = \sup_{|t|<1-\epsilon} (1-t)^{-2} \le \epsilon^2.
\]
Thus, $|\log(1-\mathbf{x}^\mathbf{n})| = O(|\mathbf{x}^\mathbf{n}|)$. We use this to produce an upper bound
\begin{align*}
\ll \sum_{\mathbf{n}\in \N^{\N}_0-\{\mathbf{0}\}} {|\mathbf{n}|\choose \mathbf{n}} |\mathbf{x}^\mathbf{n}| &=\sum_{k=1}^{\infty} \left(\sum_{i=0}^\infty|x_i|\right)^k.
\end{align*}
This is a geometric series, which converges uniformly on the region $\sum |x_i|<1-\epsilon$ as required.
\end{proof}

\section{Cyclic Tensor Powers}\label{sec:cyclictensor}

It is necessary for Theorem \ref{thm:explicit_singularities_Frob_coef} to consider the following construction:

\begin{definition}\label{def:cyclic_tensor}
Let $\rho:G\to GL(V)$ be a finite dimensional representation of $G$ over the $\C$-vector space $V$. For any positive integer $n$, fix a primitive $n^{\rm th}$ root of unity $\zeta$. Define the \textbf{cyclic tensor power} of $\rho$ by $n$ to be $\rho^{\otimes_{\text{cyc}} n}:G\to GL(V^{\otimes_{\text{cyc}} n})$, where
\[
V^{\otimes_{\text{cyc}} n}:= V^{\otimes n}/\langle (\zeta^m - m).v\rangle,
\]
where $\Z/n\Z$ acts on $V^{\otimes n}$ by $m.(a_1\otimes \cdots \otimes a_{n})=a_{1+m}\otimes \cdots \otimes a_{n+m}$ under the identification $a_i=a_j$ whenever $i\equiv j\mod n$.
\end{definition}

One readily verifies that $V^{\otimes_{\text{cyc}} n}$ is a quotient $G$-module of the tensor power $V^{\otimes n}$ as the $G$ action commutes with scalar multiples and permuting the entries of pure tensors.

While it is unnecessary for our purposes, we remark that the isomorphism class of $V^{\otimes_{\text{cyc}} n}$ is independent of the choice of primitive root $\zeta$. Indeed, any other choice of primitive root of unity is given by $\zeta^m$ for some $m$ coprime to $n$ and the isomorphism $\{m\}:V^{\otimes n}\to V^{\otimes n}$ defined by $v_1\otimes \cdots v_n \mapsto  v_{1\cdot m} \otimes \cdots \otimes v_{n\cdot m}$ induced by the permutation action of $\Z/n\Z$ sends
\begin{align*}
(\zeta^d - [d]).v &\to (\zeta^d \{m\} - [d\cdot m]\{m\}).v\\
&= ((\zeta^{m^{-1}})^{dm} - [dm]).\{m\}v.
\end{align*}
This implies $\{m\}$ sends $V^{\otimes_{\text{cyc}} n}$ to the corresponding quotient with $\zeta^{m^{-1}}$ as the chosen primitive root. By applying the same argument to $\{m^{-1}\}$ (which exists modulo $n$), we conclude that this is an isomorphism and that the choice of primitive root does not matter.

\begin{BOXexample}
    Let $\rho:G\to {\rm GL}(V)$ be a representation. Let's examine the cyclic tensor powers by small integers.
    \begin{enumerate}[(1)]
        \item $\rho^{\otimes_{\rm cyc} 1} = \rho$. Indeed, the only primitive first root of unity is $1$ itself, while $\Z/1\Z$ acts trivially on anything. Thus
        \[
            V^{\otimes 1}/\langle (1^1 - 1).v\rangle = V/\langle 0\rangle \cong V.
        \]
        \item $\rho^{\otimes_{\rm cyc} 2}=\Lambda_2\rho$ is the representation associated to the exterior algebra $\Lambda_2(V)$. This is because
        \[
            V^{\otimes 2}/\langle ((-1)^1 - 1).(v\otimes w)\rangle = V^{\otimes 2}/\langle -v\otimes w - w\otimes v\rangle = \Lambda_2 V.
        \]
        \item $\rho^{\otimes_{\rm cyc} 3}$ is \emph{not} isomorphic to $\Lambda_3\rho$. The construction becomes more subtle from $3^{\rm rd}$ powers onwards. Inside of $V^{\otimes_{\rm cyc} 3}$, the classes of $u\otimes v\otimes w$ and $v\otimes u\otimes w$ need not be related, unlike in the exterior algebra. However, there is a cyclic relationship between the following classes:
        \[
            [u\otimes v\otimes w] = \zeta_3 [w\otimes u\otimes v].
        \]
        The appearance of a complex coefficient in this relationship is also distinct from the exterior algebra.
    \end{enumerate}
\end{BOXexample}

The cyclic tensor power can be used to identify the character ${\rm tr}(\rho(g))$ using only characteristic polynomials.

\begin{theorem}\label{thm:log_trchar}
Let $\rho:G\to GL(V)$ be a finite dimensional representation of $G$ over the $\C$-vector space $V$. Then
\[
-\log( 1 - {\rm tr}(\rho(g))z ) = -\sum_{n=1}^{\infty}\log(\det(I-\rho^{\otimes_{\text{cyc}} n}(g)z^n)),
\]
where the sum converges absolutely in the region $|z|<1$ and uniformly on the region $|z|<1-\epsilon$.
\end{theorem}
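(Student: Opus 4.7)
The plan is to reduce Theorem~\ref{thm:log_trchar} to Theorem~\ref{thm:log} by a spectral computation of $\rho^{\otimes_{\rm cyc} n}(g)$. By passing to the Zariski-dense set of semisimple operators (both sides are analytic in the matrix entries of $\rho(g)$), I may assume $\rho(g)$ is diagonalizable with eigenvalues $\lambda_1, \ldots, \lambda_d$ on an eigenbasis $e_1, \ldots, e_d$ of $V$, so that ${\rm tr}\,\rho(g) = \sum_{i=1}^d \lambda_i$. Specializing Theorem~\ref{thm:log} by setting $x_i = \lambda_i z$ for $i \le d$ and $x_i = 0$ for $i > d$, then regrouping by $n = |\mathbf{n}|$, gives
\begin{align*}
-\log\!\left(1 - {\rm tr}\,\rho(g)\, z\right) \;=\; -\sum_{n=1}^{\infty}\; \sum_{\substack{\mathbf{n} \in \N^d \\ |\mathbf{n}| = n}} c_{\mathbf{n}}\, \log\!\left(1 - z^n \textstyle\prod_{i} \lambda_i^{n_i}\right),
\end{align*}
with absolute and uniform convergence inherited from Theorem~\ref{thm:log}. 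It then suffices to show the inner sum equals $\log\det(I - \rho^{\otimes_{\rm cyc} n}(g) z^n)$, which by multiplicativity of $\det$ reduces to the spectral claim: \emph{the eigenvalues of $\rho^{\otimes_{\rm cyc} n}(g)$ are $\prod_i \lambda_i^{n_i}$, each with multiplicity $c_{\mathbf{n}}$, as $\mathbf{n}$ ranges over compositions of $n$.}

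To prove the spectral claim, I first identify $V^{\otimes_{\rm cyc} n}$ concretely inside $V^{\otimes n}$. Let $\tau$ denote the cyclic shift and decompose $V^{\otimes n} = \bigoplus_{\omega^n = 1} W_\omega$ into $\tau$-eigenspaces. For $v \in W_\omega$ the defining relation $(\zeta^m - \tau^m) v = (\zeta^m - \omega^m) v$ is nonzero for some $m$ unless $\omega = \zeta$, so $V^{\otimes_{\rm cyc} n}$ is canonically the $\zeta$-eigenspace $W_\zeta$, which is $\rho(g)^{\otimes n}$-stable because the $G$-action commutes with $\tau$. Next, I pass to the product basis $\{e_f : f : \Z/n\Z \to \{1,\ldots,d\}\}$ of $V^{\otimes n}$, where $e_f := e_{f(0)} \otimes \cdots \otimes e_{f(n-1)}$. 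Then $\rho(g)^{\otimes n} e_f = \bigl(\prod_i \lambda_i^{|f^{-1}(i)|}\bigr) e_f$ depends only on the composition $\mathbf{n}_f := (|f^{-1}(1)|, \ldots, |f^{-1}(d)|)$, while $\tau$ acts by precomposition with the standard cyclic action of $\Z/n\Z$ on $\Z/n\Z$. This is precisely the $\Z/n\Z$-action on $\mathcal{P}_{\mathbf{n}}(\Z/n\Z)$ from Theorem~\ref{thm:log}(ii).

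The final step is to pick out the $\zeta$-eigenvectors. On a single $\tau$-orbit $O$ of length $\ell \mid n$, the span of $\{e_f : f \in O\}$ is the regular representation of $\Z/\ell\Z$ under $\tau$, whose $\tau$-eigenvalues are the $\ell$-th roots of unity each with multiplicity one. Thus the $\zeta$-eigenspace within this subspace is trivial unless $\ell = n$, in which case it is one-dimensional and spanned by $v_O := \sum_{j=0}^{n-1} \zeta^{-j} e_{\tau^j f}$ for any $f \in O$; a direct check gives $\tau v_O = \zeta v_O$. Because $\rho(g)^{\otimes n}$ acts by the common scalar $\prod_i \lambda_i^{n_i}$ on $O$, the vector $v_O$ is automatically a $\rho(g)^{\otimes n}$-eigenvector with that same eigenvalue. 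Summing over orbits and invoking the count in Theorem~\ref{thm:log}(ii), the multiplicity of $\prod_i \lambda_i^{n_i}$ in the spectrum of $\rho^{\otimes_{\rm cyc} n}(g)$ is the number of length-$n$ orbits of composition $\mathbf{n}$, which equals $c_{\mathbf{n}}$. The main obstacle will be the orbit/eigenvalue bookkeeping at this step, especially verifying that the $\zeta$-eigenvector along a single orbit is simultaneously a $\rho(g)^{\otimes n}$-eigenvector; once that is in hand, the identity follows directly from Theorem~\ref{thm:log}.
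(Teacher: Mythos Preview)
Your proof is correct and takes a genuinely different route from the paper. The paper argues by a direct trace computation: it expands both sides via the identity $-\log\det(I - \varphi(g)z) = \sum_k {\rm tr}(\varphi(g^k)) z^k / k$, reduces to proving
\[
{\rm tr}(\rho^{\otimes_{\rm cyc} d}(g)) = \frac{1}{d}\sum_{q\mid d} \mu(d/q)\, {\rm tr}(\rho(g^{d/q}))^q,
\]
and establishes this formula by a hands-on analysis of representatives and stabilizers in the quotient defining $V^{\otimes_{\rm cyc} d}$, never invoking Theorem~\ref{thm:log}. Your approach instead identifies $V^{\otimes_{\rm cyc} n}$ with the $\zeta$-eigenspace of the cyclic shift on $V^{\otimes n}$, reads off its $\rho(g)^{\otimes n}$-spectrum as $\{\prod_i \lambda_i^{n_i}\}$ with multiplicities $c_{\mathbf{n}}$ via the orbit count of Theorem~\ref{thm:log}(ii), and then applies Theorem~\ref{thm:log} directly. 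This is cleaner and makes the link to the combinatorics of Theorem~\ref{thm:log} transparent, at the cost of a Zariski-density reduction to diagonalizable $\rho(g)$; the paper's argument, by contrast, works uniformly for all $\rho(g)$ and produces the explicit group-theoretic trace formula above, which is of independent interest. One small caveat: the convergence you inherit from Theorem~\ref{thm:log} is on $\sum_i |\lambda_i z| < 1$, i.e.\ $|z| < 1/\sum_i |\lambda_i|$, which is generally smaller than the stated $|z|<1$; but the theorem's convergence region is already imprecise for general $\rho(g)$ (the left side has a singularity at $z = 1/{\rm tr}\,\rho(g)$), and only the formal identity is needed downstream.
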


While $1-{\rm tr}(\rho(g))z$ is not itself a characteristic polynomial, this result shows that it equals the convergent product of infinitely many characteristic polynomials. This will be immensely useful for getting a handle on the sequence $b_{n,\rho}(Q)$ appearing in Theorem \ref{thm:explicit_singularities_Frob_coef}.

\begin{proof}[Proof of Theorem \ref{thm:log_trchar}]
We utilize a standard fact for characteristic polynomials of representations, that
\[
-\log(\det(I-\varphi(g)z)) = \sum_{k=1}^{\infty} \frac{{\rm tr}(\varphi(g^k))}{k} z^k
\]
for $|z|<1$.

Thus, it follows that
\begin{align*}
-\sum_{n=1}^{\infty}\log(\det(I-\rho^{\otimes_{\text{cyc}} n}(g)z^n)) &= \sum_{n=1}^{\infty}\sum_{k=1}^{\infty} \frac{{\rm tr}(\rho^{\otimes_{\text{cyc}} n}(g^k))}{k}z^{nk}\\
&=\sum_{k=1}^{\infty}\left(\sum_{d\mid k} \frac{{\rm tr}(\rho^{\otimes_{\text{cyc}} d}(g^{k/d}))}{k/d}\right)z^k.
\end{align*}
In order to prove the equality formally, it suffices to prove that the $k^{\rm th}$ coefficient is equal to ${\rm tr}(\rho(g))^{k}/k$.

Fix a basis for $V$, and therefore a basis of pure tensors of any tensor power labelled $\mathcal{B}_{\otimes d}$. Choose a representative of each nonzero equivalence class of pure tensor powers to form a basis for $V^{\otimes_{\text{cyc}} d}$ which we label $\mathcal{B}_{\otimes_{\text{cyc}} d}$. To compute the trace of $\rho^{\otimes_{\text{cyc}} d}(g)$, it suffices to know the coefficient of the basis element $[a]$ in the vector $\rho^{\otimes_{\text{cyc}} d}(g)[a]$, which we denote $\langle \rho^{\otimes_{\text{cyc}} d}(g)[a],[a]\rangle$, implying
\begin{align*}
{\rm tr}(\rho^{\otimes_{\text{cyc}}d}(g)) &= \sum_{\substack{[a]\in\mathcal{B}_{\otimes_{\text{cyc}} d}}} \langle\rho^{\otimes_{\text{cyc}} d}(g)[a],[a]\rangle.
\end{align*}
We note that $\rho^{\otimes_{\text{cyc}} d}(g)[a] = [\rho^{\otimes d}(g) a]$ and that $[m.a]=\zeta^m[a]$ so that the representatives of $[a]$ are all of the form $\zeta^{-m}(m.a)$. Thus
\begin{align*}
\langle \rho^{\otimes_{\text{cyc}} d}(g)[a],[a]\rangle &= \langle [\rho^{\otimes d}(g)a],[a]\rangle\\
&= \frac{1}{d}\sum_{m=0}^{d-1} \langle \rho^{\otimes d}(g)a,\zeta^{-m}(m.a)\rangle\\
&= \frac{1}{d}\sum_{m=0}^{d-1} \zeta^{-m}\langle \rho^{\otimes d}(g)a,m.a\rangle
\end{align*}
We seek to expand the summation to a sum over $\mathcal{B}_{\otimes d}$, essentially undoing the quotient defining $V^{\otimes_{\text{cyc}}d}$. A pure tensor $a=a_1\otimes \cdots \otimes a_n$ has $[a]\ne 0$ if and only if $\Stab_{\Z/d\Z}(a)=1$, which implies the orbit under the $\Z/d\Z$ action is necessarily of size $d$. We remark on two things:
\begin{enumerate}
    \item{If $a$ is a representative of the basis element $[a]$ of $V^{\otimes_{\text{cyc}} d}$ with $\Stab_{\Z/d\Z}(a) = 1$, then
    \[
    \frac{\langle \rho^{\otimes_{\text{cyc}} d}(g)[m.a],[m.a]\rangle}{\langle [m.a],[m.a]\rangle} = \frac{\langle \zeta^m\rho^{\otimes_{\text{cyc}} d}(g)[a],\zeta^m[a]\rangle}{\langle \zeta^m[a],\zeta^m[a]\rangle} = \langle \rho^{\otimes_{\text{cyc}} d}(g)[a],[a]\rangle
    \]
    is independent of the choice of representative under the $\Z/d\Z$ action. (Note that $[m.a]$ is not a basis element, so it is important that we include it in the denominator).}
    \item{If $a$ is a pure tensor with $\Stab_{\Z/d\Z}(a) = \ell\Z/d\Z$ for $\ell\ne d$, then
    \begin{align*}
    \sum_{m=0}^{d-1} \zeta^{-m}\langle \rho^{\otimes d}(g)a,m.a\rangle &= \sum_{m=0}^{\ell-1}\langle \rho^{\otimes d}(g)a,m.a\rangle \left(\sum_{j=0}^{d/\ell - 1} \zeta^{-(m+j\ell)}\right)\\
    &=\sum_{m=0}^{\ell-1}\zeta^{-m}\langle \rho^{\otimes d}(g)a,m.a\rangle \left(\sum_{j=0}^{d/\ell - 1} (\zeta^{-\ell})^j\right)\\
    &= 0,
    \end{align*}
    as we note that $\zeta^{-\ell}$ is a primitive $(d/\ell)^{\rm th}$ root of unity and $d/\ell\ne 1$.}
\end{enumerate}

Noting that the trace is the same for all bases, we partition $\mathcal{B}_{\otimes d}$ into $d$ lifts of bases for $V^{\otimes_{\text{cyc}} d}$ (which can be gotten from the orbits of a lift of $\mathcal{B}_{\otimes_{\text{cyc}}d}$) together with the collection of pure tensors $a$ with $\Stab_{\Z/d\Z}(a)\ne 1$. This produces the sum
\begin{align*}
{\rm tr}(\rho^{\otimes_{\rm cyc} d}(g))&=\frac{1}{d}\sum_{a\in\mathcal{B}_{\otimes d}}\sum_{m=0}^{d-1} \zeta^{-m} \langle\rho^{\otimes d}(g)a,m.a\rangle\\
&=\frac{1}{d}\sum_{m=0}^{d-1}\zeta^{-m} \sum_{a\in\mathcal{B}_{\otimes d}} \langle\rho^{\otimes d}(g)a,m.a\rangle.
\end{align*}
We next claim that the inner summation depends only on $\gcd(d,m)$. Indeed, for any $n$ coprime to $d$, consider the isomorphism $\{n\}:V^{\otimes d} \to V^{\otimes d}$ sending
\[
a_1\otimes \cdots \otimes a_d\mapsto a_n\otimes \cdots \otimes a_{nd}.
\]
This map certainly preserves pure tensors and commutes with $\rho^{\otimes d}(g)$ (as $\rho^{\otimes d}(g)$ acts coordinate wise). We also note that
\[
\langle \rho^{\otimes d}(g)\{n^{-1}\}a,m.\{n^{-1}\}a\rangle = \langle\{n^{-1}\} \rho^{\otimes d}(g)a,\{n^{-1}\}((nm).a)\rangle = \langle\rho^{\otimes d}(g)a,((nm).a)\rangle,
\]
where the last equality follows from $\{n^{-1}\}$ being an isomorphism which restricts to a bijection on the basis of pure tensors. Thus, we have proven
\[
\sum_{\substack{a\in\mathcal{B}_{\otimes d}\\\Stab_{\Z/d\Z}(a)=1}} \langle\rho^{\otimes d}(g)a,m.a\rangle = \sum_{\substack{a\in\mathcal{B}_{\otimes d}\\\Stab_{\Z/d\Z}(a)=1}} \langle\rho^{\otimes d}(g)a,((nm).a)\rangle
\]
whenever $\gcd(n,d)=1$, showing that this sum depends only on $\gcd(d,m)$. Partitioning as such, we find that
\begin{align*}
{\rm tr}(\rho^{\otimes_{\rm cyc} d}(g)) = \frac{1}{d}\sum_{q\mid d}\sum_{a\in\mathcal{B}_{\otimes d}} \langle\rho^{\otimes d}(g)a,q.a\rangle\left(\sum_{\substack{0\le m < d\\\gcd(m,d)=q}}\zeta^{-m} \right).
\end{align*}
The sum of roots of unity can be evaluated using the M\"obius function trick
\begin{align*}
\sum_{\substack{0\le m < d\\\gcd(m,d)=q}}\zeta^{-m} &= \sum_{\substack{0\le m < d\\q\mid m\\\gcd(m/q,d/q)=1}} \zeta^{-m}\\
&=\sum_{\substack{0\le m< d\\ q\mid m}} \zeta^{-m} \left(\sum_{\substack{\ell\mid d/q\\\ell\mid m/q}} \mu(\ell)\right)\\
&= \sum_{\ell\mid d/q} \mu(\ell) \sum_{r=0}^{d/q\ell - 1} \zeta^{-rq\ell}\\
&= \mu(d/q).
\end{align*}
Therefore we conclude that
\[
{\rm tr}(\rho^{\otimes_{\rm cyc} d}(g)) = \frac{1}{d}\sum_{q\mid d}\mu(d/q)\sum_{a\in\mathcal{B}_{\otimes d}} \langle\rho^{\otimes d}(g)a,q.a\rangle.
\]
We choose an identification $(\mathcal{B}_{\otimes q})^{d/q} \leftrightarrow \mathcal{B}_{\otimes d}$ via
\[
b_i = a_{1+iq} \otimes a_{2+iq} \otimes \cdots \otimes a_{q+iq}
\]
for each $i=1,...,d/q$. The action by $q$ on $a$ sends $b_i$ to $b_{i+1}$, taking $i$ modulo $d/q$, so we can write
\[
\langle\rho^{\otimes d}(g)a,q.a\rangle = \prod_{i=1}^{d/q} \langle \rho^{\otimes q}(g) b_i, b_{i+1}\rangle.
\]
We make use of a general fact about powers of matrices: If $A$ is an invertible matrix on a finite dimensional vector space $V$ with basis $\mathcal{B}$, then
\[
\sum_{a\in \mathcal{B}}\langle A^n a, v\rangle = \sum_{a_1,...,a_n\in \mathcal{B}} \langle A a_1,a_2\rangle\langle A a_2,a_3\rangle\cdots \langle A a_n,v\rangle.
\]
Thus,
\begin{align*}
\sum_{a\in\mathcal{B}_{\otimes d}} \langle\rho^{\otimes d}(g)a,q.a\rangle &= \sum_{b_1,...,b_{q}\in\mathcal{B}_{\otimes d/q}} \prod_{i=1}^{d/q}\langle \rho^{\otimes q}(g)b_i,b_{i+1}\rangle\\
&= \sum_{b\in\mathcal{B}_{\otimes q}}\langle \rho^{\otimes q}(g)^{d/q} b,b\rangle\\
&= {\rm tr}(\rho(g^{d/q}))^{q}
\end{align*}
and so
\[
{\rm tr}(\rho^{\otimes_{\text{cyc}} d}(g)) = \frac{1}{d}\sum_{q\mid d}\mu(d/q){\rm tr}(\rho(g^{d/q}))^q.
\]
We conclude the proof by evaluating
\begin{align*}
-\sum_{n=1}^{\infty}\log(\det(I-\rho^{\otimes_{\text{cyc}} n}(g)z^n)) &=\sum_{k=1}^{\infty}\left(\sum_{d\mid k} \frac{{\rm tr}(\rho^{\otimes_{\text{cyc}} d}(g^{k/d}))}{k/d}\right)z^k\\
&=\sum_{k=1}^{\infty}\frac{z^k}{k}\left(\sum_{d\mid k} \sum_{q\mid d}\mu(d/q) {\rm tr}(\rho((g^{k/d})^{d/q}))^q\right)\\
&=\sum_{k=1}^{\infty}\frac{z^k}{k}\left(\sum_{tyz = k}\mu(y) {\rm tr}(\rho((g^{k/ty})^{ty/t}))^t\right)\\
&=\sum_{k=1}^{\infty}\frac{z^k}{k}\left(\sum_{tyz = k}\mu(y) {\rm tr}(\rho((g^{k/t})))^t\right)\\
&=\sum_{k=1}^{\infty}\frac{z^k}{k} \sum_{t\mid k}{\rm tr}(\rho((g^{k/t})))^t\left(\sum_{y\mid k/t} \mu(y)\right)\\
&=\sum_{k=1}^{\infty}\frac{z^k}{k} {\rm tr}(\rho(g^{k/k}))^k\\
&= -\log(1 - {\rm tr}\rho(g)z).
\end{align*}
\end{proof}

\section{Proofs of Explicit Results}

We are now prepared to proof Theorem \ref{thm:explicit_singularities_const_coef} and Theorem \ref{thm:explicit_singularities_Frob_coef}.

\begin{proof}[Proof of Theorem \ref{thm:explicit_singularities_const_coef}]
    Part (i) follows from the fact that the linear transformation $z\C[[z]]\to z\C[[z]]$ sending $z^n\mapsto -\log(1-z^n)$ is an invertible lower triangular matrix, as was used in the proof of Proposition \ref{prop:basis_const}. The recursive formula then follows from the inductive construction in the proof of Lemma \ref{lem:lowertri}.

    Part (ii) will follow from specializing Theorem \ref{thm:log}. Firstly, we specialize to $x_i = q_iz^i$ so that
    \begin{align*}
        \log Q(z) &= -\sum_{\mathbf{n}\in \N^\N_0-\{\mathbf{0}\}} c_\mathbf{n} \log\left(1 - \prod (q_i z^i)^{n_i}\right)\\
        &=-\sum_{k=1}^\infty\sum_{\substack{\mathbf{n}\in \N^\N_0-\{\mathbf{0}\}\\\sum in_i = k}} c_\mathbf{n} \log\left(1 - \mathbf{q}^{\mathbf{n}} z^k\right).
    \end{align*}
    Next, we take advantage of the fact that $\mathbf{q}^\mathbf{n}$ is an integer. If $\mathbf{q}^\mathbf{n}=0$ this term vanishes. If $\mathbf{q}^\mathbf{n}>0$, then we can specialize Theorem \ref{thm:log} to $x_i = 1$ for $i=1,2,...,\mathbf{q}^\mathbf{n}$ and $x_i = 0$ for $i>\mathbf{q}^\mathbf{n}$. This implies
    \begin{align*}
        \log\left(1 - \mathbf{q}^{\mathbf{n}} z^k\right) &= -\sum_{\substack{\mathbf{m}\in \N^\N_0-\{\mathbf{0}\}\\i>\mathbf{q}^{\mathbf{n}}\Rightarrow m_i =0}} c_\mathbf{m}\log(1 - z^{k|\mathbf{m}|}).
    \end{align*}
    If $\mathbf{q}^\mathbf{n}<0$, then we likewise specialize Theorem \ref{thm:log} to $x_i = -1$ for $i=1,2,...,|\mathbf{q}^\mathbf{n}|$ and $x_i = 0$ for $i>\mathbf{q}^\mathbf{n}$. This implies
    \begin{align*}
        \log\left(1 - \mathbf{q}^{\mathbf{n}} z^k\right) &= -\sum_{\substack{\mathbf{m}\in \N^\N_0-\{\mathbf{0}\}\\i>|\mathbf{q}^{\mathbf{n}}|\Rightarrow m_i =0}} c_\mathbf{m}\log(1 - (-1)^{|\mathbf{m}|}z^{k|\mathbf{m}|})\\
        &=-\sum_{\substack{\mathbf{m}\in \N^\N_0-\{\mathbf{0}\}\\i>|\mathbf{q}^{\mathbf{n}}|\Rightarrow m_i =0}} c_\mathbf{m}\log(1 - z^{k|\mathbf{m}|})+
        \sum_{\substack{\mathbf{m}\in \N^\N_0-\{\mathbf{0}\}\\i>|\mathbf{q}^{\mathbf{n}}|\Rightarrow m_i =0\\2\nmid |\mathbf{m}|}} c_\mathbf{m}\log(1 - z^{2k|\mathbf{m}|}).
    \end{align*}
    By composing these linear combinations and combining like terms, we find that the coefficients $b_n$ of $-\log(1-z^n)$ in this decomposition are comprised of sums and differences of $c_{\mathbf{n}}c_{\mathbf{m}}$ for various $\mathbf{n},\mathbf{m}\in \N^\N_0$. Theorem \ref{thm:log}(ii) implies that these are integers, so sums and differences of these values are necessarily also integers. Thus $b_n\in \Z$, concluding the proof.
\end{proof}

The proof of Theorem \ref{thm:explicit_singularities_Frob_coef} is similar, although we need to use a slightly different linear transformation.

\begin{proof}[Proof of Theorem \ref{thm:explicit_singularities_Frob_coef}]
    Unlike in Proposition \ref{prop:basis_Frob}, we do not directly consider the linear transformation sending ${\rm tr}\rho(\Fr_p) z^n\mapsto -\log\det(I-\rho(\Fr_p)z^n)$. Instead, we break this transformation into two pieces.

    First, consider the linear transformation $zR(\Gal(E/\Q))[[z]]\to zR(\Gal(E/\Q))[[x]]$ sending ${\rm tr}\rho(\Fr_p) z^n\mapsto -\log(1-{\rm tr}\rho(\Fr_p)z^n)$. Let $T=[t_{(n,\rho),(m,\gamma)}]$ be the matrix of this transformation with respect to the standard basis. By expanding
    \begin{align*}
        -\log(1-{\rm tr}\rho(\Fr_p)z^n) &= \sum_{d=1}^{\infty} \frac{1}{n}{\rm tr}\rho(\Fr_p)^dz^{dn}\\
        &=\sum_{d=1}^{\infty} \frac{1}{n}{\rm tr}\rho^{\otimes d}(\Fr_p)z^{dn}\\
        &=\sum_{d=1}^{\infty}\sum_{\gamma\text{ irrep.}}\frac{\langle \rho^{\otimes d},\gamma\rangle}{n}{\rm tr}\gamma(\Fr_p)z^{dn},
    \end{align*}
    we can determine that the coefficients are given by
    \[
        t_{(n,\rho),(m,\gamma)} = \begin{cases}
            \displaystyle \sum_{\gamma\text{ irrep.}}\frac{\langle \rho^{\otimes m/n},\gamma\rangle}{n} & n\mid m\\
            0 & \text{else.}
        \end{cases}
    \]
    Thus, $T$ is a lower triangular matrix with nonzero diagonal. Following the construction solving a linear system in the proof of Lemma \ref{lem:lowertri}, the sequence $r_{n,\rho}$ determined by the basis decomposition
    \[
        \log Q_p(z) = -\sum_{n,\rho}r_{n,\rho}\log(1-{\rm tr}\rho(\Fr_p)z^n)
    \]
    is given precisely by the recurrence relation in part (i).
    
    By Theorem \ref{thm:log_trchar}, we know that
    \begin{align*}
        \log(1-{\rm tr}\rho(\Fr_p)z^n) &= \sum_{d=1}^{\infty} \log\det(I-\rho^{\otimes_{\rm cyc} d}(\Fr_p)z^{dn})\\
        &=\sum_{d=1}^{\infty} \sum_{\gamma\text{ irrep.}}\langle\rho^{\otimes_{\rm cyc} d},\gamma\rangle\log\det(I-\gamma(\Fr_p)z^{dn}).
    \end{align*}
    Plugging this in above and combining like terms concludes the proof of part (i).

    Part (ii) is proven very similarly to Theorem \ref{thm:explicit_singularities_const_coef}(ii). When $\log Q_p(z) \in zR(\Gal(E/\Q),\Z)[[z]]$, the sequence $r_{n,\rho}$ are determined to be integers by exactly the same argument as for Theorem \ref{thm:explicit_singularities_const_coef}(ii), together with the fact that $\langle \rho^{\otimes d},\gamma\rangle$ is an integer (given by the number of times $\gamma$ appears as an irreducible component of $\rho^{\otimes d}$). Similarly, $\langle \gamma^{\otimes_{\rm cyc} d},\rho\rangle$ is always an integer so that part (i) implies $b_{n,\rho}\in \Z$.
\end{proof}

\bibliographystyle{alpha}

\bibliography{main_MeromEulerProd.bbl}

\end{document}